\numberwithin{equation}{section}
\crefname{theorem}{Theorem}{Theorems}
\crefname{definition}{Theorem}{Definitions}
\crefname{proposition}{Theorem}{Propositions}
\crefname{alphatheorem}{Theorem}{Theorems}
\newtheorem{alphatheorem}{Theorem}
\newtheorem{alphaproposition}[alphatheorem]{Proposition}
\newtheorem{theorem}{Theorem}[section]
\newtheorem{lemma}[theorem]{Lemma}
\newtheorem{proposition}[theorem]{Proposition}
\newtheorem{definition}[theorem]{Definition}
\newtheorem{corollary}[theorem]{Corollary}
\newtheorem{example}[theorem]{Example}
\newtheorem{remark}[theorem]{Remark}
\theoremstyle{remark}
\newtheorem{rem}[theorem]{Remark}
\newcommand{\clr}{rgb:black,2;blue,2;red,0}
\tikzset{anchorbase/.style={baseline={([yshift=-0.5ex]current bounding box.center)}}}
\tikzset{wipe/.style={white,line width=4pt}}
\DeclareFontFamily{OT1}{pzc}{}
\DeclareFontShape{OT1}{pzc}{m}{it}{ <-> s*[1.2] pzcmi7t }{}
\DeclareMathAlphabet{\mathpzc}{OT1}{pzc}{m}{it}
\newcommand{\qW}{\mathpzc{QWeb}}
\newcommand{\W}{\mathpzc{Web}}
 \newcommand{\QAW}{\mathpzc{QWeb}^\bullet}
 \newcommand{\Qwb}{\mathfrak{q}\text{-}\mathbf{Web}}
\newcommand{\QAWC}{\mathpzc{QWeb}^{\bullet\prime}}
\newcommand{\Mat}{\text{Mat}}
\newcommand{\PMat}{\text{SParMat}}
\newcommand{\Par}{\text{Par}}
\newcommand{\SPar}{\text{SPar}}
\newcommand\glv{\mathfrak{gl}(V)}
\newcommand\C{\mathbb{C}}
\newcommand\Z{\mathbb{Z}}
\newcommand\N{\mathbb{N}}
\newcommand\kk{\Bbbk}
\newcommand\la{\lambda}
\newcommand{\Hom}{{\rm Hom}}
\newcommand{\End}{{\rm End}}
\newcommand{\rot}{\rotatebox[origin=c]{180}}
\newcommand{\arxiv}[1]{\href{http://arxiv.org/abs/#1}{\tt arXiv:\nolinkurl{#1}}}
\def\t{\mathfrak t}
\newcommand{\bdot}{ node[circle, draw, fill=\clr, thick, inner sep=0pt, minimum width=3.5pt]{}}
\newcommand{\wdot}{ node[circle, draw, color=\clr, fill=white, thick, inner sep=0pt, minimum width=3.5pt]{}}
\newcommand{\xdot}{
	\begin{tikzpicture}[baseline = 3pt, scale=0.5, color=\clr]
		
		\draw[-,thick] (0,0) to[out=up, in=down] (0,1);
		\draw(0,0.5) \bdot;
	\end{tikzpicture}
}
\newcommand{\wxdot}{
	\begin{tikzpicture}[baseline = 3pt, scale=0.5, color=\clr]
		\draw[-,thick] (0,0) to[out=up, in=down] (0,1);
		\draw(0,0.5) \wdot;
	\end{tikzpicture}
}
\newcommand{\wlambdadota}{\begin{tikzpicture}[baseline = 3pt, scale=0.5, color=\clr]
		\draw[-,line width=2pt] (0,0) to[out=up, in=down] (0,1.4);
		\draw(0,0.6) \bdot; 
		\draw (0.7,0.6) node {$\scriptstyle \omega_{\lambda}$};
		\node at (0,-.3) {$\scriptstyle a$};
\end{tikzpicture} }
\newcommand{\merge}
{\begin{tikzpicture}[baseline = -.5mm, scale=0.7,color=\clr]
		\draw[-,line width=1pt] (0.28,-.3) to (0.08,0.04);
		\draw[-,line width=1pt] (-0.12,-.3) to (0.08,0.04);
		\draw[-,line width=2pt] (0.08,.4) to (0.08,0);
		\node at (-0.22,-.4) {$\scriptstyle a$};
		\node at (0.35,-.4) {$\scriptstyle b$};\node at (0,.55){$\scriptstyle a+b$};\end{tikzpicture} }
\newcommand{\splits}
{\begin{tikzpicture}[baseline = -.5mm, scale=0.7,color=\clr]
		\draw[-,line width=2pt] (0.08,-.3) to (0.08,0.04);
		\draw[-,line width=1pt] (0.28,.4) to (0.08,0);
		\draw[-,line width=1pt] (-0.12,.4) to (0.08,0);
		\node at (-0.22,.5) {$\scriptstyle a$};
		\node at (0.36,.6) {$\scriptstyle b$};
		\node at (0.1,-.45){$\scriptstyle a+b$};
\end{tikzpicture}}
\newcommand{\dotgen}
{\begin{tikzpicture}[baseline = 3pt, scale=0.5, color=\clr]
		\draw[-,thick] (0,0) to[out=up, in=down] (0,1.4);
		\draw(0,0.6) \bdot;
		\node at (0,-.3) {$\scriptstyle 1$};
\end{tikzpicture}}
\newcommand{\odota}
{\begin{tikzpicture}[baseline = 3pt, scale=0.5, color=\clr]
		\draw[-,thick] (0,0) to[out=up, in=down] (0,1.4);
		\draw(0,0.6) \wdot;
		\node at (0,-.3) {$\scriptstyle a$};
\end{tikzpicture}}
\newcommand{\crossing}{\begin{tikzpicture}[baseline=-.5mm, scale=0.7,color=\clr]
		\draw[-,line width=1.2pt] (-0.3,-.3) to (.3,.4);
		\draw[-,line width=1.2pt] (0.3,-.3) to (-.3,.4);
		\node at (0.3,-.5) {$\scriptstyle b$};
		\node at (-0.3,-.45) {$\scriptstyle a$};
		\node at (0.3,.55) {$\scriptstyle a$};
		\node at (-0.3,.6) {$\scriptstyle b$};
\end{tikzpicture}}
\newcommand{\wkdota}{\begin{tikzpicture}[baseline = 3pt, scale=0.5, color=\clr]
     \draw[-,line width=1.2pt] (0,-.4) to[out=up, in=down] (0,1.2);
		\draw(0,0.4) \bdot; 
		\draw (0.65,0.4) node {$\scriptstyle \omega_r$};
		\node at (0,-.8) {$\scriptstyle a$};
\end{tikzpicture} }
\newcommand{\bdota}{
\begin{tikzpicture}[baseline = 3pt, scale=0.5, color=\clr]
	\draw[-,line width=1.2pt] (0,0) to[out=up, in=down] (0,1);
	\draw(0,0.6) \bdot; 
	\node at (0,-.3) {$\scriptstyle a$};
\end{tikzpicture}
}
\newcommand{\dotgenC}{
\begin{tikzpicture}[baseline = 3pt, scale=0.5, color=\clr]
	\draw[-,line width=1.2pt] (0,0) to[out=up, in=down] (0,1);
	\draw(0,0.6) \bdot; 
	\node at (0,-.3) {$\scriptstyle 1$};
\end{tikzpicture}
}
\newcommand{\wdota}{
\begin{tikzpicture}[baseline = 3pt, scale=0.5, color=\clr]
	\draw[-,line width=1.2pt] (0,0) to[out=up, in=down] (0,1);
	\draw(0,0.6) \wdot; 
	\node at (0,-.3) {$\scriptstyle a$};
\end{tikzpicture}
}
\newcommand{\genoba}{\begin{tikzpicture}[baseline = 8pt, scale=0.5, color=\clr]
\draw[-,line width=1pt] (0,0.5)to[out=up,in=down](0,1.5);
\draw (0,0.2) node{$\scriptstyle a$};
\end{tikzpicture} \;}
\newcommand{\rcircdot}{\begin{tikzpicture}[baseline = 3pt, scale=0.5, color=\clr]
		\draw[-,line width=1.5pt] (0,-.4) to[out=up, in=down] (0,1.2);
		\draw(0,0.4) \bdot; 
		\draw (0.65,0.4) node {$\scriptstyle \omega^\circ_r$};
		\node at (0,-.8) {$\scriptstyle a$};
\end{tikzpicture}}
\newcommand{\wxdota}{\begin{tikzpicture}[baseline = 3pt, scale=0.5, color=\clr]
		\draw[-,line width=1.5pt] (0,0) to[out=up, in=down] (0,1.4);
		\draw(0,0.6) \bdot; 
		\draw (0.7,0.6) node {$\scriptstyle \omega_1$};
		\node at (0,-.3) {$\scriptstyle a$};
\end{tikzpicture}}
\newcommand{\p}[1]{|#1|}
\begin{document}
	\setlength{\baselineskip}{17pt}
	\title{Affine web of type q}
	
	\author{Linliang Song}
	\address{School of Mathematical Science, Tongji University, Shanghai, 200092, China}\email{llsong@tongji.edu.cn}
	
	\author{Xingyu Wang}
	\address{School of Mathematical Science, Tongji University, Shanghai}\email{2410288@tongji.edu.cn}

	\keywords{Affine web category of type $Q$, affine Sergeev superalgebras.}
	
	\begin{abstract}
   We introduce a new diagrammatic $\kk$-linear monoidal supercategory $\QAW$, the affine web supercategory of type $Q$, where $\kk$ is a commutative ring of characteristic not two. This category is the affinization of the web category of type $Q$, originally introduced by Brown and Kujawa. It serves as the type $Q$ analog of the affine web category introduced by Davidson, Kujawa, Muth and Zhu,  and independently by Wang and one of the authors. We obtain diagrammatic integral bases for the Hom-spaces of this category.
We show that $\QAW$ provides a combinatorial model for a natural monoidal supercategory of endosuperfunctors for Lie superalgebras of type $Q$. 
     \end{abstract}
	
	\maketitle
	\setcounter{tocdepth}{1}
	\tableofcontents
    
    \section{Introduction}
\subsection{Webs of type $Q$ }
The web category $\W$ (of type $A$) was first introduced by Cautis, Kamnitzer, and Morrison \cite{CKM} to provide a diagrammatic description of the category generated by the fundamental representations of $U_q(\mathfrak{sl}_n)$. A related (polynomial) version of the $\mathfrak{gl}$-web category, the infinite version, is discussed in \cite{BEEO}. Subsequently, the web category has been generalized to other types, including types $B$, $C$, and $D$ \cite{BERT21,BT24,BWu23,ST19}, as well as types $P$ and $Q$ \cite{DKM21},  \cite{BKu21}, among others.

Brown and Kujawa \cite{BKu21} introduced the category $\mathfrak{q}$-$\textbf{Web}$ of webs of type $Q$ over $\mathbb{C}$ and used it to describe the full subcategory of supermodules for the Lie superalgebra of type $Q$, consisting of tensor products of supersymmetric tensor powers of the natural supermodules.  This category is a $\C$-linear strict monoidal supercategory with generating objects denoted $\genoba$ for $a \in \mathbb{Z}_{\geq 1}$, and generating morphisms $\merge$, $\splits$, and $\wdota$, subject to the relations \eqref{webassoc}, \eqref{equ:r=0}, \eqref{doublewdot}, and \eqref{wdot-move-split and merge over C}--\eqref{wdot-rung-relation_2}.

The category $\mathfrak{q}$-$\textbf{Web}$ can be slightly generalized to the category $\qW$ (see Definition \ref{Def:webQ}), defined over any commutative ring $\kk$ of characteristic not two. The generators of $\qW$ are $\merge$, $\splits$, $\wdota$, and $\crossing$, subject to slightly different relations \eqref{webassoc}--\eqref{equ:r=0} and \eqref{wdotsmovecrossing}--\eqref{equ:onewdot}. In fact, $\mathfrak{q}$-$\textbf{Web}$ is isomorphic to $\qW$ when $\kk = \mathbb{C}$. See Theorem \ref{isom-qWeb-overc}.

\subsection{Affine web   of type $Q$}
The affinization of (diagrammatic) categories or algebras has proven to be significant in the study of their representations and related categorifications. A key feature of this process is that the new generator (often represented as a black dot) introduced in the affinization provides the Jucys-Murphy elements, which in turn lead to the required categorical actions in the categorification. For example, the polynomial generators of the (degenerate) affine Hecke algebra correspond to the Jucys-Murphy elements of the (cyclotomic) Hecke algebras.

The affinization of web categories for type $A$ was introduced by Davidson, Kujawa, Muth, and Zhu \cite{DKMZ}, and independently by Wang and one of the authors \cite{SW1}.
Notably, the affine web category \cite{SW1} plays a fundamental role as a building block for the affine Schur category, as established in \cite{SW2}. Furthermore, these categories, along with their cyclotomic quotients, provide the first diagrammatic presentations of several important algebras \cite{SW1,SW2}, including affine Schur algebras, higher-level affine Schur algebras, and the  degenerate analog of Dipper-James-Mathas' cyclotomic Schur algebras.

The goal of this paper is to extend this process of affinization to the web category of type $Q$.
We introduce a new diagrammatic $\kk$-linear monoidal supercategory $\QAW$, which we refer to as the affine web category of type $Q$. It is obtained from $\qW$ by adjoining additional (even) morphisms (referred to as black dots) $\bdota$, for $a \in \mathbb{Z}_{\geq 1}$, subject to the additional relations \eqref{dotmovecrossing}--\eqref{wdotbdot}.

\subsection{Main results}
When $\kk$ has characteristic zero (e.g., $\mathbb{C}$), the presentation of $\QAW$ simplifies significantly. In fact, it can be obtained by combining the generators and relations of $\qW$ with those of the affine Sergeev superalgebras. This observation was first made for the affine web category of type $A$ in \cite{SW1} and may extend to the affinization of web categories of other types.

We denote the $\mathbb{C}$-linear monoidal supercategory with these simplified relations by $\QAWC$ (see Definition \ref{def-QAW-overC}).

\begin{alphatheorem}  [\cref{equiv-over-C}]
  \label{thm:AffineC}
  The $\C$-linear monoidal supercategories $\QAWC$  and $\QAW$ are isomorphic. 
 \end{alphatheorem}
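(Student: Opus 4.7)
The plan is to exhibit mutually inverse isomorphisms of $\C$-linear monoidal supercategories, using the fact that both $\QAW$ and $\QAWC$ are built on the same generating data (objects $\genoba$ for $a \geq 1$ and morphisms $\merge$, $\splits$, $\wdota$, $\crossing$, $\bdota$). The identity-on-generators assignment is the only natural candidate in each direction, so the content is entirely a comparison of relations.

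First I would construct $F\colon \QAWC \to \QAW$ by sending each generator to the generator of the same name. Well-definedness amounts to verifying every defining relation of $\QAWC$ inside $\QAW$. The relations inherited from $\qW$ are immediate, since $\qW$ sits inside $\QAW$ by construction. What remains are the affine Sergeev type relations on thin strands (dot/crossing interchange, square of the odd dot, and so on); these are recovered by specializing the general dot-move relations \eqref{dotmovecrossing}--\eqref{wdotbdot} of $\QAW$ to strand thickness $a = b = 1$.

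Next I would construct $G\colon \QAW \to \QAWC$ analogously on generators. Here well-definedness is the substantive direction: one must derive the full list of thick-strand dot-move relations of $\QAW$ from the presentation of $\QAWC$, which only postulates the thin-strand (Sergeev) relations together with the web relations. The strategy is to induct on strand thickness. Using the $\qW$ relations one writes a thick strand, with a black or white dot on it, as a combination of diagrams in which only thin dots appear, modulo idempotent-like divided-power symmetrizers built from iterated splits and merges. The thin-strand relations then handle the inductive step, and the web relations recombine the thin pieces back into their thick form.

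The main obstacle is precisely this thickness induction in the construction of $G$. It requires careful bookkeeping with merges and splits, and depends crucially on the characteristic-zero hypothesis, because the symmetrizers above are normalized by factorials that must be invertible. A parallel argument was carried out in type $A$ in \cite{SW1}, and I would follow that blueprint, handling each of the relations \eqref{dotmovecrossing}--\eqref{wdotbdot} separately. Once both functors are known to be well defined, they are mutually inverse on the nose, since they agree with the identity on the common generating set.
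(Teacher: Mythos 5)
Your proposal is correct and follows essentially the same route as the paper: the easy functor $\QAWC\to\QAW$ is the identity on generators (the relations \eqref{dotmovecrossingC}--\eqref{wdotbdot over C} being the $a=b=1$ cases of \eqref{dotmovecrossing} and \eqref{wdotbdot}), while the inverse sends the thick dot $\bdota$ to the $\tfrac{1}{a!}$-normalized balloon of thin dotted strands as in \eqref{intergralballon over C} and verifies \eqref{dotmovecrossing}, \eqref{bdotmove}, \eqref{wdotbdot} by induction on thickness, exactly as in the paper's Lemmas \ref{lem:2.12re}--\ref{lem:2.13re}. The only imprecision is the phrase "common generating set": $\bdota$ for $a\ge 2$ is not a generator of $\QAWC$, so checking that the composite fixes it uses relation \eqref{intergralballon} in $\QAW$ rather than being automatic, but this is immediate from your own description of the thick dot as a factorial-normalized balloon.
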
 
 
The functor connecting $\Qwb$ (over $\mathbb{C}$) and the representations of the Lie superalgebra $\mathfrak{q}_n$ of type $Q$ (cf. \cite{BKu21}) can be naturally lifted to a functor from $\QAW$ to the monoidal supercategory of endosuperfunctors for $\mathfrak{q}_n$. In particular, the objects $a$, for $a \in \mathbb{Z}{\geq 1}$, correspond to the functor $-\otimes S^a(V)$, where $S^a(V)$ is the supersymmetric power of the natural $\mathfrak{q}_n$-supermodule $V$. This also shows that $\QAW$ naturally arises in the study of the representation theory of $\mathfrak{q}_n$.

 \begin{alphaproposition} 
[Proposition \ref{functorofaff}]
	Suppose $\kk=\C$. There is a strict monoidal functor $\mathcal F:\QAW\rightarrow \End(\mathfrak {q}_n\text{-smod})$.
\end{alphaproposition}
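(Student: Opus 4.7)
The plan is to exploit Theorem A and present $\mathcal{F}$ on the simpler presentation $\QAWC$, where the only new generators beyond those of $\qW$ are the strand-thickness-one dots and crossings of the affine Sergeev superalgebra. Since Brown--Kujawa already supply a monoidal functor from $\Qwb \cong \qW$ (over $\C$) into $\mathfrak{q}_n\text{-smod}$, lifting to endosuperfunctors of $\mathfrak{q}_n\text{-smod}$ is automatic by tensoring on the right. Thus the task reduces to defining $\mathcal{F}$ on the dot generator and verifying only those relations that involve it.

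On objects, I would send the generator $a \in \mathbb{Z}_{\geq 1}$ to the endosuperfunctor $- \otimes S^a(V)$, where $V$ is the natural $\mathfrak{q}_n$-supermodule and $S^a(V)$ its supersymmetric power. On the web generators $\merge$, $\splits$, $\crossing$ and $\wdota$, I would use the Brown--Kujawa assignment: multiplication and comultiplication maps on the super-bialgebra $S(V)$, the graded flip, and the natural transformation on $S^a(V)$ induced by the odd Cartan element $H \in \mathfrak{q}_n$ acting on $V$. For the dot $\bdota$ I would follow the type $A$ blueprint of \cite{DKMZ,SW1} and send it to the Jucys-Murphy-type natural transformation defined, on each $M \otimes S^a(V)$, by the (super)Casimir tensor $\Omega_{M,S^a(V)} = \sum_{i,j} (-1)^{\bar{\jmath}} e_{ij} \otimes e_{ji}$ acting across the two tensor factors. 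This is precisely the element that delivers the polynomial generators of the affine Sergeev superalgebra in its standard action on $M \otimes V^{\otimes r}$.

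The relation check has three layers. First, the relations \eqref{webassoc}--\eqref{equ:r=0} and \eqref{wdotsmovecrossing}--\eqref{equ:onewdot} of $\qW$ hold by the Brown--Kujawa theorem. Second, the purely Sergeev-type relations among thin-strand dots, crossings and $\omega$-dots are exactly the defining relations of the affine Sergeev superalgebra, which act as claimed on $M \otimes V^{\otimes r}$ by a classical computation. Third, one must check the mixed relations \eqref{dotmovecrossing}--\eqref{wdotbdot} of $\QAW$, which govern how $\bdota$ interacts with merges and splits on thick strands.

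The main obstacle is this third layer: verifying that the Casimir natural transformation on $S^a(V)$ is compatible with the merge $\merge$ and split $\splits$ maps into and out of $S^{a_1}(V) \otimes \cdots \otimes S^{a_k}(V)$, and with the $\omega$-dots. My strategy is to pull back through the natural embedding $S^a(V) \hookrightarrow V^{\otimes a}$ (and its supersymmetrization projector), because on $V^{\otimes a}$ the Casimir tensor splits as a sum $\sum_{i=1}^{a} \Omega_{M,V_i}$ plus internal Casimir terms, which is exactly the image of the ``merged sum of dots'' prescribed by the affine relations. The merge and split maps are $\mathfrak{q}_n$-equivariant, so the internal Casimir contributions are absorbed correctly, and the external terms match by construction. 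Once the compatibility with merge/split is established, the interaction of $\bdota$ with $\wdota$ reduces to the Sergeev-level identity already verified, completing the definition of $\mathcal{F}$ as a strict monoidal superfunctor.
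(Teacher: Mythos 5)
Your overall strategy coincides with the paper's: over $\C$ one replaces $\QAW$ by the isomorphic category $\QAWC$ of Theorem~\ref{equiv-over-C}, keeps the Brown--Kujawa functor on the web generators (objects $a\mapsto -\otimes S^a(V)$), sends the thin dot to the Casimir-type element $\Omega$ acting across the two tensor factors, and verifies the remaining relations. The paper carries out exactly your ``second layer'': it checks \eqref{dotmovecrossingC} by a direct computation of $\Omega$ on $m\otimes v_t\otimes v_h$ and quotes \cite[Theorem 7.4.1]{HKS09} for \eqref{wdotbdot over C}. (One small precision: the relevant $\Omega$ is the mixed $\mathfrak q_n$--$\glv$ element $\sum_{i,j}-e^{\bar 0}_{i,j}\otimes f^{\bar 0}_{j,i}+e^{\bar 1}_{i,j}\otimes f^{\bar 1}_{j,i}$ of \cite{HKS09}, not a plain $\mathfrak{gl}$ Casimir; this matters because $f^{\epsilon}_{j,i}\notin\mathfrak q_n$, and it is this specific form that makes $\Omega$ a $\mathfrak q_n$-supermodule endomorphism.)

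Your ``third layer,'' however, is redundant and signals a conflation of the two presentations. Once you pass to $\QAWC$, the thick dots $\bdota$ for $a\ge 2$ are no longer generators: they are \emph{defined} from the thin dot via \eqref{intergralballon over C}, and the thick-strand relations \eqref{dotmovecrossing}--\eqref{wdotbdot} are theorems in $\QAWC$, proved diagrammatically in Lemmas~\ref{lem:2.12re}--\ref{lem:2.13re} as part of Theorem~\ref{equiv-over-C}. Consequently no compatibility of the Casimir on $S^a(V)$ with merges and splits needs to be checked on the representation side, and the ``main obstacle'' you describe does not arise. If you instead insisted on verifying \eqref{dotmovecrossing}--\eqref{wdotbdot} directly in $\End(\mathfrak q_n\text{-smod})$ (e.g.\ by pulling back through $S^a(V)\hookrightarrow V^{\otimes a}$), you would be working with the original presentation of $\QAW$, and invoking Theorem~A would then be pointless; you should commit to one route rather than doing both.
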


The above functor also plays a key role in proving  the linear independence of the expected integral diagrammatic basis for the morphism spaces in $\QAW$ (and $\qW$).
The sets of objects of $\QAW$ and $\qW$ are both given by $\Lambda_{\text{st}}=\cup_{m}\Lambda_{\text{st}}(m)$,
the set of all strict compositions. This is the same as the web category $\W$ of type $A$.
For any $\lambda,\mu\in \Lambda_{\text{st}}(m)$, the morphism space $\Hom_{\W}(\mu,\lambda)$ has a basis given by reduced chicken foot diagrams (CFD) $\text{Mat}_{\lambda,\mu}$   \cite{BEEO}. In particular,  a matrix entry represents the thickness of a leg in a reduced CFD. 

Recall a partition $\lambda$ is called strict if $\lambda_i>\lambda_{i+1}$ for all $i$. Note that this is different from the notion of strict compositions.  
Let $\Par_m$ (resp. $\SPar_m$) be the set of all  partitions (resp. strict partitions) with all parts $\le m$.  
An elementary CFD (of type $Q$) means a reduced CFD with each of its thin strands (or legs) of thickness $m$ decorated by an elementary dot packet $g_{\eta,\nu}$ defined in \eqref{dots_data_simplify}, for $(\nu,\eta)\in \Par_m\times \SPar_m$. 
The set of elementary CFDs from $\mu$ to $\la$ can be identified with  $\PMat_{\lambda,\mu}$ defined in \eqref{dottedreduced}.

 \begin{alphatheorem}  [\cref{basis-theorem,thm:basisofqweb}]
  \label{thm:BasesAffine}
    Suppose that  $\lambda,\mu\in \Lambda_{\text{st}}(m)$. 
 Then 
 \begin{enumerate}
     \item $\Hom_{\QAW}(\mu,\lambda)$ has a $\kk$-basis
    given by $\PMat_{\lambda,\mu}$.
    \item $\Hom_\qW(\mu,\lambda)$ has a $\kk$-basis given by $\PMat^{1}_{\lambda,\mu}$ (see \eqref{equ:parmatr1}).
    \item $\qW$ is a subcategory of $\QAW$.
 \end{enumerate}    
  \end{alphatheorem}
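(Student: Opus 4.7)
The plan is to prove part (1) first, deducing (2) and (3) as essentially formal consequences. The argument for (1) splits, as usual, into a spanning statement and a linear independence statement.

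\textbf{Spanning.} I would describe a straightening algorithm rewriting any morphism in $\Hom_{\QAW}(\mu,\lambda)$ as a $\kk$-linear combination of elements of $\PMat_{\lambda,\mu}$, in three phases. First, use the dot-slide relations \eqref{dotmovecrossing}--\eqref{wdotbdot} to push every black dot through the splits, merges, and crossings until it sits on a designated region of the diagram (for instance, just below the top boundary of the corresponding strand). Each commutation past a trivalent vertex or a crossing produces lower-complexity correction terms, so the algorithm terminates by induction on a weight that measures the total depth of dots. Second, once no dots remain in the interior, the surviving web lies in $\qW$, and I can apply the reduction of type $Q$ webs to reduced chicken foot diagrams in $\Mat_{\lambda,\mu}$, following the straightening of Brown--Kujawa. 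Third, on each thin strand of thickness $m$ the remaining decoration is a word in black dots and $\omega$-labels; use \eqref{doublewdot}, \eqref{wdotbdot}, and the dot-commutation identities to rewrite it as an elementary dot packet $g_{\eta,\nu}$ with $(\nu,\eta)\in\Par_m\times\SPar_m$, giving an element of $\PMat_{\lambda,\mu}$.

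\textbf{Linear independence.} This is the main obstacle and will be handled in characteristic zero first. Over $\kk=\C$ I would apply the functor $\mathcal F:\QAW\to\End(\mathfrak{q}_n\text{-smod})$ of Proposition~2 with $n$ taken large enough relative to the sizes of $\lambda$ and $\mu$. The strategy is to evaluate $\mathcal F$ of the purported basis on a convenient highest weight vector in $S^{\mu_1}(V)\otimes\cdots\otimes S^{\mu_r}(V)$ and show that, ordering $\PMat_{\lambda,\mu}$ lexicographically by the matrix part and then by the dot data $(\nu,\eta)$, the resulting family of vectors is unitriangular with nonzero diagonal entries. Here the key computation is that the action of the black dot $\bdota$ realizes a Jucys--Murphy-type operator on the Sergeev superalgebra, while $\wdota$ realizes the Clifford generator, so that the elementary packets $g_{\eta,\nu}$ are sent to linearly independent elements of the centralizer algebra. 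Combined with the spanning step, this forces the inequality between the number of basis candidates and the rank of $\Hom_{\QAW[\C]}(\mu,\lambda)$ to be an equality. To lift to an arbitrary commutative ring $\kk$ of characteristic not two, I would set up a $\Z[\tfrac12]$-form: the relations defining $\QAW$ are integral, so the Hom-space over $\kk$ is the base change of the integral Hom-space; the spanning set $\PMat_{\lambda,\mu}$ is defined integrally and spans integrally, and since it has the right cardinality (matching the $\C$-rank), no $\kk$-linear dependence can occur.

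\textbf{Parts (2) and (3).} For (2), the spanning algorithm above restricts to $\qW$: phase (i) is vacuous because there are no black dots, phase (ii) produces a reduced CFD, and phase (iii) reduces the $\omega$-decorations alone, which correspond exactly to the subset $\PMat^{1}_{\lambda,\mu}\subset\PMat_{\lambda,\mu}$ indexed by $(\nu,\eta)$ with $\nu=\varnothing$. Linear independence in $\Hom_{\qW}(\mu,\lambda)$ then follows from (1) together with the obvious functor $\qW\to\QAW$ on generators, which sends the candidate $\qW$-basis into the proved $\QAW$-basis. Part (3) is then immediate: this functor is injective on every Hom-space, so $\qW$ embeds as a subcategory.

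The main obstacle is the linear independence step — specifically, the explicit triangularity computation for $\mathcal F$ acting on a tensor product of supersymmetric powers. The black-dot action involves Sergeev-type Jucys--Murphy elements and the $\omega$-labels involve Clifford factors, so one must choose the target vector and the ordering on $\PMat_{\lambda,\mu}$ delicately to decouple the contributions of $\nu$ and $\eta$ and to prevent cancellation between different matrix shapes. Everything else is either a routine termination argument or a formal base-change.
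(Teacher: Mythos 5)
Your overall architecture (spanning plus linear independence for (1), then (2) and (3) as formal consequences via the functor $\qW\to\QAW$, then integral base change) matches the paper, and your spanning outline is broadly the right shape, though it underestimates the work hidden in "rewrite the decoration as an elementary dot packet": pushing black dots through crossings and merges generates $\omega^\circ$-type correction terms, and forcing the resulting word into the form $g_{\nu,\eta}$ with $\nu$ \emph{strict} requires the $\equiv$-relations of the appendix (in particular the analogues of Lemmas \ref{rrball}, \ref{strictpartition} and \ref{doublestring-g}), not just \eqref{doublewdot} and \eqref{wdotbdot}.

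The genuine gap is in the linear independence step. You propose to evaluate $\mathcal F$ on a highest weight vector of $S^{\mu_1}(V)\otimes\cdots\otimes S^{\mu_r}(V)$ and argue unitriangularity. This cannot work: $\Hom_{\QAW}(\mu,\lambda)$ has infinite rank (the dot packets $g_{\nu,\eta}$ range over all pairs of partitions), while any evaluation of $\mathcal F$ at a finite-dimensional $\mathfrak q_n$-module lands in a finite-dimensional Hom-space, so the images must become linearly dependent. Worse, on the trivial module the element $\Omega$ realizing the black dot acts by zero, and on any finite-dimensional module $\Omega$ satisfies a polynomial identity, so powers of the dot collapse. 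The paper's key idea, which is missing from your proposal, is to evaluate at the \emph{generic Verma module} $M^{\mathrm{gen}}$, which is free over $U(\mathfrak h_{\bar 0})$; then the black dot acts, up to lower filtration degree, by linear polynomials in the Cartan variables $y_i$, and an elementary dot packet $g_{\nu,\eta}$ acts by a product of a Vandermonde factor, a polynomial $g_\nu$ whose leading term is $e_{\bar\nu-\rho_k}$, and $e_\eta$. These are \emph{partially} symmetric polynomials (symmetric only in the variables not consumed by the index flips $i\mapsto\bar i$ coming from the odd $\omega^\circ$ dots), and their linear independence is not a routine triangularity check: it is the content of the appendix (\S\ref{subse:partiallysym}, Lemma \ref{indepedance of k wdots}), proved via a bespoke partial order on pairs $(\lambda,\mu)$ and raising-operator combinatorics. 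Without the generic Verma module and some substitute for that combinatorial lemma, your "main obstacle" is not merely delicate — the chosen target space is structurally incapable of separating the candidate basis.
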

The basis for the finite web category $\qW$ coincides with the diagrammatic basis for $\Qwb$ given in \cite{Br19} when $\kk = \mathbb{C}$. In ongoing work by Davidson, Kujawa, Muth, and Zhu \cite{DKMZ}, a category closely related to $\QAW$ is introduced alongside an analogous basis theorem

The main challenge in proving the basis theorem for the affine web category  $\QAW$ is to establish the linear independence of the elementary dot packet $g_{\eta, \nu}$ in $\End_{\QAW}(m)$, for $(\eta, \nu) \in \Par_m \times \SPar_m$.

This challenge arises from a new phenomenon: when acting on the tensor product of the generic Verma module of $\mathfrak{q}_n$ and $S^m(V)$, the element $g_{\eta, \nu}$ corresponds to partially symmetric polynomials indexed by $(\eta, \nu)$, rather than symmetric polynomials indexed by partitions, as in the affine web of type $A$ case in \cite{SW1}. As a result the new phenomenon requires a very different treatment and we develop a new strategy to demonstrate the linear independence of these partially symmetric polynomials. See Appendix \S \ref{subse:partiallysym}.

It is interesting to explore the explicit connection between  cyclotomic quotients of $\QAW$  and  finite $W$-algebra of type $Q$. 
Furthermore, 
using $\QAW$ as a building block, it is natural to expect that  there is a type $Q$ analog of the 
affine Schur category \cite{SW2}  and  its cyclotomic quotients, where the latter is expected to give a diagrammatic presentation of the higher level of the queer Schur superalgebras.

After the initial release of this work on arXiv, Alistair Savage brought to our attention an interesting connection: the quantum analogue of the affine web category of type 
$Q$ proposed in this paper is expected to arise as a partial idempotent completion of the quantum isomeric supercategory introduced in \cite{Savage_2024}. This observation opens a promising direction for further research.

\subsection{The organization}
The paper is organized as follows. In Section 2, we introduce the web category $\qW$ of type $Q$ and its affinization $\QAW$ over any commutative ring $\kk$. Many implicit relations in $\QAW$ are also established for later use.

In Section 3, for $\kk$ a field of characteristic zero, we show that $\qW$ is isomorphic to the category $\qW'$ introduced in \cite{BKu21} and provide a simplified presentation of $\QAW$.

In Section 4, we present the functor from $\QAW$ to $\End(\mathfrak{q}_n\text{-smod})$ (over $\C$) and establish the basis theorem for both $\QAW$ and $\qW$.

Finally, the appendix includes additional $\equiv$-relations in $\QAW$ and a useful result on the linear independence of certain partially symmetric polynomials. These results are used to prove both the spanning and linear independence aspects of the basis theorem for $\QAW$.

\noindent {\bf Acknowledgement.} 
LS is partially supported by NSFC (Grant No. 12071346). We thank Weiqiang Wang for encouraging us to study this research topic. We thank Alistair Savage for highlighting the potential link to \cite{Savage_2024}  and for pointing out the typos of the first version of the paper.
\section{The affine web category of type $Q$}
\label{sec:q-AWeb}
 Let $\kk$ be a commutative ring with $2$ is invertible throughout this paper. All categories and functors will be $\kk$-linear without further mention.
\subsection{The  web category of type $A$}
We first recall the definition of the web category $\W$ (of type $A$).
 \begin{definition}\cite{CKM,BEEO}
 \label{def-web}
    The web category $\W$ is the strict monoidal category generated by objects $a\in \mathbb Z_{\ge1}$. The object $a$ and its identity morphism  will be drawn as a vertical strand labeled by $a$:
$\genoba$.
The generating morphisms are the merges, splits, and (thick) crossings depicted as
\begin{align}
\label{merge+split+crossing}
\merge
			&:(a,b) \rightarrow (a+b),&
\splits
			&:(a+b)\rightarrow (a,b),&
\crossing
			&:(a,b) \rightarrow (b,a),
\end{align}
for $a,b \in \Z_{\ge 1}$,
		subject to the following relations \eqref{webassoc}--\eqref{equ:r=0}, for $a,b,c,d \in \Z_{\ge 1}$ with $d-a=c-b$:
\begin{align}
\label{webassoc}
=0$ if $r<0$ or $r\ge a$.
 We will also use $\omega_{a,r}$   (resp. $\omega^\circ_{a,r}$) to abbreviate $\wkdota$ (resp. $\rcircdot$).
 
For any $ m\in \mathbb N$, a composition $\mu=(\mu_1,\mu_2,\ldots,\mu_n)$ of $m$ is a sequence of non-negative integers such that $\sum_{i\ge 1} \mu_i=m$. The length $l(\mu)$ of $\mu$ is the total number $n$ of parts. Let $\Lambda(m)$ denote the set of all compositions of $m$, and $\Lambda=\bigcup_m \Lambda(m)$ denote the set of all compositions. A composition $\mu$ is called strict if  $\mu_i>0$ for all $i\ge 1$.  
We denote by  $\Lambda_{\text{st}}$ (and respectively, $\Lambda_{\text{st}}(m)$) the set of all strict compositions (and respectively, of $m$)

The set of objects in $\W $ ($\qW$ and $\QAW$) is $\Lambda_{\text{st}}$
with the empty sequence as the unit object. By definition, any morphisms in $\QAW$ are a linear combination of diagrams obtained from the tensor product and compositions of generators together with identity morphism. Such a diagram is called a dotted web diagram. Moreover, for any $\lambda,\mu\in \Lambda_{\text{st}}$ and $\mathcal C\in\{\qW,\QAW\}$, we have $\Hom_{\mathcal C}(\lambda,\mu)\neq0$ only if $\lambda,\mu\in \Lambda_{st}(m)$ for some $m$.

\subsection{More implied relations}	 
 Recall the  \emph{super-interchange law} in any supermonoidal category:
\begin{equation}
\label{super-interchange}
\begin{tikzpicture}[baseline = 19pt,scale=0.4,color=\clr,inner sep=0pt, minimum width=11pt]
   		\draw[-,thick] (0,0) to (0,3);
   		\draw[-,thick] (2,0) to (2,3);
   		\draw (0,2) node[circle,draw,thick,fill=white]{$f$};
   		\draw (2,1) node[circle,draw,thick,fill=white]{$g$};
\end{tikzpicture}
   	~=~
\begin{tikzpicture}[baseline = 19pt,scale=0.4,color=\clr,inner sep=0pt, minimum width=11pt]
   		\draw[-,thick] (0,0) to (0,3);
   		\draw[-,thick] (2,0) to (2,3);
   		\draw (0,1.5) node[circle,draw,thick,fill=white]{$f$};
   		\draw (2,1.5) node[circle,draw,thick,fill=white]{$g$};
\end{tikzpicture}
   	~=(-1)^{\p{f}\p{g}}~
\begin{tikzpicture}[baseline = 19pt,scale=0.4,color=\clr,inner sep=0pt, minimum width=11pt]
   		\draw[-,thick] (0,0) to (0,3);
   		\draw[-,thick] (2,0) to (2,3);
   		\draw (0,1) node[circle,draw,thick,fill=white]{$f$};
   		\draw (2,2) node[circle,draw,thick,fill=white]{$g$};
\end{tikzpicture}
   	~.
\end{equation}
It is easy to check that there exists an automorphism $\div:\QAW\rightarrow {\QAW}^{\text{op}}$ that fixes objects and rotates morphism diagrams by $180^\circ$ around a horizontal axis. For example, by \eqref{super-interchange} we have
$$
\begin{tikzpicture}[baseline = 3pt, scale=0.5, color=\clr]
	\draw[-,line width=1.5pt] (0,-.5) to (0,1.3);
        \draw[-,line width=1.5pt] (.8,-.5) to (.8,1.3);
	\draw(0,0.4) \wdot;
        \draw(.8,0.4) \wdot;
\end{tikzpicture}^\div
=
\begin{tikzpicture}[baseline = 3pt, scale=0.5, color=\clr]
	\draw[-,line width=1.5pt] (0,-.5) to (0,1.3);
        \draw[-,line width=1.5pt] (.8,-.5) to (.8,1.3);
	\draw(0,.6) \wdot;
        \draw(.8,0.2) \wdot;
\end{tikzpicture}^\div
=
\begin{tikzpicture}[baseline = 3pt, scale=0.5, color=\clr]
	\draw[-,line width=1.5pt] (0,-.5) to (0,1.3);
        \draw[-,line width=1.5pt] (.8,-.5) to (.8,1.3);
	\draw(0,0.2) \wdot;
        \draw(.8,0.6) \wdot;
\end{tikzpicture}
=-\:
\begin{tikzpicture}[baseline = 3pt, scale=0.5, color=\clr]
	\draw[-,line width=1.5pt] (0,-.5) to (0,1.3);
        \draw[-,line width=1.5pt] (.8,-.5) to (.8,1.3);
	\draw(0,0.4) \wdot;
        \draw(.8,0.4) \wdot;
\end{tikzpicture}.
$$
and hence \eqref{dotmovecrossing} is preserved by $\div$. We will frequently use this $\div$ later to derive many equalities by symmetry. 

\begin{lemma}
       The following relations hold in $\QAW$:
\begin{equation}
\label{dots-ball}
\begin{tikzpicture}[baseline = 1.5mm, scale=0.8, color=\clr]
	\draw[-,line width=1.5pt] (0,0.7) to(0,.3);
	\draw[-,line width=1.5pt] (0,0.3) to(0,-.4);
	\draw(0,0.4) \wdot; 
	\draw(0,-0.1) \bdot; 
	\draw (-0.4,-0.1) node {$\scriptstyle \omega^\circ_a$};
	\draw (-0,-0.6) node {$\scriptstyle k$};
\end{tikzpicture}
			=
\begin{tikzpicture}[baseline = 1.5mm, scale=0.8, color=\clr]
	\draw[-,line width=1.5pt] (0,0.7) to(0,.5);
	\draw[-,line width=1.5pt] (0,0.5) to(0,-.4);
	\draw(0,-0.1) \wdot; 
	\draw(0,0.4) \bdot; 
	\draw (-0.4,0.4) node {$\scriptstyle \omega^\circ_a$};
	\draw (-0,-0.6) node {$\scriptstyle k$};
\end{tikzpicture}\: , 
         \quad
\begin{tikzpicture}[baseline = 1.5mm, scale=.8, color=\clr]
	\draw[-,line width=1.5pt] (0,0.7) to(0,.3);
	\draw[-,line width=1.5pt] (0,0.3) to(0,-.4);
	\draw(0,0.4) \bdot; 
	\draw(0,-0.1) \bdot; 
	\draw (-0.4,-0.1) node {$\scriptstyle \omega^\circ_a$};
	\draw (-0,-0.6) node {$\scriptstyle k$};
\end{tikzpicture}
			= -
\begin{tikzpicture}[baseline = 1.5mm, scale=0.8, color=\clr]
	\draw[-,line width=1.5pt] (0,0.7) to(0,.5);
	\draw[-,line width=1.5pt] (0,-.4) to(0,.5);
	\draw(0,-0.1) \bdot; 
	\draw(0,0.4) \bdot; 
	\draw (-0.4,0.4) node {$\scriptstyle \omega^\circ_a$};
	\draw (-0,-0.6) node {$\scriptstyle k$};
\end{tikzpicture}\: , 
            \quad
\begin{tikzpicture}[baseline = 1.5mm, scale=0.8, color=\clr]
	\draw[-,line width=1.5pt] (0,0.7) to(0,-.4);
	\draw(0,-0.1) \bdot; 
	\draw (-.3,-0.1) node {$\scriptstyle \omega_a$};
	\draw(0,0.4) \wdot; 
	\draw (-0,-0.6) node {$\scriptstyle k$};
\end{tikzpicture}
			=-
\begin{tikzpicture}[baseline = 1.5mm, scale=0.8, color=\clr]
	\draw[-,line width=1.5pt] (0,0.7) to(0,-.4);
	\draw(0,-0.1) \wdot; 
	\draw (-.3,0.4) node {$\scriptstyle \omega_a$};
	\draw(0,0.4) \bdot; 
	\draw (-0,-0.6) node {$\scriptstyle k$};
\end{tikzpicture}
			+2\:
\begin{tikzpicture}[baseline = 1.5mm, scale=0.8, color=\clr]
	\draw[-,line width=1.5pt] (0,0.7) to(0,.5);
	\draw[-,line width=1.5pt] (0,-.4) to(0,.5);
	\draw(0,0.15) \bdot; 
	\draw (-0.4,0.15) node {$\scriptstyle \omega^\circ_a$};
	\draw (-0,-0.6) node {$\scriptstyle k$};
\end{tikzpicture},
\quad
\begin{tikzpicture}[baseline = 1.5mm, scale=0.8, color=\clr]
	\draw[-,line width=1.5pt] (0,0.7) to(0,-.4);
	\draw(0,-0.1) \bdot; 
	\draw (-.3,-0.1) node {$\scriptstyle \omega_a$};
	\draw(0,0.4) \wdot; 
	\draw (-0,-0.6) node {$\scriptstyle k$};
\end{tikzpicture}
	=
\begin{tikzpicture}[baseline = 1.5mm, scale=0.8, color=\clr]
	\draw[-,line width=1.5pt] (0,0.7) to(0,-.4);
	\draw(0,-0.1) \wdot; 
	\draw (-.3,0.4) node {$\scriptstyle \omega_a$};
	\draw(0,0.4) \bdot; 
	\draw (-0,-0.6) node {$\scriptstyle k$};
\end{tikzpicture}
			+2\:
\begin{tikzpicture}[baseline = 1.5mm, scale=0.7, color=\clr]
	\draw[-,line width=1.5pt] (0,0.7) to (0,.9);
	\draw[-,line width=1.5pt] (0,-0.2) to (0,-.4);
	\draw[-,line width=1pt] (0,-.2) to[out=180, in=180] (0,.7);
	\draw[-,line width=1pt] (0,-.2) to[out=0, in=0] (0,.7);
	\draw(-.2,.5) \wdot; 
	\draw(-.2,0) \bdot;  
	\node at (-.55,0) {$\scriptstyle a $};
	\node at (0,-.6) {$\scriptstyle k$};
\end{tikzpicture}
\end{equation}
\end{lemma}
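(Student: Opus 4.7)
The plan is to reduce all four identities to the standard dot-sliding and super-interchange relations in $\QAW$. The first step is to unpack the dot-packet notations by their definitions \eqref{equ:r=0andr=a} and \eqref{bw-dot}: on a $k$-strand,
\begin{align*}
\omega_a &= (\text{merge}) \circ (\bdot_a \otimes \text{id}_{k-a}) \circ (\text{split}),\\
\omega^\circ_a &= (\text{merge}) \circ (\bdot_a \otimes \wdot_{k-a}) \circ (\text{split}).
\end{align*}
After such an expansion, each identity reduces to sliding the outer white/black dot through the top merge (or bottom split) of the ball using \eqref{wdot-move-split and merge} or \eqref{bdotmove}. The only additional tools required are \eqref{wdotbdot} for anticommuting $\wdot$ past $\bdot$ on a single strand, \eqref{doublewdot} to collapse $\wdot^2 = (k-a)\,\text{id}$ on the right strand, and the super-interchange law \eqref{super-interchange} to track signs when swapping heights of morphisms on adjacent strands.

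For the second identity of \eqref{dots-ball}, sliding $\bdot_k$ down through the top merge via \eqref{bdotmove} produces $\bdot_a\otimes\bdot_{k-a}$ inside the ball; the $\bdot_{k-a}$ then lies directly above the internal $\wdot_{k-a}$, and \eqref{wdotbdot} yields a minus sign. Pushing $\bdot_a\otimes\bdot_{k-a}$ back through the split (again by \eqref{bdotmove}) reassembles $\bdot_k$ below, giving the anticommutator. The first identity is proved in the same spirit: sliding $\wdot_k$ through the merge by \eqref{wdot-move-split and merge} splits into two pieces, one in which a fresh $\wdot_a$ lands on the $a$-strand above $\bdot_a$ (contributing a term of the form $\text{merge}\circ((\wdot_a\bdot_a)\otimes\wdot_{k-a})\circ\text{split}$ after applying \eqref{wdotbdot} and super-interchange), and one in which a fresh $\wdot_{k-a}$ combines with the internal one to yield $(k-a)\omega_a$ via \eqref{doublewdot}. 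Running the mirror computation for $\omega^\circ_a\circ\wdot_k$ produces the same two pieces, so the two sides are equal.

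For the third and fourth identities, one performs the same procedure but with $\omega_a$ in place of $\omega^\circ_a$, so there is no internal $\wdot_{k-a}$ to begin with. Sliding $\wdot_k$ through the top merge yields a term $D := \text{merge}\circ((\wdot_a\bdot_a)\otimes\text{id})\circ\text{split}$ (which is exactly the auxiliary ball appearing on the right of the fourth identity) and a term $C := \omega^\circ_a$ arising because the fresh $\wdot_{k-a}$ now lands on an otherwise empty right strand and combines with $\bdot_a$ to form $\omega^\circ_a$. This gives $\wdot_k\circ\omega_a = D + C$. The parallel calculation below $\omega_a$, sliding $\wdot_k$ upwards through the split by \eqref{wdot-move-split and merge}, produces $\omega_a\circ\wdot_k = -D + C$, where the sign flip on $D$ comes from $\bdot_a\wdot_a = -\wdot_a\bdot_a$. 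Adding these two equalities yields the third identity ($\wdot_k\omega_a + \omega_a\wdot_k = 2C$) and subtracting yields the fourth ($\wdot_k\omega_a - \omega_a\wdot_k = 2D$).

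The main obstacle will be the careful bookkeeping of super-interchange signs in compositions such as $(\wdot_a\otimes\text{id})\circ(\bdot_a\otimes\wdot_{k-a})$, where both $\wdot_a$ and $\wdot_{k-a}$ are odd: each reordering may or may not produce a sign according to \eqref{super-interchange}, and the terms must be organized so that the cancellations producing $D$ and $C$ are transparent. Once this sign accounting is done cleanly, the four identities fall out simultaneously from the two computations of $\wdot_k\omega_a$ and $\omega_a\wdot_k$ (and their $\omega^\circ_a$-analogues).
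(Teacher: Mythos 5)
Your proposal is correct and follows exactly the route the paper intends: the paper's own proof is a one-line citation of \eqref{wdot-move-split and merge}, \eqref{bdotmove}, \eqref{wdotbdot} and \eqref{super-interchange}, and your expansion of $\omega_a$ and $\omega^\circ_a$ via their defining split--merge diagrams, followed by sliding the outer dot through the merge (resp.\ split) and tracking the super-interchange signs against the sign from \eqref{wdotbdot}, is precisely that argument spelled out. In particular you correctly use that \eqref{bdotmove} is multiplicative (a black dot passes through a split onto \emph{both} legs), which is what makes the second identity a clean anticommutator, and your decomposition $\wdot_k\omega_a = D + \omega^\circ_a$, $\omega_a\wdot_k = -D+\omega^\circ_a$ yields the last two identities exactly as stated.
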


\begin{proof}
	    This follows from \eqref{wdot-move-split and merge}, \eqref{bdotmove},\eqref{wdotbdot} and \eqref{super-interchange}.
\end{proof}

\begin{lemma}
\label{killwdot}
 	The following relation holds in $\QAW$:
 \begin{equation}
  \label{equ:twowhite=0}
 		\begin{tikzpicture}[baseline = -.5mm, scale=1.2, color=\clr]
 			\draw[-,line width=1.5pt](0,-.45) to (0,-0.25);
 			\draw[-,line width=1pt] (0,-.25) to [out=180,in=-180] (0,.25);
 			\draw[-,line width=1pt] (0,-.25) to [out=0,in=0] (0,.25);
 			\draw[-,line width=1.5pt](0,.45) to (0,0.25);
 			\draw (-0.15,0) \wdot;
 			\draw (0.15,0) \wdot;
 			\draw (-0.35,0) node{$\scriptstyle a$};
 			\node at (0.35,0) {$\scriptstyle b$};
 		\end{tikzpicture}
 		=0.    	
 \end{equation}
\end{lemma}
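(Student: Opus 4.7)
Let $X$ denote the left-hand side of \eqref{equ:twowhite=0}, so that $X = m \circ (w_a \otimes w_b) \circ s$, where $s \colon (a+b) \to (a,b)$ and $m \colon (a,b) \to (a+b)$ are the split and merge, and $w_c$ denotes the white dot on a $c$-strand. The plan is to establish, as an auxiliary identity, that the ``one-dot balloon'' $Y_a := m \circ (w_a \otimes 1_b) \circ s$ equals $\binom{a+b-1}{a-1}\, w_{a+b}$, and then to rewrite $X$ as an expression in $Y_a$ and $w_{a+b}$ which collapses to zero by a binomial identity.

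For the auxiliary identity, I would first substitute $w_a = m_{(1,a-1)\to a} \circ (w_1 \otimes 1_{a-1}) \circ s_{a\to(1,a-1)}$ from \eqref{equ:onewdot} into the expression for $Y_a$. The associativity relation \eqref{webassoc} then fuses the two levels of splits and merges into a single three-fold split and merge through the composition $(1, a-1, b)$. Regrouping that as $(1,\, a+b-1)$ (again by \eqref{webassoc}) exposes a dotless inner balloon on the $(a-1)$- and $b$-strands which collapses to $\binom{a+b-1}{a-1}\cdot 1_{(a+b-1)}$ by \eqref{equ:r=0}, leaving the outer balloon $(a+b) \to (1, a+b-1) \to (a+b)$ decorated with a single $w_1$ on the thin strand; this last balloon equals $w_{a+b}$ by another application of \eqref{equ:onewdot}.

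To finish, apply the split version of \eqref{wdot-move-split and merge} to push the right-hand white dot down through the split, giving $(1 \otimes w_b)\circ s = s\circ w_{a+b} - (w_a \otimes 1)\circ s$. Substituting this into $X = m \circ (w_a\otimes 1)\circ (1 \otimes w_b)\circ s$, together with $(w_a \otimes 1)^2 = w_a^2 \otimes 1_b = a\cdot 1_{(a,b)}$ from \eqref{doublewdot} and $m\circ s = \binom{a+b}{a}\cdot 1_{(a+b)}$ from \eqref{equ:r=0}, yields $X = Y_a\, w_{a+b} - a\binom{a+b}{a}\, 1_{(a+b)}$. Now insert the auxiliary identity and use $w_{a+b}^2 = (a+b)\cdot 1_{(a+b)}$ from \eqref{doublewdot} to obtain $X = \binom{a+b-1}{a-1}(a+b)\cdot 1_{(a+b)} - a\binom{a+b}{a}\cdot 1_{(a+b)} = 0$, the last equality holding because $\binom{a+b-1}{a-1}(a+b) = a\binom{a+b}{a}$.

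The main obstacle is the auxiliary identity: although it follows cleanly from iterated use of \eqref{equ:onewdot}, \eqref{webassoc}, and \eqref{equ:r=0}, it requires careful regrouping of nested merges and splits so that the dotless sub-balloon on the $(a-1, b)$-strands becomes visible for the collapse via \eqref{equ:r=0}. Once that scalar identification of $Y_a$ is in hand, the remainder is a short calculation.
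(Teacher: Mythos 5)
Your proof is correct, and it takes a genuinely different route from the paper. The paper first treats the case $a=b=1$ by establishing an auxiliary identity for the thin balloon decorated with four white dots (which requires \eqref{mergesplit}, the crossing relation \eqref{wdotsmovecrossing}, and the super-interchange law), deduces $2X=0$, and divides by $2$; the general case is then reduced to the thin case by substituting \eqref{equ:onewdot} for each white dot and regrouping via \eqref{webassoc}. You instead evaluate the one-dotted balloon $Y_a=m\circ(w_a\otimes 1_b)\circ s$ as the scalar $\binom{a+b-1}{a-1}$ times $w_{a+b}$ (your derivation of this via \eqref{equ:onewdot}, \eqref{webassoc} and \eqref{equ:r=0} is sound; for $a=1$ it degenerates to \eqref{equ:onewdot} itself, which you may want to note), and then combine it with the dot-splitting relation \eqref{wdot-move-split and merge}, $w_c^2=c$ from \eqref{doublewdot}, and \eqref{equ:r=0} to reduce everything to the binomial identity $(a+b)\binom{a+b-1}{a-1}=a\binom{a+b}{a}$. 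Your argument handles all $a,b$ uniformly, never touches crossings or \eqref{mergesplit}, and does not invoke invertibility of $2$ for this lemma (the paper's base case does), so it is arguably more elementary; the paper's four-dot identity \eqref{equ:fourwdot}, on the other hand, is reused nowhere else, so nothing is lost by bypassing it.
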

\begin{proof}
       First, note that
\begin{equation}
\label{equ:fourwdot}
\begin{tikzpicture}[baseline = 7.5pt,scale=.7, color=\clr]
	\draw[-,line width=1pt] (-.2,1.1) to (.08,.64) to (.34,1.1) to (.08,.64);
	\draw[-,line width=1.5pt] (.08,.66) to (.08,0.08);
	\draw[-,line width=1pt] (-.2,-.4) to (.08,0.1) to (.34,-.4) to (.08,0.1);
	\draw (-.06,.85) \wdot;
	\draw (.21,.85) \wdot;
	\draw (-.06,-.15) \wdot;
	\draw (.21,.-.15) \wdot;
	\node at (-0.2,-.6) {$\scriptstyle 1$};
	\node at (0.34,-.6) {$\scriptstyle 1$};
\end{tikzpicture}
		\overset{\eqref{mergesplit}}{=}          
\begin{tikzpicture}[baseline = 7.5pt,scale=.7, color=\clr]
	\draw[-,line width=1pt] (-.2,.7) to (.34,0);
	\draw[-,line width=1pt] (-.2,0) to (.34,0.7);
	\draw[-,line width=1pt] (-.2,0) to (-.2,-.4);
	\draw[-,line width=1pt] (-.2,.7) to (-.2,1.1);
	\draw[-,line width=1pt] (.34,0) to (.34,-.4);
	\draw[-,line width=1pt] (.34,.7) to (.34,1.1);
	\draw (-.2,-.2) \wdot;
	\draw (.34,-.2) \wdot;
	\draw (-.2,.9) \wdot;
	\draw (.34,.9) \wdot;
	\node at (-0.2,-.6) {$\scriptstyle 1$};
	\node at (0.34,-.6) {$\scriptstyle 1$};
\end{tikzpicture}
		+
\begin{tikzpicture}[baseline = 7.5pt,scale=.7, color=\clr]
	\draw[-,line width=1pt] (-.2,1.1) to (-.2,-.4);
	\draw[-,line width=1pt] (.34,1.1) to (.34,-.4); 			
	\draw (-.2,0) \wdot;
	\draw (.34,0) \wdot;
	\draw (-.2,.7) \wdot;
	\draw (.34,.7) \wdot;
	\node at (-0.2,-.6) {$\scriptstyle 1$};
	\node at (0.34,-.6) {$\scriptstyle 1$};
\end{tikzpicture}
		\overset{\eqref{wdotsmovecrossing}}{\underset{\eqref{super-interchange}}{=}}
\begin{tikzpicture}[baseline = 7.5pt,scale=.7, color=\clr]
	\draw[-,line width=1pt] (-.2,.3) to (.34,-.4);
	\draw[-,line width=1pt] (-.2,-.4) to (.34,0.3);
	\draw[-,line width=1pt] (-.2,.3) to (-.2,1.1);
	\draw[-,line width=1pt] (.34,.3) to (.34,1.1);
	\draw (-.2,.4) \wdot;
	\draw (.34,.6) \wdot;
	\draw (-.2,.9) \wdot;
	\draw (.34,.8) \wdot;
	\node at (-0.2,-.6) {$\scriptstyle 1$};
	\node at (0.34,-.6) {$\scriptstyle 1$};
\end{tikzpicture}
		-
\begin{tikzpicture}[baseline = 7.5pt,scale=.7, color=\clr]
	\draw[-,line width=1pt] (-.2,1.1) to (-.2,-.4);
	\draw[-,line width=1pt] (.34,1.1) to (.34,-.4); 			
	\draw (-.2,0.6) \wdot;
	\draw (.34,-.1) \wdot;
	\draw (-.2,.9) \wdot;
	\draw (.34,.2) \wdot;
	\node at (-0.2,-.6) {$\scriptstyle 1$};
	\node at (0.34,-.6) {$\scriptstyle 1$};
\end{tikzpicture}
		\overset{\eqref{doublewdot}}=
\begin{tikzpicture}[baseline = 7.5pt,scale=1, color=\clr]
	\draw[-,line width=1pt] (-.2,.7) to (.34,0);
	\draw[-,line width=1pt] (-.2,0) to (.34,0.7);
	\node at (-0.2,-.2) {$\scriptstyle 1$};
	\node at (0.34,-.2) {$\scriptstyle 1$};
\end{tikzpicture}
		-
\begin{tikzpicture}[baseline = 7.5pt,scale=1, color=\clr]
	\draw[-,line width=1pt] (-.2,0) to (-.2,.7);
	\draw[-,line width=1pt] (.25,0) to (.25,.7); 	\node at (-0.2,-.2) {$\scriptstyle 1$};
	\node at (0.25,-.2) {$\scriptstyle 1$};
\end{tikzpicture}
		\overset{\eqref{mergesplit}}=   
\begin{tikzpicture}[baseline = 7.5pt,scale=.7, color=\clr]
	\draw[-,line width=1pt] (-.2,1.1) to (.08,.64) to (.34,1.1) to (.08,.64);
	\draw[-,line width=1.5pt] (.08,.66) to (.08,0.08);
	\draw[-,line width=1pt] (-.2,-.4) to (.08,0.1) to (.34,-.4) to (.08,0.1);
	\node at (-0.2,-.6) {$\scriptstyle 1$};
	\node at (0.34,-.6) {$\scriptstyle 1$};
\end{tikzpicture}
		-2
\begin{tikzpicture}[baseline = 7.5pt,scale=1, color=\clr]
	\draw[-,line width=1pt] (-.2,0) to (-.2,.7);
	\draw[-,line width=1pt] (.25,0) to (.25,.7);	
        \node at (-0.2,-.2) {$\scriptstyle 1$};
	\node at (0.25,-.2) {$\scriptstyle 1$};
\end{tikzpicture}\:.		
\end{equation}
	Suppose $a=b=1$. Then
\[2\:\:
\begin{tikzpicture}[baseline = -.5mm, scale=1.2, color=\clr]
		\draw[-,line width=1.5pt](0,-.45) to (0,-0.25);
		\draw[-,line width=1pt] (0,-.25) to [out=180,in=-180] (0,.25);
		\draw[-,line width=1pt] (0,-.25) to [out=0,in=0] (0,.25);
		\draw[-,line width=1.5pt](0,.45) to (0,0.25);
		\draw (-0.15,0) \wdot;
		\draw (0.15,0) \wdot;
		\node at (0,-.6) {$\scriptstyle 2$};
\end{tikzpicture}
	\overset{\eqref{equ:r=0}}{=}
\begin{tikzpicture}[baseline = -.5mm, scale=1.2, color=\clr]
		\draw[-,line width=1.5pt](0,.6) to (0,0.7);
		\draw[-,line width=1pt] (0,.1) to [out=180,in=-180] (0,.6);
		\draw[-,line width=1pt] (0,.1) to [out=0,in=0] (0,.6);
		\draw[-,line width=1.5pt](0,0) to (0,0.1);
		\draw[-,line width=1pt] (0,-.5) to [out=180,in=-180] (0,0);
		\draw[-,line width=1pt] (0,-.5) to [out=0,in=0] (0,0);
		\draw (-0.15,-.25) \wdot;
		\draw (0.15,-.25) \wdot;
		\draw[-,line width=1.5pt](0,-.5) to (0,-0.6);
		\node at (0,-.75) {$\scriptstyle 2$};
\end{tikzpicture}
	\overset{\eqref{equ:fourwdot}}{=}
\begin{tikzpicture}[baseline = -.5mm, scale=1.2, color=\clr]
		\draw[-,line width=1.5pt](0,.6) to (0,0.7);
		\draw[-,line width=1pt] (0,.1) to [out=180,in=-180] (0,.6);
		\draw[-,line width=1pt] (0,.1) to [out=0,in=0] (0,.6);
		\draw[-,line width=1.5pt](0,0) to (0,0.1);
		\draw[-,line width=1pt] (0,-.5) to [out=180,in=-180] (0,0);
		\draw[-,line width=1pt] (0,-.5) to [out=0,in=0] (0,0);
		\draw (-0.15,.35) \wdot;
		\draw (0.15,.35) \wdot;
		\draw (-0.12,-.1) \wdot;
		\draw (0.12,-.1) \wdot;
		\draw (-0.12,-.4) \wdot;
		\draw (0.12,-.4) \wdot;
		\draw[-,line width=1.5pt](0,-.5) to (0,-0.6);
		\node at (0,-.75) {$\scriptstyle 2$};
\end{tikzpicture}
	+2\:
\begin{tikzpicture}[baseline = -.5mm, scale=1.2, color=\clr]
		\draw[-,line width=1.5pt](0,-.45) to (0,-0.25);
		\draw[-,line width=1pt] (0,-.25) to [out=180,in=-180] (0,.25);
		\draw[-,line width=1pt] (0,-.25) to [out=0,in=0] (0,.25);
		\draw[-,line width=1.5pt](0,.45) to (0,0.25);
		\draw (-0.15,0) \wdot;
		\draw (0.15,0) \wdot;
		\node at (0,-.6) {$\scriptstyle 2$};
\end{tikzpicture}
	\overset{\eqref{super-interchange}}{=}-
\begin{tikzpicture}[baseline = -.5mm, scale=1.2, color=\clr]
		\draw[-,line width=1.5pt](0,.6) to (0,0.7);
		\draw[-,line width=1pt] (0,.1) to [out=180,in=-180] (0,.6);
		\draw[-,line width=1pt] (0,.1) to [out=0,in=0] (0,.6);
		\draw[-,line width=1.5pt](0,0) to (0,0.1);
		\draw[-,line width=1pt] (0,-.7) to [out=180,in=-180] (0,0);
		\draw[-,line width=1pt] (0,-.7) to [out=0,in=0] (0,0);
		\draw (-0.15,.35) \wdot;
		\draw (0.15,.35) \wdot;
		\draw (-0.14,-.1) \wdot;
		\draw (-0.18,-.3) \wdot;
		\draw (0.18,-.4) \wdot;
		\draw (0.14,-.6) \wdot;
		\draw[-,line width=1.5pt](0,-.7) to (0,-0.8);
		\node at (0,-.95) {$\scriptstyle 2$};
\end{tikzpicture}
	+2\:
\begin{tikzpicture}[baseline = -.5mm, scale=1.2, color=\clr]
		\draw[-,line width=1.5pt](0,-.45) to (0,-0.25);
		\draw[-,line width=1pt] (0,-.25) to [out=180,in=-180] (0,.25);
		\draw[-,line width=1pt] (0,-.25) to [out=0,in=0] (0,.25);
		\draw[-,line width=1.5pt](0,.45) to (0,0.25);
		\draw (-0.15,0) \wdot;
		\draw (0.15,0) \wdot;
		\node at (0,-.6) {$\scriptstyle 2$};
\end{tikzpicture}
	\overset{\eqref{equ:r=0}}{\underset{\eqref{doublewdot}}{=}}
	0 \:,
\]
 yields \eqref{equ:twowhite=0} since $2$ is invertible.
	For $a+b>2$, we have
\[
\begin{tikzpicture}[baseline = -.5mm, scale=1, color=\clr]
		\draw[-,line width=1.5pt](0,-.45) to (0,-0.25);
		\draw[-,line width=1pt] (0,-.25) to [out=180,in=-180] (0,.25);
		\draw[-,line width=1pt] (0,-.25) to [out=0,in=0] (0,.25);
		\draw[-,line width=1.5pt](0,.45) to (0,0.25);
		\draw (-0.15,0) \wdot;
		\draw (0.15,0) \wdot;
		\draw (-0.35,0) node{$\scriptstyle a$};
		\node at (0.35,0) {$\scriptstyle b$};
\end{tikzpicture}
	\overset{\eqref{equ:onewdot}}{=}
\begin{tikzpicture}[baseline = -.5mm, scale=1, color=\clr]
		\draw[-,line width=1.2pt](0,-.35) to (0,-0.25);
		\draw[-,line width=1pt] (0,-.25) to [out=180,in=-180] (0,.25);
		\draw[-,line width=1pt] (0,-.25) to [out=0,in=0] (0,.25);
		\draw[-,line width=1.2pt](0,.35) to (0,0.25);
		\draw (0.15,0) \wdot;
		\draw (-0.45,0) node{$\scriptstyle a-1$};
		\draw[-,line width=1.2pt](0.6,-.35) to (0.6,-0.25);
		\draw[-,line width=1pt] (0.6,-.25) to [out=180,in=-180] (0.6,.25);
		\draw[-,line width=1pt] (0.6,-.25) to [out=0,in=0] (0.6,.25);
		\draw[-,line width=1.2pt](0.6,.35) to (0.6,0.25);
		\draw (0.45,0) \wdot;
		\draw (1.05,0) node{$\scriptstyle b-1$};
		\draw[-,line width=1pt] (0,.35) to [out=90,in=-180] (0.3,.5);
		\draw[-,line width=1pt] (0.6,.35) to [out=90,in=0] (0.3,.5);
		\draw[-,line width=1pt] (0,-.35) to [out=-90,in=-180] (0.3,-.5);
		\draw[-,line width=1pt] (0.6,-.35) to [out=-90,in=0] (0.3,-.5);
		\draw[-,line width=1.2pt](0.3,-.5) to (0.3,-0.6);
		\draw[-,line width=1.2pt](0.3,.5) to (0.3,0.6);
\end{tikzpicture}
	\overset{\eqref{webassoc}}{=}
\begin{tikzpicture}[baseline = -.5mm, scale=.8, color=\clr]
		\draw[-,line width=1.2pt](0,-.35) to (0,-0.25);
		\draw[-,line width=1pt] (0,-.25) to [out=180,in=-180] (0,.25);
		\draw[-,line width=1pt] (0,-.25) to [out=0,in=0] (0,.25);
		\draw[-,line width=1.2pt](0,.35) to (0,0.25);
		\draw (-0.15,0) \wdot;
		\draw (0.15,0) \wdot;
		\draw[-,line width=1pt](0.4,-.35) to (0.4,0.35);
		\draw[-,line width=1pt] (0,.35) to [out=90,in=-180] (0.2,.5);
		\draw[-,line width=1pt] (0.4,.35) to [out=90,in=0] (0.2,.5);
		\draw[-,line width=1pt] (0,-.35) to [out=-90,in=180] (0.2,-.5);
		\draw[-,line width=1pt] (0.4,-.35) to [out=-90,in=0] (0.2,-.5);
		\draw[-,line width=1.2pt](0.2,-.5) to (0.2,-0.6);
		\draw[-,line width=1.2pt](0.2,.5) to (0.2,0.6);
		\draw[-,line width=1pt](-0.3,-.6) to (-0.3,0.6);
		\draw[-,line width=1pt] (-0.3,.6) to [out=90,in=180] (-0.05,.75);
		\draw[-,line width=1pt] (0.2,.6) to [out=90,in=0] (-0.05,.75);
		\draw[-,line width=1pt] (-.3,-.6) to [out=-90,in=180] (-0.05,-.75);
		\draw[-,line width=1pt] (0.2,-.6) to [out=-90,in=0] (-0.05,-.75);
		\draw[-,line width=1.2pt](0.05,.75) to (0.05,0.85);
		\draw[-,line width=1.2pt](0.05,-.75) to (0.05,-0.85);
\end{tikzpicture}
	=0 .\]
 The lemma is proved.
\end{proof}
	By assigning degree $a$ to $\bdota$ and $0$ to other generating morphisms, we define the degree of any dotted web diagram $D$ 
    as the sum of degrees of its local diagrams of $D$. 
    For any $\lambda,\mu\in\Lambda_{\text{st}}(m)$,  let $\Hom_{\QAW}(\lambda,\mu)_{\le k}$ (and respectively, $\Hom_{\QAW}(\lambda,\mu)_{< k}$) be the $\kk$-span of all dotted web diagram of type $\lambda\rightarrow \mu$ with degree $\le k$ (and respectively, $<k$). For any two dotted web diagrams $D$ and $D'$  in  $\Hom_{\QAW}(\lambda,\mu)_{\le k}$, we write 
\begin{align}  
\label{modLOT}
                	D\equiv D' \quad \text{ if } D=D'  \mod \Hom_{\QAW}(\lambda,\mu)_{< k}.
\end{align}

\begin{lemma}
\label{dotmovefreely1}
		The following $\equiv$-relations hold in $\QAW$
\begin{equation}
\label{dotmovecrossing+high}
\big)
, \text{ by repeating the previous step.}
\end{align*}
      
This lemma is proved.
	\end{proof}

\section{affine web category of type $Q$ over $\mathbb{C}$}
	\label{affineweboverC}	

In this section, $\kk$
 denotes an arbitrary field of characteristic zero, such as $\C$. Under this assumption on $\kk$, we will
show that $\qW$ is isomorphic to the web category of type $Q$ introduced by Brown and Kujawa \cite{BKu21} and 
significantly simplify the presentation of $\QAW$.

\subsection{$\qW$ over $\C$}
We recall the web category $\Qwb$ of type $Q$ over $\C$ \cite{BKu21}.
\begin{definition}
\label{defofbku}
\cite{BKu21}
 Suppose $\kk=\C$.   The web category $\mathfrak{q}\text{-}\mathbf{Web}$ of type $Q$ is the strict monoidal supercategory with generating  objects $a\in\mathbb{Z}_{\ge 1}$, and  generating morphisms 
\begin{align}
\label{merge+split+wdots over C}
\merge 
    		&:(a,b) \rightarrow (a+b),&
\splits
    &:(a+b)\rightarrow (a,b),&
\wdota
    &:(a)\rightarrow (a),      		
\end{align}
 subject to the relations \eqref{webassoc},\eqref{equ:r=0},\eqref{doublewdot} and   \eqref{wdot-move-split and merge over C}--\eqref{wdot-rung-relation_2}, for $a,b\in\mathbb{Z}_{\ge 1}$:
\begin{align} 
\label{wdot-move-split and merge over C}
\begin{tikzpicture}[baseline = -.5mm,scale=0.8,color=\clr]
    			\draw[-,line width=2pt] (0.08,-.5) to (0.08,0.04);
    			\draw[-,line width=1pt] (0.34,.5) to (0.08,0);
    			\draw[-,line width=1pt] (-0.2,.5) to (0.08,0);
    			\node at (-0.22,.7) {$\scriptstyle 1$};
    			\node at (0.36,.7) {$\scriptstyle b$};
    			\draw (0.08,-.2) \wdot;
\end{tikzpicture} 
    		=
\begin{tikzpicture}[baseline = -.5mm,scale=0.8,color=\clr]
    			\draw[-,line width=2pt] (0.08,-.5) to (0.08,0.04);
    			\draw[-,line width=1pt] (0.34,.5) to (0.08,0);
    			\draw[-,line width=1pt] (-0.2,.5) to (0.08,0);
    			\node at (-0.22,.7) {$\scriptstyle 1$};
    			\node at (0.36,.7) {$\scriptstyle b$};
    			\draw (-.05,.24) \wdot;
\end{tikzpicture}
    		+
\begin{tikzpicture}[baseline = -.5mm,scale=0.8,color=\clr]
    			\draw[-,line width=2pt] (0.08,-.5) to (0.08,0.04);
    			\draw[-,line width=1pt] (0.34,.5) to (0.08,0);
    			\draw[-,line width=1pt] (-0.2,.5) to (0.08,0);
    			\node at (-0.22,.7) {$\scriptstyle 1$};
    			\node at (0.36,.7) {$\scriptstyle b$};
    			\draw (.22,.24) \wdot;
\end{tikzpicture}, 
    \quad 
\begin{tikzpicture}[baseline = -.5mm, scale=0.8, color=\clr]
    			\draw[-,line width=1pt] (0.3,-.5) to (0.08,0.04);
    			\draw[-,line width=1pt] (-0.2,-.5) to (0.08,0.04);
    			\draw[-,line width=2pt] (0.08,.6) to (0.08,0);
    			\node at (-0.22,-.7) {$\scriptstyle 1$};
    			\node at (0.35,-.7) {$\scriptstyle b$};
    			\draw (0.08,.2) \wdot;
\end{tikzpicture}
    		=
\begin{tikzpicture}[baseline = -.5mm,scale=0.8, color=\clr]
    			\draw[-,line width=1pt] (0.3,-.5) to (0.08,0.04);
    			\draw[-,line width=1pt] (-0.2,-.5) to (0.08,0.04);
    			\draw[-,line width=2pt] (0.08,.6) to (0.08,0);
    			\node at (-0.22,-.7) {$\scriptstyle 1$};
    			\node at (0.35,-.7) {$\scriptstyle b$};
    			\draw (-.08,-.3) \wdot;
\end{tikzpicture}
    		+
\begin{tikzpicture}[baseline = -.5mm,scale=0.8, color=\clr]
    			\draw[-,line width=1pt] (0.3,-.5) to (0.08,0.04);
    			\draw[-,line width=1pt] (-0.2,-.5) to (0.08,0.04);
    			\draw[-,line width=2pt] (0.08,.6) to (0.08,0);
    			\node at (-0.22,-.7) {$\scriptstyle 1$};
    			\node at (0.35,-.7) {$\scriptstyle b$};
    			\draw (.22,-.3) \wdot;
    		\end{tikzpicture}\:,
\end{align} 
    	
\begin{align}
\label{mergesplit over C}
\begin{tikzpicture}[baseline = 7.5pt,scale=0.7, color=\clr]
    \draw[-,line width=1pt] (-.2,1.1) to (.08,.64) to (.34,1.1) to (.08,.64);
    \draw[-,line width=2pt] (.08,.66) to (.08,0.08);
    \draw[-,line width=1pt] (-.2,-.4) to (.08,0.1) to (.34,-.4) to (.08,0.1);
    \node at (-0.2,-.6) {$\scriptstyle 1$};
    \node at (0.34,-.6) {$\scriptstyle 1$};
\end{tikzpicture}
    		-
\begin{tikzpicture}[baseline = 7.5pt,scale=0.7, color=\clr]
    \draw[-,line width=1pt] (-.2,1.1) to (.08,.64) to (.34,1.1) to (.08,.64);
    \draw[-,line width=2pt] (.08,.66) to (.08,0.08);
    \draw[-,line width=1pt] (-.2,-.4) to (.08,0.1) to (.34,-.4) to (.08,0.1);
    \draw (-.06,.85) \wdot;
    \draw (.21,.85) \wdot;
    \draw (-.06,-.15) \wdot;
    \draw (.21,.-.15) \wdot;
    \node at (-0.2,-.6) {$\scriptstyle 1$};
    \node at (0.34,-.6) {$\scriptstyle 1$};
\end{tikzpicture}
    		=2
\begin{tikzpicture}[baseline = 7.5pt,scale=0.7, color=\clr]
    \draw[-,line width=1pt] (-.04,1.1) to (-.04,-.4) ;
    \draw[-,line width=1pt] (.3,1.1) to (.3,-.4) ;
    \node at (-.04,-.6) {$\scriptstyle 1$};
    \node at (.3,-.6) {$\scriptstyle 1$};
\end{tikzpicture}\:,
\end{align}	
    	
\begin{equation}
\label{rung-sawp one}
\begin{tikzpicture}[baseline = 2mm,scale=0.6,color=\clr]
    \draw[-,thick] (0,-.4) to (0,1.2);
    \draw[-,thick] (.024,-0.4) to (0.024,0) to (1.02,.25) to (1.02,.55)to (.024,.8) to (.024,1.2);
    \draw[-,line width=1.2pt] (1,-0.4) to (1,1.2);
    \node at (1,-.6) {$\scriptstyle b$};
    \node at (0,-.6) {$\scriptstyle a$};
    \node at (0.5,.9) {$\scriptstyle 1$};
    \node at (0.5,-.1) {$\scriptstyle 1$};
\end{tikzpicture}
    		-
\begin{tikzpicture}[baseline = 2mm,scale=0.6, color=\clr]
    \draw[-,line width=1.2pt] (0,-.4) to (0,1.2);
    \draw[-,thick] (0.98,-.4) to (0.98,0) to (-.02,.25) to (-.02,.55)to (.98,.8) to (.98,1.2);
    \draw[-,thin] (1,-.4) to (1,1.2);
    \node at (1,-.6) {$\scriptstyle b$};
    \node at (0,-.6) {$\scriptstyle a$};
    \node at (0.5,.9) {$\scriptstyle 1$};
    \node at (0.5,-.1) {$\scriptstyle 1$}; 
\end{tikzpicture}
    		=
    		(a-b)
\begin{tikzpicture}[baseline = 2mm,scale=0.6, color=\clr]
    \draw[-,line width=1.pt] (0,-.4) to (0,1.2);
    \draw[-,line width=1.pt] (.7,-.4) to (.7,1.2);
    \node at (.7,-.6) {$\scriptstyle b$};
    \node at (0,-.6) {$\scriptstyle a$};  
\end{tikzpicture}\:,
\end{equation} 
    	
\begin{equation}
\label{rung-sawp with wdots}
\begin{tikzpicture}[baseline = 2mm,scale=0.6,color=\clr]
    \draw[-,thick] (0,-.4) to (0,1.2);
    \draw[-,thick] (.024,-0.4) to (0.024,0) to (1.02,.25) to (1.02,.55)to (.024,.8) to (.024,1.2);
    \draw[-,line width=1.2pt] (1,-0.4) to (1,1.2);
    \node at (1,-.6) {$\scriptstyle b$};
    \node at (0,-.6) {$\scriptstyle a$};
    \node at (0.5,1) {$\scriptstyle 1$};
    \node at (0.5,-.1) {$\scriptstyle 1$};
    \draw (.5,.675) \wdot;
\end{tikzpicture}
    		-
\begin{tikzpicture}[baseline = 2mm,scale=0.6, color=\clr]
    \draw[-,line width=1.2pt] (0,-.4) to (0,1.2);
    \draw[-,thick] (0.98,-.4) to (0.98,0) to (-.02,.25) to (-.02,.55)to (.98,.8) to (.98,1.2);
    \draw[-,thin] (1,-.4) to (1,1.2);
    \node at (1,-.6) {$\scriptstyle b$};
    \node at (0,-.6) {$\scriptstyle a$};
    \node at (0.5,.9) {$\scriptstyle 1$};
    \node at (0.5,-.2) {$\scriptstyle 1$};
    \draw (.5,.1) \wdot;    
\end{tikzpicture}
    		=
\begin{tikzpicture}[baseline = 2mm,scale=0.6, color=\clr]
    \draw[-,line width=1.pt] (0,-.4) to (0,1.2);
    \draw[-,line width=1.pt] (.7,-.4) to (.7,1.2);
    \node at (.7,-.6) {$\scriptstyle b$};
    \node at (0,-.6) {$\scriptstyle a$};  
    \draw (0,.5) \wdot;     
\end{tikzpicture}
    		-
\begin{tikzpicture}[baseline = 2mm,scale=0.6, color=\clr]
    \draw[-,line width=1.pt] (0,-.4) to (0,1.2);
    \draw[-,line width=1.pt] (.7,-.4) to (.7,1.2);
    \node at (.7,-.6) {$\scriptstyle b$};
    \node at (0,-.6) {$\scriptstyle a$};   
    \draw (0.7,.5) \wdot; 
\end{tikzpicture}
    		=
\begin{tikzpicture}[baseline = 2mm,scale=0.6,color=\clr]
    \draw[-,thick] (0,-.4) to (0,1.2);
    \draw[-,thick] (.024,-0.4) to (0.024,0) to (1.02,.25) to (1.02,.55)to (.024,.8) to (.024,1.2);
    \draw[-,line width=1.2pt] (1,-0.4) to (1,1.2);
    \node at (1,-.6) {$\scriptstyle b$};
    \node at (0,-.6) {$\scriptstyle a$};
    \node at (0.5,1) {$\scriptstyle 1$};
    \node at (0.5,-.2) {$\scriptstyle 1$};
    \draw (.5,.1) \wdot;
\end{tikzpicture}
    		-
\begin{tikzpicture}[baseline = 2mm,scale=0.6, color=\clr]
    \draw[-,line width=1.2pt] (0,-.4) to (0,1.2);
    \draw[-,thick] (0.98,-.4) to (0.98,0) to (-.02,.25) to (-.02,.55)to (.98,.8) to (.98,1.2);
    \draw[-,thin] (1,-.4) to (1,1.2);
    \node at (1,-.6) {$\scriptstyle b$};
    \node at (0,-.6) {$\scriptstyle a$};
    \node at (0.5,1) {$\scriptstyle 1$};
    \node at (0.5,-.1) {$\scriptstyle 1$};  
    \draw (.5,.675) \wdot;  
\end{tikzpicture}\:,
\end{equation} 
    	
\begin{equation}
\label{wdot-rung-relation_1}
\begin{tikzpicture}[baseline = -1mm,scale=0.6,color=\clr] 
    \draw[-,line width=1.2pt] (0,-0.6) to (0,1);
    \draw[-,line width=1.2pt] (0.8,-0.6) to (0.8,1.);
    \draw[-,line width=1.2pt] (1.6,-0.6) to (1.6,1.);
    \draw[-,line width=1.2pt] (0,.8) to (.8,.4);
    \draw[-,line width=1.2pt] (.8,0) to (1.6,-.4);
    \node at (0,-.8) {$\scriptstyle a$};
    \node at (.8,-.8) {$\scriptstyle b$};
    \node at (1.6,-.8) {$\scriptstyle c$};
    \node at (.4,.2) {$\scriptstyle 1$};
    \node at (1.2,.1) {$\scriptstyle 1$};
\end{tikzpicture}
    		-~
\begin{tikzpicture}[baseline = -1mm,scale=0.6,color=\clr]
    \draw[-,line width=1.2pt] (0,-0.6) to (0,1);
    \draw[-,line width=1.2pt] (0.8,-.6) to (0.8,1);
    \draw[-,line width=1.2pt] (1.6,-.6) to (1.6,1);
    \draw[-,line width=1.2pt] (0,0) to (.8,-.4);
    \draw[-,line width=1.2pt] (.8,.8) to (1.6,.4);
    \node at (0,-.8) {$\scriptstyle a$};
    \node at (.8,-.8) {$\scriptstyle b$};
    \node at (1.6,-.8) {$\scriptstyle c$};
    \node at (.4,.1) {$\scriptstyle 1$};
    \node at (1.2,.2) {$\scriptstyle 1$};
\end{tikzpicture}
    		=~
\begin{tikzpicture}[baseline = -1mm,scale=0.6,color=\clr]
    \draw[-,line width=1.2pt] (0,-0.6) to (0,1);
    \draw[-,line width=1.2pt] (0.8,-0.6) to (0.8,1);
    \draw[-,line width=1.2pt] (1.6,-0.6) to (1.6,1);
    \draw[-,line width=1.2pt] (0,.8) to (.8,.4);
    \draw[-,line width=1.2pt] (.8,0) to (1.6,-.4);
    \draw(.4,.6) \wdot;
    \draw(1.2,-.2) \wdot;
    \node at (0,-.8) {$\scriptstyle a$};
    \node at (.8,-.8) {$\scriptstyle b$};
    \node at (1.6,-.8) {$\scriptstyle c$};
    \node at (1.2,.1) {$\scriptstyle 1$};
    \node at (.4,.2) {$\scriptstyle 1$};
\end{tikzpicture}
    		+~
\begin{tikzpicture}[baseline = -1mm,scale=0.6,color=\clr]
    \draw[-,line width=1.2pt] (0,-0.6) to (0,1);
    \draw[-,line width=1.2pt] (0.8,-.6) to (0.8,1);
    \draw[-,line width=1.2pt] (1.6,-.6) to (1.6,1);
    \draw[-,line width=1.2pt] (0,0) to (.8,-.4);
    \draw[-,line width=1.2pt] (.8,.8) to (1.6,.4);
    \draw(.4,-.2) \wdot;
    \draw(1.2,.6) \wdot;
    \node at (0,-.8) {$\scriptstyle a$};
    \node at (.8,-.8) {$\scriptstyle b$};
    \node at (1.6,-.8) {$\scriptstyle c$};
    \node at (1.2,.2) {$\scriptstyle 1$};
    \node at (.4,.1) {$\scriptstyle 1$};
\end{tikzpicture}\:,
\end{equation}
    	
\begin{equation}
\label{wdot-rung-relation_2}
\begin{tikzpicture}[baseline = -1mm,scale=0.6,color=\clr] 
    \draw[-,line width=1.2pt] (0,-0.6) to (0,1);
    \draw[-,line width=1.2pt] (0.8,-0.6) to (0.8,1.);
    \draw[-,line width=1.2pt] (1.6,-0.6) to (1.6,1.);
    \draw[-,line width=1.2pt] (0,.8) to (.8,.4);
    \draw[-,line width=1.2pt] (.8,0) to (1.6,-.4);
    \draw(1.2,-.2) \wdot;
    \node at (0,-.8) {$\scriptstyle a$};
    \node at (.8,-.8) {$\scriptstyle b$};
    \node at (1.6,-.8) {$\scriptstyle c$};
    \node at (1.2,.1) {$\scriptstyle 1$};
    \node at (.4,.4) {$\scriptstyle 1$};
\end{tikzpicture}
    		-~
\begin{tikzpicture}[baseline = -1mm,scale=0.6,color=\clr]
    \draw[-,line width=1.2pt] (0,-0.6) to (0,1);
    \draw[-,line width=1.2pt] (0.8,-.6) to (0.8,1);
    \draw[-,line width=1.2pt] (1.6,-.6) to (1.6,1);
    \draw[-,line width=1.2pt] (0,0) to (.8,-.4);
    \draw[-,line width=1.2pt] (.8,.8) to (1.6,.4);
    \draw(1.2,.6) \wdot;
    \node at (0,-.8) {$\scriptstyle a$};
    \node at (.8,-.8) {$\scriptstyle b$};
    \node at (1.6,-.8) {$\scriptstyle c$};
    \node at (1.2,.2) {$\scriptstyle 1$};
    \node at (.4,.1) {$\scriptstyle 1$};
\end{tikzpicture}
    		=~
\begin{tikzpicture}[baseline = -1mm,scale=.6,color=\clr]
    \draw[-,line width=1.2pt] (0,-0.6) to (0,1);
    \draw[-,line width=1.2pt] (0.8,-0.6) to (0.8,1);
    \draw[-,line width=1.2pt] (1.6,-0.6) to (1.6,1);
    \draw[-,line width=1.2pt] (0,.8) to (.8,.4);
    \draw[-,line width=1.2pt] (.8,0) to (1.6,-.4);
    \draw(.4,.6) \wdot;			
    \node at (0,-.8) {$\scriptstyle a$};
    \node at (.8,-.8) {$\scriptstyle b$};
    \node at (1.6,-.8) {$\scriptstyle c$};
    \node at (1.2,.1) {$\scriptstyle 1$};
    \node at (.4,.2) {$\scriptstyle 1$};
\end{tikzpicture}
    		-~
\begin{tikzpicture}[baseline = -1mm,scale=0.6,color=\clr]
    \draw[-,line width=1.2pt] (0,-0.6) to (0,1);
    \draw[-,line width=1.2pt] (0.8,-.6) to (0.8,1);
    \draw[-,line width=1.2pt] (1.6,-.6) to (1.6,1);
    \draw[-,line width=1.2pt] (0,0) to (.8,-.4);
    \draw[-,line width=1.2pt] (.8,.8) to (1.6,.4);
    \draw(.4,-.2) \wdot;
    \node at (0,-.8) {$\scriptstyle a$};
    \node at (.8,-.8) {$\scriptstyle b$};
    \node at (1.6,-.8) {$\scriptstyle c$};
    \node at (.4,.2) {$\scriptstyle 1$};
    \node at (1.2,.3) {$\scriptstyle 1$};
\end{tikzpicture}\:.
\end{equation}    	
The $\mathbb{Z}_2$-grading is given by declaring $	
\begin{tikzpicture}[baseline = 3pt, scale=0.5, color=\clr]
    \draw[-,line width=2pt] (0,0) to[out=up, in=down] (0,1.4);
    \draw(0,0.6) \wdot; 
    \node at (0,-.3) {$\scriptstyle a$};
\end{tikzpicture}  
    	$ to have parity $\bar{1}$ and other generating morphisms to have  parity $\bar{0}$.
\end{definition}

\begin{theorem}
\label{isom-qWeb-overc}
Suppose $\kk=\C$. Then $\qW$ is isomorphic to $\mathfrak{q}\text{-}\mathbf{Web}$.
\end{theorem}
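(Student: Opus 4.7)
The plan is to construct a pair of mutually inverse strict monoidal superfunctors $F:\qW\to\Qwb$ and $G:\Qwb\to\qW$, both acting as the identity on objects and on the common generators $\merge$, $\splits$, and $\wdota$.

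The functor $G$ is defined by sending each generator of $\Qwb$ to the generator of $\qW$ with the same diagrammatic symbol. To check well-definedness, I would verify that each defining relation of $\Qwb$ holds in $\qW$. The relations \eqref{webassoc}, \eqref{equ:r=0} and \eqref{doublewdot} are shared verbatim, and \eqref{wdot-move-split and merge over C} is the thickness-one case of \eqref{wdot-move-split and merge}. Each of the remaining relations \eqref{mergesplit over C}, \eqref{rung-sawp one}, \eqref{rung-sawp with wdots}, \eqref{wdot-rung-relation_1}, \eqref{wdot-rung-relation_2} concerns a $1$-thin ``rung'' in an otherwise arbitrary diagram; after expanding the merge-then-split H-pattern via \eqref{mergesplit} (which writes it as an identity plus the crossing generator $\crossing$ together with lower-order contributions), the identities follow by sliding the surviving white dots through $\crossing$ using \eqref{wdotsmovecrossing} and reassembling with \eqref{wdot-move-split and merge}, \eqref{equ:onewdot}, and \eqref{doublewdot}.

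For the functor $F$, the non-trivial point is that the crossing generator of $\qW$ has no counterpart in $\Qwb$, so we must define $F(\crossing)$. Specialising \eqref{mergesplit} to $a=b=c=d=1$ solves the $(1,1)$-crossing as an H-shape minus the identity, an expression manifestly in the image of $\Qwb$'s generators; for general thicknesses, \eqref{mergesplit} gives a linear system whose coefficients (binomials, invertible over a field of characteristic zero) allow $\crossing:(a,b)\to(b,a)$ to be expressed as an alternating sum of ladder diagrams, and we set $F(\crossing)$ equal to this ladder expression. We then verify that all relations of $\qW$ involving $\crossing$ hold for this defined morphism: \eqref{symmetric+braid} and \eqref{sliders} are classical consequences of the type-$A$ web presentation of \cite{CKM,BEEO}; the new relations \eqref{wdotsmovecrossing} and \eqref{equ:onewdot} are derived by expanding $F(\crossing)$ as ladders, distributing white dots using \eqref{wdot-move-split and merge over C}, and applying the rung-swap identities \eqref{rung-sawp one}--\eqref{wdot-rung-relation_2} of $\Qwb$.

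Finally, $GF$ and $FG$ are the identity on each generator they directly meet; the one remaining check is $GF(\crossing)=\crossing$ in $\qW$, which is precisely the content of \eqref{mergesplit}. The hardest step is the last verification in the definition of $F$: showing that the ladder-defined morphism $F(\crossing)$ in $\Qwb$ satisfies \eqref{wdotsmovecrossing} and \eqref{equ:onewdot}. This requires a delicate combinatorial resummation in which the numerical coefficients appearing in the rung-swap relations of $\Qwb$ (such as the $2$ in \eqref{mergesplit over C} and the linear factor $a-b$ in \eqref{rung-sawp one}) must be inverted, and it is precisely at this step that the characteristic-zero hypothesis on $\kk$ plays an essential role.
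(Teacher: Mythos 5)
Your proposal follows essentially the same route as the paper's proof: a pair of mutually inverse strict monoidal superfunctors that act as the identity on the shared generators, with well-definedness of the functor out of $\Qwb$ reduced to verifying the rung relations \eqref{mergesplit over C}--\eqref{wdot-rung-relation_2} inside $\qW$ (the paper's Lemma \ref{remaining relation in QWeb}), and the other direction handled by expressing the crossing as a morphism of $\Qwb$ and checking the $\qW$-relations against it (the paper's Lemma \ref{lem:wellofbeta}). The only cosmetic difference is that you define the image of the crossing directly as the alternating ladder sum, whereas the paper first takes the $\tfrac{1}{a!b!}$-normalized thin-strand expansion \eqref{crossgen} from \cite{BKu21} and then proves in \eqref{def-crossing-by-rung} that it equals that ladder sum, so the two definitions produce the same functor.
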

\begin{proof}
We will construct two natural functors $\alpha: \Qwb\rightarrow \qW$ and its inverse $\beta$.
The functor $\alpha$ is defined by mapping the generating objects and generating morphisms to the same notation in $\qW$.
To show $\alpha$ is well-defined, it suffices to verify 
the relations \eqref{wdot-move-split and merge over C}--\eqref{wdot-rung-relation_2} hold in $\qW$. Specifically, the relation \eqref{wdot-move-split and merge over C} is the special case of \eqref{wdot-move-split and merge}, while  the  relations 
\eqref{mergesplit over C}--\eqref{wdot-rung-relation_2} are verified in the lemma \ref{remaining relation in QWeb}. 

To define $\beta$, we need the  crossings defined in $\Qwb$ \cite{BKu21}.
For the thin crossing, we define
\begin{equation}
\label{crossing-11}
\begin{tikzpicture}[baseline = 2mm,scale=0.6, color=\clr]
    \draw[-,thick] (0,0) to (.6,1);
    \draw[-,thick] (0,1) to (.6,0);
    \node at (0,-.2) {$\scriptstyle 1$};
    \node at (0.6,-0.2) {$\scriptstyle 1$};
\end{tikzpicture}:
    	 =
\begin{tikzpicture}[baseline = 2mm,scale=0.6, color=\clr]
    \draw[-,thick] (0,0) to (.28,.3) to (.28,.7) to (0,1);
    \draw[-,thick] (.6,0) to (.31,.3) to (.31,.7) to (.6,1);
    \node at (0,-.2) {$\scriptstyle 1$};
    \node at (0.63,-.2) {$\scriptstyle 1$};
\end{tikzpicture}
    	 	-
\begin{tikzpicture}[baseline = 2mm, scale=0.6, color=\clr]
    \draw[-,thick] (0,0)to[out=up,in=down](0,1);
    \draw[-,thick] (0.5,0)to[out=up,in=down](0.5,1); 
    \node at (0,-.2) {$\scriptstyle 1$};
    \node at (0.5,-0.2) {$\scriptstyle 1$};
\end{tikzpicture}.	 	  
\end{equation}
The thick crossing is defined as 
\cite[Definition 4.5.1]{BKu21}:
\begin{equation}
\label{crossgen}
\begin{tikzpicture}[baseline = 2mm,scale=0.8, color=\clr]
    \draw[-,line width=1.2pt] (0,-.2) to (.6,.8);
    \draw[-,line width=1.2pt] (0,.8) to (.6,-.2);
    \node at (0,-.3) {$\scriptstyle a$};
    \node at (0.6,-0.3) {$\scriptstyle b$};
    \node at (0,.9) {$\scriptstyle b$};
    \node at (0.6,.9) {$\scriptstyle a$};
\end{tikzpicture}
:=\frac{1}{a!b!}
\begin{tikzpicture}[baseline = 2mm,scale=.25, color=\clr]
    \draw[-,line width=1.2pt] (0, 3.75) to (0,4.5);
    \draw[-,line width=1.2pt] (1,2) to [out=90,in=330] (0,3.75);
    \draw[-,line width=1.2pt] (-1,2) to [out=90,in=210] (0,3.75);		
    \draw[-,line width=1.2pt] (4, 3.75) to (4,4.5);
    \draw[-,line width=1.2pt] (5,2) to [out=90,in=330] (4,3.75);
    \draw[-,line width=1.2pt] (3,2) to [out=90,in=210] (4,3.75);
    \draw[-,line width=1.2pt] (0,-3.5) to (0,-2.75);
    \draw[-,line width=1.2pt] (0,-2.75) to [out=30,in=270] (1,-1);
    \draw[-,line width=1.2pt] (0,-2.75) to [out=150,in=270] (-1,-1); 
    \draw[-,line width=1.2pt] (4,-3.5) to (4,-2.75);
    \draw[-,line width=1.2pt] (4,-2.75) to [out=30,in=270] (5,-1);
    \draw[-,line width=1.2pt] (4,-2.75) to [out=150,in=270] (3,-1); 
    \draw[-,line width=1.2pt] (5,2) to (1,-1);
    \draw[-,line width=1.2pt] (3,2) to (-1,-1);
    \draw[-,line width=1.2pt] (1,2) to (5,-1);
    \draw[-,line width=1.2pt] (-1,2) to (3,-1);
    \node at (2.6, 2.7) {\scriptsize $1$};
    \node at (5.4, 2.7) {\scriptsize $1$};
    \node at (1.4, 2.7) {\scriptsize $1$};
    \node at (-1.4, 2.7) {\scriptsize $1$};
    \node at (2.6, -1.7) {\scriptsize $1$};
    \node at (5.4, -1.7) {\scriptsize $1$};
    \node at (1.4, -1.7) {\scriptsize $1$};
    \node at (-1.4, -1.7) {\scriptsize $1$};
    \node at (0.1, 2.35) { $\cdots$};
    \node at (4.1, 2.35) { $\cdots$};
    \node at (0.1, -1.6) { $\cdots$};
    \node at (4.1, -1.6) { $\cdots$};
    \node at (0,5) {\scriptsize $b$};
    \node at (4,5) {\scriptsize $a$};
    \node at (0,-4) {\scriptsize $a$};
    \node at (4,-4) {\scriptsize $b$};
\end{tikzpicture}.
\end{equation}
We define the functor $\beta: \qW\rightarrow\Qwb$ by sending the generating objects, merges, splits, and white dots to the same notation in $\Qwb$.
Then $\alpha\circ \beta$ and $\beta\circ \alpha$ fix all generator and it follows that   $\alpha\circ\beta=id_{\Qwb}, \beta\circ\alpha=id_{\qW}$. To complete the proof, 
it remains to verify that  $\beta$ is well-defined, which is provided in Lemma \ref{lem:wellofbeta}.
\end{proof}
\begin{lemma}
\label{remaining relation in QWeb}
    The relations \eqref{mergesplit over C}--\eqref{wdot-rung-relation_2} hold in $\qW$.
    \end{lemma}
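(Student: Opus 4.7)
The idea is to verify each of the relations \eqref{mergesplit over C}, \eqref{rung-sawp one}, \eqref{rung-sawp with wdots}, \eqref{wdot-rung-relation_1}, \eqref{wdot-rung-relation_2} inside $\qW$ one at a time using its defining relations. The common toolkit is the type~$A$ square relation \eqref{mergesplit} (which lives in $\W\subseteq\qW$) together with the white-dot sliding relation \eqref{wdot-move-split and merge}, the square-dot relation \eqref{doublewdot}, the balloon relations \eqref{equ:r=0}/\eqref{equ:onewdot}, and the white-dot-crossing rules \eqref{wdotsmovecrossing}. I will also freely use the already-established auxiliary lemma \ref{killwdot} giving the annihilation \eqref{equ:twowhite=0}.

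For \eqref{mergesplit over C}, I specialize \eqref{mergesplit} to $a=b=c=d=1$, which (together with \eqref{equ:r=0}) rewrites the first diagram as two parallel thin strands plus a merge--split passing through a thickness-$2$ bar. For the second diagram I slide each of the four white dots through the merge and split vertices using \eqref{wdot-move-split and merge}, collecting them onto the thick middle strand. Repeatedly applying \eqref{doublewdot} collapses squared dots into scalars and the cross terms vanish by \eqref{equ:twowhite=0}. Subtracting, the merge--split contributions cancel and the surviving scalar is exactly $2$ times the parallel strands.

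For \eqref{rung-sawp one}, this is the classical ladder commutator relation already present in the type~$A$ web category: applying \eqref{mergesplit} to each of the two ladders expands them as sums of internal crossing diagrams indexed by pairs $(s,t)$, and after subtracting and cleaning up with associativity \eqref{webassoc}, \eqref{swallows}, and \eqref{sliders}, the overwhelming majority of terms cancel in pairs, leaving $(a-b)$ copies of two parallel strands (compare \cite{CKM,BEEO}). Since $\W$ is a subcategory of $\qW$, this identity transfers verbatim.

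For the three remaining white-dotted rung relations, the strategy is uniform: insert the white dot into the ladder diagrams and then slide it across every merge/split in sight via \eqref{wdot-move-split and merge}, so that the computation reduces to a sum of white-dot-free ladder pieces (which are governed by \eqref{rung-sawp one}) plus residual diagrams carrying pairs of white dots on thin strands. The latter vanish or simplify by \eqref{doublewdot}, \eqref{equ:onewdot}, and \eqref{equ:twowhite=0}. The main obstacle is bookkeeping in \eqref{wdot-rung-relation_2}, where a white dot sits on an interior rung: sliding it through the split/merge produces two families of terms, one matching the two dotted diagrams on the right-hand side and one matching the dot placements that must be rearranged using \eqref{wdotsmovecrossing} and the super-interchange law \eqref{super-interchange} to get the correct sign. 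Once the cancellations are tracked carefully, the claimed equalities fall out.
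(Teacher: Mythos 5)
Your overall strategy -- verify each relation separately using \eqref{mergesplit}, \eqref{wdot-move-split and merge}, \eqref{doublewdot}, \eqref{wdotsmovecrossing}, the type~$A$ rung relation, and the super-interchange law -- is the same as the paper's, and your treatment of \eqref{rung-sawp one} (a special case of the type~$A$ ladder relation \eqref{rung-sawp}, valid since $\W\subseteq\qW$) is correct. However, your proposed mechanism for the key relation \eqref{mergesplit over C} has a genuine gap. First, \eqref{wdot-move-split and merge} converts a \emph{single} white dot on the thick strand into a \emph{sum} of dots on the legs; it does not let you "collect" a \emph{product} of four leg dots onto the thick strand in one move, and iterating it introduces sign corrections (the white dots are odd) that you have not tracked. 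Second, the cancellation you attribute to \eqref{equ:twowhite=0} cannot occur: that relation kills two white dots sitting on the legs of a split-then-merge \emph{bubble}, whereas the four-dot diagram in \eqref{mergesplit over C} is a merge-then-split with no bubble anywhere, so \eqref{equ:twowhite=0} simply does not apply to any subdiagram produced by your manipulations.

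The ingredient your sketch is missing is the one that actually produces the $-2$: one first expands the undotted merge-split as $\mathrm{id}+\text{crossing}$ via \eqref{mergesplit}, applies the same expansion to the four-dot diagram, and then uses \eqref{wdotsmovecrossing} together with \eqref{super-interchange} to slide the two bottom dots upward until they meet the two top dots. On the crossing component the dots end up on \emph{different} strands before pairing and collapse by \eqref{doublewdot} with coefficient $+1$; on the identity component the two odd dots on each strand must be transposed past each other, producing a factor $-1$ before \eqref{doublewdot} collapses them. This yields (four-dot diagram) $=$ crossing $-$ id $=$ (merge-split) $- 2\,$id, which is \eqref{mergesplit over C}. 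Without this parity sign your computation would give crossing $+$ id and the relation would fail. The remaining dotted rung relations \eqref{rung-sawp with wdots}--\eqref{wdot-rung-relation_2} are handled in your plan essentially as in the paper (slide the dot with \eqref{wdot-move-split and merge}, reduce to \eqref{rung-sawp}, clean up with \eqref{equ:onewdot} and \eqref{wdotsmovecrossing}), but note that \eqref{rung-sawp with wdots} involves only a single white dot, so no "pairs of white dots on thin strands" arise there; the pairing phenomenon only enters in \eqref{wdot-rung-relation_1}.
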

\begin{proof}
    	The relations \eqref{webassoc}--\eqref{equ:r=0} implies the rung relation \cite[(4.26)]{BEEO}: 
\begin{equation}\label{rung-sawp}
\:.
\end{align*}
     Finally, \eqref{wdot-rung-relation_2} can be checked in a similar way.  
    \end{proof}

Using \eqref{webassoc}, \eqref{equ:r=0}, \cite[Lemma 4.7]{BKu21}, and the definition of crossings in \eqref{crossing-11}--\eqref{crossgen}, 
we can prove that  \eqref{swallows} holds in $\Qwb$  by induction on $a+b$. Additionally, the relations \eqref{sliders} and \eqref{rung-sawp} hold in $\Qwb$ by \cite[Lemma 4.12, Lemma 4.4]{BKu21}. 
 In $\W$,  crossings can be expressed with merges and splits by \cite[(4.36)]{BEEO}, the following lemma shows we have the same formula in $\Qwb$.

\begin{lemma} 
In $\Qwb$, we have
\begin{equation}
\label{def-crossing-by-rung}
\begin{tikzpicture}[baseline = 2mm,scale=1, color=\clr]
    \draw[-,line width=1.2pt] (0,0) to (.6,1);
    \draw[-,line width=1.2pt] (0,1) to (.6,0);
    \node at (0,-.1) {$\scriptstyle a$};
    \node at (0.6,-0.1) {$\scriptstyle b$};
\end{tikzpicture}
=\sum_{t=0}^{\min(a,b)}
    		(-1)^t
\begin{tikzpicture}[baseline = 2mm,scale=1, color=\clr]
    \draw[-,line width=1.2pt] (0,0) to (0,1);
    \draw[-,thick] (0.8,0) to (0.8,.2) to (.03,.4) to (.03,.6)to (.8,.8) to (.8,1);
    \draw[-,thin] (0.82,0) to (0.82,1);
    \node at (0.81,-.1) {$\scriptstyle b$};
    \node at (0,-.1) {$\scriptstyle a$};
    \node at (1,.5) {$\scriptstyle t$};
\end{tikzpicture}.
\end{equation}
\end{lemma}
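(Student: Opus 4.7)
The strategy is to adapt the proof of the analogous identity \cite[(4.36)]{BEEO} for $\W$ to the setting of $\Qwb$, using only the relations already available in $\Qwb$: the axioms \eqref{webassoc}, \eqref{equ:r=0}, \eqref{mergesplit over C}, \eqref{rung-sawp one} from Definition \ref{defofbku}, together with the derived relations \eqref{swallows}, \eqref{sliders}, and the rung relation \eqref{rung-sawp} that the text notes are valid in $\Qwb$. The proof proceeds by induction on $\min(a,b)$, with the thick crossing understood via its definition \eqref{crossgen} in terms of thin crossings, and each thin crossing expanded via \eqref{crossing-11}.

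For the base case $\min(a,b) = 1$, I would assume without loss of generality that $b = 1$. Then \eqref{crossgen} expresses $\crossing_{a,1}$ as $\tfrac{1}{a!}$ times a composition in which the single thin strand from $b=1$ braids past $a$ thin strands obtained from splitting $a$. Using \eqref{crossing-11} to rewrite each thin crossing as $\text{(split--merge)} - \text{identity}$, expanding into the resulting $2^a$ terms, and applying \eqref{webassoc} and \eqref{equ:r=0} to collapse adjacent merges and splits, I expect the $2^a$ terms to collapse to the two-term alternating sum $\text{rung}_0 - \text{rung}_1$ claimed on the right-hand side.

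For the inductive step, assume the formula for all $(a',b')$ with $\min(a',b') < \min(a,b)$, and WLOG $b \leq a$. Split $b = (b-1) + 1$; using \eqref{crossgen} together with \eqref{webassoc}, \eqref{swallows}, and \eqref{sliders}, factor $\crossing_{a,b}$ as a composition involving $\crossing_{a,b-1}$ and $\crossing_{a,1}$ joined by appropriate merges and splits. Apply the inductive hypothesis to each factor to obtain a double alternating sum of rung diagrams, and then use \eqref{rung-sawp} to merge adjacent rungs. The collapse of the double sum into the single sum in \eqref{def-crossing-by-rung} rests on a Vandermonde-type binomial identity of the form
\[
\sum_{s+u=t}(-1)^{s+u}\binom{\alpha}{s}\binom{\beta}{u} \;=\; (-1)^t\binom{\alpha+\beta}{t},
\]
which produces the required sign $(-1)^t$.

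The main obstacle is the combinatorial bookkeeping in the inductive step, specifically verifying that the binomial coefficients produced by iterated application of \eqref{rung-sawp} to the inductive double sum collapse via the Vandermonde identity to the single alternating sum with coefficients $(-1)^t$. This calculation is essentially identical to the one performed for $\W$ in \cite[(4.36)]{BEEO}, so the real content is checking that every diagrammatic manipulation in that argument is available in $\Qwb$ under its more restrictive list of axioms---which, given the rung relation and slider/swallow identities already established, should indeed be the case.
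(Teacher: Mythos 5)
Your inductive step is essentially the paper's mechanism (peel a thin strand off one of the two inputs using $\binom{n}{1}\,\mathrm{id}_n = \text{merge}\circ\text{split}$ from \eqref{equ:r=0} together with \eqref{sliders}/\eqref{swallows}, apply the induction hypothesis to the two smaller crossings, then recombine the resulting double sum of rungs via \eqref{rung-sawp} and a binomial collapse), so that part is sound, although the paper organizes the induction on $a+b$ rather than on $\min(a,b)$ and its final binomial bookkeeping uses the observation $0\le s-t\le 1$ rather than a clean Vandermonde identity — a cosmetic difference.

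The genuine gap is in your base case. You take $\min(a,b)=1$ as the base and propose to verify $\crossing_{a,1}$ by expanding the defining formula \eqref{crossgen} directly: substitute \eqref{crossing-11} into each of the $a$ thin crossings and claim the resulting $2^a$ terms "collapse" to $\mathrm{rung}_0-\mathrm{rung}_1$. That collapse is precisely the content of the lemma in this case, and no mechanism is given for it. The terms in which merge–splits occur at several different positions (e.g.\ $(\cup\cap\otimes 1)\circ(1\otimes\cup\cap)$ already for $a=2$) do not simplify by \eqref{webassoc} and \eqref{equ:r=0} alone; they require the square-switch relation \eqref{mergesplit over C}/\eqref{mergesplit}, and one must then track how the $1/a!$ normalization cancels against the multiplicities produced. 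Carried out honestly, this computation is itself an induction on $a$ — which is exactly what the paper does instead: its base case is only $a=b=1$ (where \eqref{crossing-11} is the definition), and both the "$a=1$, $b\ge 2$" and "$a\ge 2$" cases are handled by the same peel-off-a-thin-strand induction, so that the full expansion of \eqref{crossgen} for thick strands is never needed. To repair your argument, either restrict the base case to $a=b=1$ and fold $\crossing_{a,1}$ into the inductive machinery (splitting $a=1+(a-1)$), or supply the missing induction on $a$ inside your base case.
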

\begin{proof}
We proceed     by induction on $ a+b$. 
    	Suppose $a=1$. The base case for  $a=b=1$ is just \eqref{crossing-11}. For $b\ge 2$, we have
\begin{equation}
\label{crossrung1b}
\begin{aligned}
    		b
\begin{tikzpicture}[baseline = 7.5pt, scale=0.4, color=\clr]
    \draw[-,line width=1.2pt] (0,-.2) to  (1,2.2);
    \draw[-,line width=1.2pt] (1,-.2) to  (0,2.2);
    \node at (0, -.5) {$\scriptstyle 1$};
    \node at (1, -.5) {$\scriptstyle b$};
\end{tikzpicture}
&\overset{\eqref{equ:r=0}}{\underset{\eqref{sliders}}{=}}
\begin{tikzpicture}[baseline = 7.5pt, scale=0.4, color=\clr]
    \draw[-,line width=1.2pt](0,2.2) to (0.3,1.9);
    \draw[-,thick](0.3,1.9) to[out=-80,in=135](1.5,.5) to[out=-45,in=45](2,.5);
    \draw[-,thick](0.3,1.9) to[out=45,in=135](1.5,1.5) to[out=-45,in=45] (2,.5);
    \draw[-,line width=1.2pt](2,.5) to (2.5,0);
    \draw[-,line width=1pt](0,0) to (2.5,2.2);
    \node at (1.5, .2) {$\scriptstyle 1$};
    \node at (0, -.5) {$\scriptstyle 1$};
    \node at (2.5, -.5) {$\scriptstyle b$};
\end{tikzpicture}
    		=
\begin{tikzpicture}[baseline = 7.5pt, scale=0.4, color=\clr]
    \draw[-,line width=1.2pt](0,-1) to (0,-.5);
    \draw[-,line width=1.2pt](1.8,-1) to (1.8,2.2);
    \draw[-,line width=1pt](0,-.5) to (1.8,.5);
    \draw[-,line width=1.2pt](0,.5) to (1.8,-.5);
    \draw[-,line width=1.2pt](0,.5) to (0,2.2);
    \draw[-,line width=1.2pt](1.8,1) to (0,1.8);
    \node at (2.7, 0) {$\scriptstyle b-1$};
    \node at (1, 2) {$\scriptstyle b-1$};
    \node at (0, -1.3) {$\scriptstyle 1$};
    \node at (1.8, -1.3) {$\scriptstyle b$};
\end{tikzpicture}
    		-
\begin{tikzpicture}[baseline = 7.5pt, scale=0.4, color=\clr]
    	\draw[-,line width=1.2pt](0,-1) to (0,0);
    	\draw[-,line width=1.2pt](1.5,-1) to (1.5,-.5);
                \draw[-,line width=1.2pt](1.5,-.5) to[out=135,in=135] (1,0);
                \draw[-,line width=1.2pt](1.5,-.5) to[out=45,in=45] (2,0);
                \draw[-,line width=1.2pt](0,0) to(1,.6);
                \draw[-,line width=1.2pt](1,0) to (0,0.6);
                \draw[-,line width=1.2pt](2,0) to (2,2.2);
                \draw[-,line width=1.2pt](1,0.6) to (1,1.8);
                \draw[-,line width=1.2pt](0,0.6) to (0,1.8);
                \draw[-,line width=1.2pt](2,1) to (1,1.6);
                \draw[-,line width=1.2pt](0,1.8) to (.5,2.2);
                \draw[-,line width=1.2pt](1,1.8) to (0.5,2.2);
                \draw[-,line width=1.2pt](.5,2.2) to (.5,2.4);
    			\node at (0, -1.3) {$\scriptstyle 1$};
    			\node at (1.5, -1.3) {$\scriptstyle b$};
    			\node at (1, -.5) {$\scriptstyle 1$};
\end{tikzpicture}
    		\quad \text{by induction hypothesis}
    		\\
    		&\overset{\eqref{equ:r=0}\eqref{swallows}}{\underset{\eqref{webassoc}}{=}}
\begin{tikzpicture}[baseline = 7.5pt, scale=0.35, color=\clr]
    			\draw[-,line width=1.2pt](0,-1) to (0,2.2);
    			\draw[-,line width=1.2pt](1.8,-1) to (1.8,2.2);
    			\draw[-,line width=1.2pt](1.8,.-.7) to (0,-.2);
    			\node at (0.9, -.8) {$\scriptstyle 1$};
    			\draw[-,line width=1.2pt](0,0.2) to (1.8,.7);
    			\node at (0.9, .8) {$\scriptstyle 1$};
    			\draw[-,line width=1.2pt](1.8,1.1) to (0,1.6);
    			\node at (0.9, 1.9) {$\scriptstyle b-1$};
    			\node at (0, -1.3) {$\scriptstyle 1$};
    			\node at (1.8, -1.3) {$\scriptstyle b$};
\end{tikzpicture}
    		-b
\begin{tikzpicture}[baseline = 7.5pt, scale=0.35, color=\clr]
    			\draw[-,line width=1.2pt](0,-1) to (0,2.2);
    			\draw[-,line width=1.2pt](1.8,-1) to (1.8,2.2);
    			\draw[-,line width=1.2pt](1.8,0) to (0,.5);
      			\node at (0.9, 1) {$\scriptstyle b-1$};
    			\node at (0, -1.3) {$\scriptstyle 1$};
    			\node at (1.8, -1.3) {$\scriptstyle b$};
\end{tikzpicture}
    		-(b-1)
\begin{tikzpicture}[baseline = 7.5pt, scale=0.35, color=\clr]
    			\draw[-,line width=1.2pt](0,-1) to (0,2.2);
    			\draw[-,line width=1.2pt](1.8,-1) to (1.8,2.2);
    			\draw[-,line width=1.2pt](1.8,0) to (0,.5);
    			\node at (0.9, 1) {$\scriptstyle b-1$};
    			\node at (0, -1.3) {$\scriptstyle 1$};
    			\node at (1.8, -1.3) {$\scriptstyle b$};
\end{tikzpicture}
    	\overset{\eqref{rung-sawp}\eqref{equ:r=0}}{\underset{\eqref{webassoc}}{=}}b
\begin{tikzpicture}[baseline = 7.5pt,scale=.35, color=\clr]
    \draw[-,line width=1.2pt](0,-1) to (0.9,0);
    \draw[-,line width=1.2pt](1.8,-1) to (.9,0);
    \draw[-,line width=1.2pt](0,2.2) to (0.9,1.2);
    \draw[-,line width=1.2pt](1.8,2.2) to (.9,1.2);
    \draw[-,line width=1.2pt](.9,0) to (.9,1.2);
    \node at (0, -1.4) {$\scriptstyle 1$};
    \node at (1.8, -1.4) {$\scriptstyle b$};
\end{tikzpicture}
    		-b
\begin{tikzpicture}[baseline = 7.5pt, scale=0.35, color=\clr]
    \draw[-,line width=1.2pt](0,-1) to (0,2.2);
    \draw[-,line width=1.2pt](1.8,-1) to (1.8,2.2);
    \draw[-,line width=1.2pt](1.8,0) to (0,.5);
    \node at (0.9, 1) {$\scriptstyle b-1$};
    \node at (0, -1.4) {$\scriptstyle 1$};
    \node at (1.8, -1.4) {$\scriptstyle b$};
\end{tikzpicture}.
\end{aligned}
\end{equation}
    This completes the proof of the case $a=1$. Suppose $a\ge 2$. We have 
\begin{align*}
    		a
\begin{tikzpicture}[baseline = 7.5pt, scale=0.4, color=\clr]
    \draw[-,line width=1.2pt] (0,-.2) to  (1,2.2);
    \draw[-,line width=1.2pt] (1,-.2) to  (0,2.2);
    \node at (0, -.5) {$\scriptstyle a$};
    \node at (1, -.5) {$\scriptstyle b$};
\end{tikzpicture}
    		&\overset{\eqref{equ:r=0}}{\underset{\eqref{sliders}}{=}}
\begin{tikzpicture}[baseline = 7.5pt, scale=0.4, color=\clr]
    \draw[-,line width=1.2pt](0,1.7) to (2.3,-0.2);
    \draw[-,line width=1.2pt](.5,.2) to[out=75,in=left] (2,1.7);
    \draw[-,line width=1.2pt](2,1.7) to [out=-100,in=right](.5,.2);
    \draw[-,line width=1.2pt](2,1.7)to(2.2,1.9);
    \draw[-,line width=1.pt](0,-.2) to (.5,.2);
    \node at (0.2, .5) {$\scriptstyle 1$};
    \node at (0, -.5) {$\scriptstyle a$};
    \node at (2.3, -.5) {$\scriptstyle b$};
\end{tikzpicture}
    		\overset{\eqref{crossrung1b}}{=}
\begin{tikzpicture}[baseline = 7.5pt, scale=0.4, color=\clr]
    \draw[-,line width=1.2pt](0,-1.5) to (0,-1);
    \draw[-,line width=1.2pt](0,-1) to (-.4,-.5);
    \draw[-,line width=1.2pt](0,-1) to (0.4,-.5);
    \draw[-,line width=1.2pt](-.4,-.5) to (-.4,.5);
    \draw[-,line width=1.2pt](.4,-.5) to (1.4,.5);
    \draw[-,line width=1.2pt](1.4,-.5) to (.4,.5);
    \draw[-,line width=1.2pt](-.4,.5) to (0,1);
    \draw[-,line width=1.2pt](0.4,.5) to (0,1);
    \draw[-,line width=1.2pt](0,1) to (0,1.5);
    \draw[-,line width=1.2pt](0,1.5) to (-.4,2);
    \draw[-,line width=1.2pt](0,1.5) to (0.4,2);
    \draw[-,line width=1.2pt](1.4,-1.5) to (1.4,-.5);
    \draw[-,line width=1.2pt](1.4,.5) to (1.4,2);
    \draw[-,line width=1.2pt](0.4,2) to (.9,2.4);
    \draw[-,line width=1.2pt](1.4,2) to (0.9,2.4);
    \draw[-,line width=1.2pt](0.9,2.4) to (0.9,2.6);
    \draw[-,line width=1.2pt](-0.4,2) to (-0.4,2.6);
    \node at (0,-1.7) {$\scriptstyle a$};
    \node at (1.4, -1.7) {$\scriptstyle b$};
    \node at (-.4, -1.1) {$\scriptstyle 1$};
    \node at (2.2, 1.5) {$\scriptstyle a-1$};
\end{tikzpicture}
    		-
\begin{tikzpicture}[baseline = 7.5pt, scale=0.4, color=\clr]
    \draw[-,line width=1.2pt](0,-1.5) to (0,-1);
    \draw[-,line width=1.2pt](0,-1) to (-.4,-.5);
    \draw[-,line width=1.2pt](0,-1) to (0.4,-.5);
    \draw[-,line width=1.2pt](-.4,-.5) to (-.4,.5);
    \draw[-,line width=1.2pt](.4,-.5) to (1.4,.5);
    \draw[-,line width=1.2pt](1.4,-.5) to (.4,.5);
    \draw[-,line width=1.2pt](-0.4,.5) to (-0.4,2);
    \draw[-,line width=1.2pt](0.4,.5) to (0.4,2);
    \draw[-,line width=1.2pt](0.4,1) to (-0.4,1.6);
    \draw[-,line width=1.2pt](1.4,-1.5) to (1.4,-.5);
    \draw[-,line width=1.2pt](1.4,.5) to (1.4,2);
    \draw[-,line width=1.2pt](0.4,2) to (.9,2.4);
    \draw[-,line width=1.2pt](1.4,2) to (0.9,2.4);
    \draw[-,line width=1.2pt](0.9,2.4) to (0.9,2.6);
    \draw[-,line width=1.2pt](-0.4,2) to (-0.4,2.6);
    \node at (0,-1.7) {$\scriptstyle a$};
    \node at (1.4, -1.7) {$\scriptstyle b$};
    \node at (-.4, -1.1) {$\scriptstyle 1$};
    \node at (2.2, 1.5) {$\scriptstyle a-1$};
\end{tikzpicture}
    		\\
    		&\overset{\eqref{swallows}}{\underset{\eqref{sliders}}{=}}\sum_{t\ge 0}(-1)^t
\begin{tikzpicture}[baseline = 7.5pt, scale=0.5, color=\clr]
    \draw[-,line width=1.2pt](0,-1.5) to (0,-1);
    \draw[-,line width=1.2pt](0,-1) to (-.4,-.5);
    \draw[-,line width=1.2pt](0,-1) to (0.4,-.5);
    \draw[-,line width=1.2pt](-.4,-.5) to (-.4,.5);
    \draw[-,line width=1.2pt](.4,-.5) to (.4,.5);
    \draw[-,line width=1.2pt](-.4,.5) to (0,1);
    \draw[-,line width=1.2pt](0.4,.5) to (0,1);
    \draw[-,line width=1.2pt](0,1) to (0,1.5);
    \draw[-,line width=1.2pt](0,1.5) to (-.4,2);
    \draw[-,line width=1.2pt](0,1.5) to (0.4,2);
    \draw[-,line width=1.2pt](1.4,-1.5) to (1.4,2);
    \draw[-,line width=1.2pt](1.4,-.5) to (0.4,-.2);
    \draw[-,line width=1.2pt](0.4,.1) to (1.4,.4);
     \draw[-,line width=1.2pt](0.4,2) to (.9,2.4);
    \draw[-,line width=1.2pt](1.4,2) to (0.9,2.4);
    \draw[-,line width=1.2pt](0.9,2.4) to (0.9,2.6);
     \draw[-,line width=1.2pt](-0.4,2) to (-0.4,2.6);
    \node at (0,-1.7) {$\scriptstyle a$};
    \node at (1.4, -1.7) {$\scriptstyle b$};
    \node at (1.6, -.2) {$\scriptstyle t$};
    \node at (-.4, -1.1) {$\scriptstyle 1$};
    \node at (2, 1.5) {$\scriptstyle a-1$};
\end{tikzpicture}
    		+
\begin{tikzpicture}[baseline = 7.5pt, scale=0.5, color=\clr]
    \draw[-,line width=1.2pt](0,-1) to (0,1.8);
    \draw[-,line width=1.2pt](1.4,-1) to (1.4,1.8);
    \draw[-,line width=1.2pt](0,-.5) to (1.4,1);
    \draw[-,line width=1.2pt](1.4,-.5) to (0,1);
    \node at (0,-1.5) {$\scriptstyle a$};
    \node at (1.4, -1.5) {$\scriptstyle b$};
    \node at (1.6, .3) {$\scriptstyle 1$};
    \node at (-.4, .3) {$\scriptstyle 1$};
\end{tikzpicture}
    		\quad \text{ by induction hypothesis}
    		\\
    		&\overset{\eqref{rung-sawp}}{=}\sum_{t\ge 0}(-1)^t\sum_{s\ge 0}\scalebox{1}{$\binom{t+1}{s}$}
\begin{tikzpicture}[baseline = 7.5pt, scale=0.7, color=\clr]
    \draw[-,line width=1.2pt](1.4,-1.3) to (1.4,-1);
    \draw[-,line width=1.2pt](1.4,-1) to (1,-.5);
    \draw[-,line width=1.2pt](1.4,-1) to (1.8,-.5);
    \draw[-,line width=1.2pt](1,-.5) to (1,.9);
    \draw[-,line width=1.2pt](1.8,-.5) to (1.8,.9);
    \draw[-,line width=1.2pt](1,.9) to (1.4,1.4);
    \draw[-,line width=1.2pt](1.8,.9) to (1.4,1.4);
    \draw[-,line width=1.2pt](1.4,1.4) to (1.4,2);
    \draw[-,line width=1.2pt](0,-1.3) to (0,2);
    \draw[-,line width=1.2pt](0,0) to (1,-.3);
    \draw[-,line width=1.2pt](0,0.3) to (1,.6);
    \draw[-,line width=1.2pt](0,1.1) to (1.4,1.7);
    \node at (0,-1.5) {$\scriptstyle a$};
    \node at (1.4, -1.5) {$\scriptstyle b$};
    \node at (2, .2) {$\scriptstyle t$};
    \node at (.4, -.3) {$\scriptstyle b-s$};
    \node at (1.4, .2) {$\scriptstyle s-t$};
    \node at (.4, 1.7) {$\scriptstyle 1$};
\end{tikzpicture}
    		+
    		\sum_{t\ge 0}(-1)^t\:
\begin{tikzpicture}[baseline = 7.5pt, scale=0.4, color=\clr]
    \draw[-,line width=1.2pt](0,-1.5) to (0,-1);
    \draw[-,line width=1.2pt](0,-1) to (-0.4,-.5);
    \draw[-,line width=1.2pt](0,-1) to (0.4,-.5);
    \draw[-,line width=1.2pt](-.4,-.5) to (-0.4,1.5);
    \draw[-,line width=1.2pt](0.4,-.5) to (0.4,1.5);
    \draw[-,line width=1.2pt](-.4,1.5) to (0,2);
    \draw[-,line width=1.2pt](0.4,1.5) to (0,2);
    \draw[-,line width=1.2pt](0,2) to (0,2.2);
    \draw[-,line width=1.2pt](2.2,-1.5) to (2.2,-1);
    \draw[-,line width=1.2pt](2.2,-1) to (2.6,-.5);
    \draw[-,line width=1.2pt](2.2,-1) to (1.8,-.5);
    \draw[-,line width=1.2pt](1.8,-.5) to (1.8,1.5);
    \draw[-,line width=1.2pt](2.6,-.5) to (2.6,1.5);
    \draw[-,line width=1.2pt](1.8,1.5) to (2.2,2);
    \draw[-,line width=1.2pt](2.6,1.5) to (2.2,2);
    \draw[-,line width=1.2pt](2.2,2) to (2.2,2.2);
    \draw[-,line width=1.2pt](1.8,-.22) to (0.4,.3);
    \draw[-,line width=1.2pt](0.4,.8) to (1.8,1.2);
    \node at (0, -1.8) {$\scriptstyle a$};
    \node at (2.2,-1.8) {$\scriptstyle b$};
    \node at (2,.5) {$\scriptstyle t$};
\end{tikzpicture}
    		\quad \text{ by induction hypothesis}
    		\\
&\overset{\eqref{webassoc}\eqref{equ:r=0}}{\underset{\eqref{rung-sawp}}{=}}
    \sum_{t\ge 0}(-1)^t\big(\sum_{s\ge 0}\scalebox{1}{$\binom{t+1}{s}\binom{s}{t}(a-s)$}
\begin{tikzpicture}[baseline = 2mm,scale=0.8, color=\clr]
    \draw[-,line width=1.2pt] (0,0) to (0,1);
    \draw[-,thick] (0.8,0) to (0.8,.2) to (.03,.4) to (.03,.6)to (.8,.8) to (.8,1);
    \draw[-,thin] (0.82,0) to (0.82,1);
    \node at (0.81,-.15) {$\scriptstyle b$};
    \node at (0,-.15) {$\scriptstyle a$};
    \node at (1,.5) {$\scriptstyle s$};
\end{tikzpicture}
   + \sum_{s\ge 0}\scalebox{1}{$\binom{t+1}{s}\binom{s+1}{t+1}(t+1)$ }
\begin{tikzpicture}[baseline = 2mm,scale=0.8, color=\clr]
    \draw[-,line width=1.2pt] (0,0) to (0,1);
    \draw[-,thick] (0.8,0) to (0.8,.2) to (.03,.4) to (.03,.6)to (.8,.8) to (.8,1);
    \draw[-,thin] (0.82,0) to (0.82,1);
    \node at (0.81,-.15) {$\scriptstyle b$};
    \node at (0,-.15) {$\scriptstyle a$};
    \node at (1.2,.5) {$\scriptstyle s+1$};
\end{tikzpicture}\big)
    		\\
    &=\sum_{t=0}^{\min(a,b)}
    		(-1)^t
\begin{tikzpicture}[baseline = 2mm,scale=0.8, color=\clr]
    \draw[-,line width=1.2pt] (0,0) to (0,1);
    \draw[-,thick] (0.8,0) to (0.8,.2) to (.03,.4) to (.03,.6)to (.8,.8) to (.8,1);
    \draw[-,thin] (0.82,0) to (0.82,1);
    \node at (0.81,-.15) {$\scriptstyle b$};
    \node at (0,-.15) {$\scriptstyle a$};
    \node at (1,.5) {$\scriptstyle t$};
\end{tikzpicture}\:.
\end{align*}
  	    	The last equality follows from the fact $0\le s-t \le 1$.  
    \end{proof}

\begin{lemma}
\label{lem:wellofbeta}
    The functor $\beta$ is well-defined.
\end{lemma}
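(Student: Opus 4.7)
The plan is to verify that each defining relation of $\qW$ holds in $\Qwb$ once the crossing generator is interpreted as the composite \eqref{crossgen}. The relations \eqref{webassoc}, \eqref{equ:r=0}, and \eqref{doublewdot} are defining relations of $\Qwb$, so the substantive content is the verification of the four remaining families: the square-switch identity \eqref{mergesplit}, the general-thickness dot slides \eqref{wdot-move-split and merge} (whose $a=1$ case is the defining relation \eqref{wdot-move-split and merge over C}), the dot-through-crossing relations \eqref{wdotsmovecrossing}, and the balloon identity \eqref{equ:onewdot}.

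First I would handle \eqref{mergesplit}: the defined crossing in $\Qwb$ is expressible via \eqref{def-crossing-by-rung} as a signed sum of rung diagrams built from merges and splits, so \eqref{mergesplit} reduces to an identity in the web category of type $A$; it is a formal consequence of \eqref{webassoc}, \eqref{equ:r=0}, \eqref{swallows}, \eqref{sliders}, and \eqref{rung-sawp}, all of which already hold in $\Qwb$ by \cite{BKu21}. Next I would establish \eqref{wdot-move-split and merge} for general thickness $a$ by induction on $a$: decompose the $a$-strand as an associative merge of a $1$-strand with an $(a-1)$-strand using \eqref{webassoc}, apply the $a=1$ case \eqref{wdot-move-split and merge over C} to push the white dot across the outer merge, invoke the inductive hypothesis on the thinner leg, and reassemble.

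For \eqref{wdotsmovecrossing} I would induct on $a+b$. The base case $a=b=1$ is a direct calculation using \eqref{crossing-11}, the defining merge-split relation \eqref{mergesplit over C}, and \eqref{wdot-move-split and merge over C}. For the inductive step I would expand the thick crossing via \eqref{def-crossing-by-rung} into a signed sum of rung diagrams, slide the white dot past each elementary rung by \eqref{rung-sawp with wdots} combined with the already-verified \eqref{wdot-move-split and merge}, and reassemble using \eqref{def-crossing-by-rung} on the opposite side. Finally, \eqref{equ:onewdot} is obtained by opening the balloon via \eqref{equ:r=0}, distributing the single white dot through the resulting nested splits and merges using \eqref{wdot-move-split and merge}, and then collapsing back using \eqref{doublewdot} to annihilate the extra same-strand dot pairs.

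The main obstacle I anticipate is the induction step for \eqref{wdotsmovecrossing}: the thick crossing is an elaborate composite of many thin crossings with merges and splits, so tracking the position of the white dot across each rewrite requires careful bookkeeping of the ladder structure and the sign conventions inherited from the super-interchange \eqref{super-interchange}. The key leverage is the dotted rung-swap \eqref{rung-sawp with wdots}, which repackages the commutator of a rung and a white dot as a difference of two dotted rungs, enabling a clean inductive reduction to thinner strands where the hypothesis applies.
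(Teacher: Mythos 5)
Your overall structure matches the paper's: the only relations of $\qW$ that need checking are \eqref{mergesplit}, \eqref{wdotsmovecrossing}, \eqref{wdot-move-split and merge}, and \eqref{equ:onewdot}, and your treatment of \eqref{mergesplit} (reduce to the type-$A$ presentation result via \eqref{def-crossing-by-rung}) and of \eqref{wdot-move-split and merge} (induction on thickness from the $a=1$ defining relation) is exactly what the paper does. Where you diverge is on \eqref{wdotsmovecrossing} and \eqref{equ:onewdot}: the paper does not re-derive these at all but simply cites \cite[Lemma 4.3, Lemma 4.12]{BKu21}, where they are already established in $\Qwb$. Your plan to reprove \eqref{wdotsmovecrossing} by expanding the thick crossing into rungs is plausible but substantially more work than necessary, and you leave the genuinely delicate part (cancellation of the extra terms produced when the dot lands on a rung) as "bookkeeping"; a cleaner elementary route, consistent with how the paper handles analogous inductions such as \eqref{crossrung1b}, would be to peel off one thin strand with \eqref{swallows}--\eqref{sliders} and induct on $a+b$.

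There is one concrete misstep: your sketch for \eqref{equ:onewdot} does not go through as written. The identity involves a \emph{single} white dot, so there are no "same-strand dot pairs" for \eqref{doublewdot} to annihilate, and \eqref{equ:r=0} collapses the undotted balloon rather than "opening" a dotted one. What \eqref{wdot-move-split and merge} actually gives you is the relation $a\cdot\big(\text{dot on the }a\text{-strand}\big)=B_1+B_{a-1}$, where $B_1$ and $B_{a-1}$ are the balloons with the dot on the thin and thick legs respectively; isolating $B_1$ requires an additional induction on $a$ to evaluate $B_{a-1}$ (essentially the argument of \cite[Lemma 4.12]{BKu21}), which your outline omits. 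Since the relation is true and is available as a citation, this does not threaten the lemma, but as a self-contained argument that step is incomplete.
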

\begin{proof}
By \cite{M}, the relation \eqref{webassoc} and \eqref{rung-sawp} implies the following relation
\begin{equation}
\begin{tikzpicture}[baseline = 2mm,scale=1,color=\clr]
	\draw[-,thick] (0,0) to (0,1);
	\draw[-,thick] (-.015,0) to (-0.015,.2) to (-.57,.4) to (-.57,.6)
        to (-.015,.8) to (-.015,1);
	\draw[-,line width=1.2pt] (-0.6,0) to (-0.6,1);
        \node at (-0.6,-.1) {$\scriptstyle a$};
        \node at (0,-.1) {$\scriptstyle b$};
        \node at (-0.3,.84) {$\scriptstyle d$};
        \node at (-0.3,.19) {$\scriptstyle c$};
\end{tikzpicture}
=
\sum_{t\ge 0}
\binom{b\! -\!a\!+\!d\!-\!c}{t}
\begin{tikzpicture}[baseline = 2mm,scale=1,color=\clr]
	\draw[-,line width=1.2pt] (0,0) to (0,1);
	\draw[-,thick] (-0.8,0) to (-0.8,.2) to (-.03,.4) to (-.03,.6)
        to (-.8,.8) to (-.8,1);
	\draw[-,thin] (-0.82,0) to (-0.82,1);
        \node at (-0.81,-.1) {$\scriptstyle a$};
        \node at (0,-.1) {$\scriptstyle b$};
        \node at (-0.4,.9) {$\scriptstyle c-t$};
        \node at (-0.4,.13) {$\scriptstyle d-t$};
\end{tikzpicture}.        
\end{equation}
    Thus, all relations of $\mathpzc{Web}$ hold in $\Qwb$ by \eqref{def-crossing-by-rung} and \cite[Remark 4.8]{BEEO}.
    In particular, \eqref{mergesplit}   also holds.
    The relations \eqref{wdotsmovecrossing}, \eqref{equ:onewdot} hold in $\Qwb$ by \cite[Lemma 4.3, Lemma 4.12]{BKu21}. 
Furthermore, the relation \eqref{wdot-move-split and merge} can be derived from \eqref{wdot-move-split and merge over C} by induction. This completes the proof that $\beta$ is well-defined.
\end{proof}

\subsection{$\QAW$ over $\C$}    
\begin{definition} 
\label{def-QAW-overC}Suppose $\kk=\C$.
   Let $\QAWC$ be the strict monoidal supercategory obtained from  $\qW$ by adjoining additional even generating morphisms $\dotgenC$,
   subject to the following additional relations \eqref{dotmovecrossingC}--\eqref{wdotbdot over C} for $a,b\in\mathbb{Z}_{\ge 1}$:
    	
\begin{equation}
\label{dotmovecrossingC}
\begin{tikzpicture}[baseline = 7.5pt, scale=0.3, color=\clr]
    \draw[-,thick] (0,-.2) to  (1,2.2);
    \draw[-,thick] (1,-.2) to  (0,2.2);
    \draw(0.2,1.6)\bdot;
    \node at (0, -.6) {$\scriptstyle 1$};
    \node at (1, -.6) {$\scriptstyle 1$};
\end{tikzpicture} 
    		=
\begin{tikzpicture}[baseline = 7.5pt, scale=0.3, color=\clr]
    			\draw[-, thick] (0,-.2) to (1,2.2);
    			\draw[-,thick] (1,-.2) to(0,2.2);
    			\draw(.8,0.3)\bdot;
    			\node at (0, -.6) {$\scriptstyle 1$};
    			\node at (1, -.6) {$\scriptstyle 1$};
\end{tikzpicture} 
    		+
\begin{tikzpicture}[baseline = 7.5pt, scale=0.3, color=\clr]
    \draw[-, thick] (0,.5) to (0,2.2);
    \draw[-, thick] (0,-.2) to (0,.5);
    \draw[-, thick]   (1,1.8) to (1,2.2); 
    \draw[-, thick] (1,1.8) to (1,-.2); 
    \node at (0, -.6) {$\scriptstyle 1$};
    \node at (1, -.6) {$\scriptstyle 1$};
\end{tikzpicture}
            -    		
\begin{tikzpicture}[baseline = 7.5pt, scale=0.3, color=\clr]
    \draw[-, thick] (0,.5) to (0,2.2);
    \draw[-, thick] (0,-.2) to (0,.5);
    \draw[-, thick]   (1,1.8) to (1,2.2); 
    \draw[-, thick] (1,1.8) to (1,-.2); 
    \node at (0, -.6) {$\scriptstyle 1$};
    \node at (1, -.6) {$\scriptstyle 1$};
    \draw(.,0.8)\wdot;
    \draw(1,0.8)\wdot;
\end{tikzpicture},
    		\qquad
\begin{tikzpicture}[baseline = 7.5pt, scale=0.3, color=\clr]
    \draw[-, thick] (0,-.2) to (1,2.2);
    \draw[-,thick] (1,-.2) to(0,2.2);
    \draw(.2,0.2)\bdot;
    \node at (0, -.6) {$\scriptstyle 1$};
    \node at (1, -.6) {$\scriptstyle 1$};
\end{tikzpicture}
    		= 
\begin{tikzpicture}[baseline = 7.5pt, scale=0.3, color=\clr]
    \draw[-,thick] (0,-.2) to  (1,2.2);
    \draw[-,thick] (1,-.2) to  (0,2.2);
    \draw(0.8,1.6)\bdot;
    \node at (0, -.6) {$\scriptstyle 1$};
    \node at (1, -.6) {$\scriptstyle 1$};
\end{tikzpicture}
    		+
\begin{tikzpicture}[baseline = 7.5pt, scale=0.3, color=\clr]
    \draw[-,thick] (0,-.2) to (0,2.2);
    \draw[-,thick] (1,2.2) to (1,-.2); 
    \node at (0, -.6) {$\scriptstyle 1$};
    \node at (1, -.6) {$\scriptstyle 1$};
\end{tikzpicture}
    		+
\begin{tikzpicture}[baseline = 7.5pt, scale=0.3, color=\clr]
    \draw[-, thick] (0,.5) to (0,2.2);
    \draw[-, thick] (0,-.2) to (0,.5);
    \draw[-, thick]   (1,1.8) to (1,2.2); 
    \draw[-, thick] (1,1.8) to (1,-.2); 
    \node at (0, -.6) {$\scriptstyle 1$};
    \node at (1, -.6) {$\scriptstyle 1$};
    \draw(.,0.8)\wdot;
    \draw(1,0.8)\wdot;
\end{tikzpicture}\:.
\end{equation}
    	 
\begin{equation}
\label{wdotbdot over C}
\begin{tikzpicture}[baseline = 1.5mm, scale=0.6, color=\clr]
    \draw[-,line width=1.2pt] (0,-0.1) to[out=up, in=down] (0,1.2);
    \draw(0,0.3) \bdot; 
    \draw(0,0.8) \wdot;
    \node at (0,-.35) {$\scriptstyle 1$};
\end{tikzpicture}
    =-~
\begin{tikzpicture}[baseline = 1.5mm, scale=0.6, color=\clr]
    \draw[-,line width=1.2pt] (0,-0.1) to[out=up, in=down] (0,1.2);
    \draw(0,0.3) \wdot; 
    \draw(0,0.8) \bdot;
    \node at (0,-.35) {$\scriptstyle 1$};
\end{tikzpicture}.
\end{equation}
    \end{definition}

\begin{theorem}
\label{equiv-over-C}
Suppose $\kk=\C$. Then
     $\QAW$ is isomorphic to $\QAWC$.
\end{theorem}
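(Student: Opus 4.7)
The plan is to construct mutually inverse strict monoidal superfunctors $\Phi \colon \QAWC \to \QAW$ and $\Psi \colon \QAW \to \QAWC$, both acting as the identity on objects and on the common subcategory $\qW$.

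For $\Phi$, send the unique additional generator $\dotgenC$ of $\QAWC$ to the thin black dot $\bdota$ at $a=1$ in $\QAW$. Well-definedness reduces to verifying the two defining relations of $\QAWC$ in $\QAW$: relation \eqref{wdotbdot over C} is exactly the $a=1$ specialization of \eqref{wdotbdot}, while relation \eqref{dotmovecrossingC} follows from the $a=b=1$ case of \eqref{dotmovecrossing} once one notes that the $t$-sums collapse (only $t=0$ and $t=1$ contribute, and each of the coefficients $t!$ and $(t-1)!$ equals~$1$) and that, with the convention that strands of thickness $0$ vanish, the ``double ladder with rung of thickness $t$'' diagrams reduce precisely to the three diagrams on the right-hand side of \eqref{dotmovecrossingC}.

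For $\Psi$, define $\Psi(\bdota)$ to be $\tfrac{1}{a!}$ times the balloon diagram appearing on the left-hand side of \eqref{intergralballon}, namely the configuration of $a$ thin strands meeting at a merge above and split below, each decorated with one thin black dot. This makes sense in $\QAWC$ because $a!$ is invertible over $\C$ and the diagram uses only merges, splits, and the generator $\dotgenC$; for $a=1$ it reduces to $\dotgenC$ itself. Well-definedness requires showing that each relation \eqref{dotmovecrossing}--\eqref{wdotbdot} of $\QAW$ holds in $\QAWC$ after substitution. The relation \eqref{intergralballon} is tautological from the definition of $\Psi(\bdota)$. The relation \eqref{wdotbdot} for general $a$ is verified by sliding the white dot $\wdota$ into the balloon via \eqref{wdot-move-split and merge over C}, applying the thin case \eqref{wdotbdot over C} on a single leg, and collecting using \eqref{super-interchange} together with \eqref{equ:twowhite=0} to kill all unwanted terms; relation \eqref{bdotmove} is analogous, using only commutativity of thin black dots on distinct thin strands inside the balloon.

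The principal obstacle is the thick crossing relation \eqref{dotmovecrossing} under $\Psi$. My strategy is to replace both the thick crossing and the thick black dot by their thin-level descriptions (using \eqref{crossgen} and the balloon formula respectively), apply the thin crossing relation \eqref{dotmovecrossingC} at each thin crossing that the single relevant thin black dot encounters, and then reassemble the resulting sum using \eqref{webassoc}, \eqref{swallows}, \eqref{rung-sawp}, and \eqref{equ:r=0}. The factorials $t!$ and $(t-1)!$ on the right-hand side of \eqref{dotmovecrossing} will arise by counting equivalent configurations of thin dots during the reassembly, while the $(t-1)!$ term carrying a pair of white dots tracks the $-(\text{parallel strands with wdots})$ summand of \eqref{dotmovecrossingC} through the balloon bookkeeping. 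This parallels the analogous computation in the affine type $A$ setting of \cite{SW1}, adapted to accommodate the additional white dots of type $Q$. Finally, mutual inverseness is immediate: $\Psi \circ \Phi = \mathrm{id}_{\QAWC}$ since both functors fix the generators of $\QAWC$, and $\Phi \circ \Psi = \mathrm{id}_{\QAW}$ on each generator $\bdota$ because $\Phi$ sends the balloon defining $\Psi(\bdota)$ to the same balloon in $\QAW$, which equals $\bdota$ by \eqref{intergralballon}.
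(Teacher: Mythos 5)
Your overall architecture coincides with the paper's: two mutually inverse strict monoidal superfunctors fixing $\qW$, with the easy direction following because \eqref{dotmovecrossingC} and \eqref{wdotbdot over C} are the $a=b=1$ cases of \eqref{dotmovecrossing} and \eqref{wdotbdot}, and the hard direction defining $\bdota$ in $\QAWC$ by the $\tfrac{1}{a!}$-normalized balloon \eqref{intergralballon over C}. Your treatment of \eqref{intergralballon} (tautological) and of \eqref{wdotbdot} (distribute the white dot over the legs, anticommute past the single coincident thin black dot, commute past the rest) is sound, and your composite-is-identity argument is the paper's.

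The genuine gap is your verification of \eqref{bdotmove}, which you dismiss as following from ``only commutativity of thin black dots on distinct thin strands inside the balloon.'' That is not the relevant fact. To compare $\mathrm{split}_{a,b}\circ\tfrac{1}{(a+b)!}\,\mathrm{balloon}_{a+b}$ with $\tfrac{1}{a!\,b!}(\mathrm{balloon}_a\otimes\mathrm{balloon}_b)\circ\mathrm{split}_{a,b}$, you must commute the split past the top merge of the $(a+b)$-balloon via \eqref{mergesplit}, which produces a sum over $\binom{a+b}{a}$ shuffle permutations of the thin strands, and those shuffles must then be pushed down past the $a+b$ thin black dots. A thin black dot does \emph{not} slide freely through a thin crossing --- that failure is precisely relation \eqref{dotmovecrossingC}, with its $\pm(\mathrm{id})$ and $\pm(\text{white dot}\otimes\text{white dot})$ correction terms. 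What saves the computation is the nontrivial identity that a thin crossing commutes with the \emph{product} of the two black dots on the strands it exchanges (in Sergeev-algebra terms, $s_i(x_ix_{i+1})=(x_ix_{i+1})s_i$), which requires the two correction terms of \eqref{dotmovecrossingC} to cancel against each other using the odd relation \eqref{wdotbdot over C} and the super-interchange sign; only then do the $\binom{a+b}{a}$ shuffles contribute identical terms and yield the factor $\tfrac{(a+b)!}{a!\,b!}$. You neither state nor prove this cancellation. The paper avoids it by proving \eqref{bdotmove} in Lemma \ref{lem:2.13re} through an induction on $a+b$ that invokes the already-established thick relation \eqref{dotmovecrossing} of Lemma \ref{lem:2.12re} --- so in the paper's logical order \eqref{bdotmove} \emph{depends on} the relation you call the principal obstacle, whereas you present it as independent and easy. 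Relatedly, your treatment of \eqref{dotmovecrossing} itself is only a strategy sketch; the recovery of the coefficients $t!$ and $(t-1)!$ and of the white-dot term is exactly the content of the paper's induction on $a$ and $b$ in Lemma \ref{lem:2.12re}, and as written your proposal does not carry out that bookkeeping. The architecture is correct and the gaps are fixable, but as it stands the hard half of the isomorphism is not established.
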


\begin{proof}
We define the following morphisms in $\QAWC$ which correspond to the generator 
$
\begin{tikzpicture}[baseline = 3pt, scale=0.5, color=\clr]
    \draw[-,line width=2pt] (0,0) to[out=up, in=down] (0,1.4);
    \draw(0,0.6) \bdot; 
    \draw (0.7,0.6) node {$\scriptstyle \omega_a$};
    \node at (0,-.3) {$\scriptstyle a$};
\end{tikzpicture} 
    	$, for $a\in\mathbb{N}$, in $\QAW$:
\begin{equation}
\label{intergralballon over C}
\begin{tikzpicture}[baseline = 3pt, scale=0.5, color=\clr]
    \draw[-,line width=2pt] (0,0) to[out=up, in=down] (0,1.4);
    \draw(0,0.6) \bdot; 
    \draw (0.7,0.6) node {$\scriptstyle \omega_a$};
    \node at (0,-.3) {$\scriptstyle a$};
\end{tikzpicture} 
    :=\frac{1}{a!} \ 
\begin{tikzpicture}[baseline = 1.5mm, scale=.7, color=\clr]
    \draw[-, line width=2pt] (0.5,2) to (0.5,2.5);
    \draw[-, line width=2pt] (0.5,0) to (0.5,-.4);
    \draw[-,thick]  (0.5,2) to[out=left,in=up] (-.5,1)to[out=down,in=left] (0.5,0);
    \draw[-,thick]  (0.5,2) to[out=left,in=up] (0,1)to[out=down,in=left] (0.5,0);      
    \draw[-,thick] (0.5,0)to[out=right,in=down] (1.5,1)to[out=up,in=right] (0.5,2);
    \draw[-,thick] (0.5,0)to[out=right,in=down] (1,1) to[out=up,in=right] (0.5,2);
    \node at (0.5,.5){$\scriptstyle \cdots\cdots$};
    \draw (-0.5,1) \bdot; 
    \draw (0,1) \bdot; 
    \node at (0.5,-.6) {$\scriptstyle a$};
    \draw (1,1) \bdot;
    \draw (1.5,1) \bdot; 
    \node at (-.22,0) {$\scriptstyle 1$};
    \node at (1.2,0) {$\scriptstyle 1$};
    \node at (.3,0.3) {$\scriptstyle 1$};
    \node at (.7,0.3) {$\scriptstyle 1$};
\end{tikzpicture}.
\end{equation}
 To show the isomorphism, we define two natural functors $\Psi:\QAWC\rightarrow\QAW$ and $\Phi:\QAW\rightarrow\QAWC$, matching the generating objects and generating morphisms in the same notation. Thus $\Phi\circ\Psi=id_{\QAW},\Psi\circ\Phi=id_{\QAWC}$.  Note that \eqref{dotmovecrossingC},\eqref{wdotbdot over C} are the special cases of \eqref{dotmovecrossing}, \eqref{wdotbdot}, thus $\Psi$ is well-defined. For the existence of $\Phi$, it suffices to verify that the relations \eqref{dotmovecrossing}, \eqref{bdotmove} and \eqref{wdotbdot} hold for $\QAWC$, which is given in Lemmas \ref{lem:2.12re}--\ref{lem:2.13re}.
\end{proof}
The following result is useful.
    \begin{proposition}
    	The following relation holds in $\QAWC$:
    	\begin{equation}			
.
\end{align*}
\end{proof}	
	
	
\section{Basis for the affine web category of type $Q$}
\label{CFD-diagram}	
In this section, we establish a basis theorem for the supercategory $\QAW$ by constructing a basis for each of its morphism spaces.

\subsection{The elementary chicken foot diagram for $\QAW$}
 A partition $\lambda=(\lambda_1,\lambda_2,\ldots,\lambda_k)$ is a composition such that $ \lambda_1\ge\lambda_2 \ge\ldots\ge  \lambda_{k}>0$. Let $\Par(m)$ denote the set of all partitions of $m$. Write
 $\Par=\cup_{m\in \N}\Par(m)$.
 
Let $ \text{Par}_a$ denote the set of partitions $\lambda$ such that $\lambda_i\le a$ for all $i$. For any $\lambda\in \Par_a$ with  length $k$, write 
\begin{equation}
\label{equ:defofbarlambda}
    \bar\lambda=\lambda-(1^k)=(\lambda_1-1,\lambda_2-1,\ldots,\lambda_k-1).
\end{equation}
For    $\lambda=(\lambda_1,\ldots,\lambda_k)$ and $\mu=(\mu_1,\ldots,\mu_t)$ in $\Par_a$, we introduce a shorthand notation for morphisms in $\End_{\QAW}(a)$ of the form, called {\em the elementary dot packet (of thickness $a$)},
\begin{equation}
\label{dots_data_simplify}
g_{\lambda,\mu}= \begin{tikzpicture}[baseline = -1mm,scale=1,color=\clr]
\draw[-,line width=1.5pt] (0.08,-.5) to (0.08,.5);
\node at (.08,-.7) {$\scriptstyle a$};
\draw(0.08,-0.2) \bdot;
\draw(.45,0.2)node {$\scriptstyle \omega^\circ_{\bar\lambda}$};
\draw(0.08,0.2) \bdot;
\draw(.45,-0.2)node {$\scriptstyle \omega_{\mu}$};
\end{tikzpicture}, \text{ where }
\begin{tikzpicture}[baseline = -1mm,scale=1,color=\clr]
\draw[-,line width=1.5pt] (0.08,-.5) to (0.08,.5);
\node at (.08,-.7) {$\scriptstyle a$};
\draw(0.08,0) \bdot;
\draw(.45,0)node {$\scriptstyle \omega^\circ_{\bar\lambda}$};
\end{tikzpicture}:= 
\omega^\circ_{a,\bar\lambda_1}\cdots\omega^\circ_{a,\bar\lambda_k} \text{ and }
\begin{tikzpicture}[baseline = -1mm,scale=1,color=\clr]
\draw[-,line width=1.5pt] (0.08,-.5) to (0.08,.5);
\node at (.08,-.7) {$\scriptstyle a$};
\draw(0.08,0) \bdot;
\draw(.45,0)node {$\scriptstyle \omega_{\mu}$};
\end{tikzpicture}
 := \omega_{a,\mu_1}\cdots \omega_{a,\mu_t}  
.
\end{equation}

A partition $\lambda$ with length $k$ is called strict if $\lambda_i>\lambda_{i+1}$ for all $1\le i\le k-1$.
Let $\SPar_a$ be the subset of $\Par_a$ consists of all strict partitions.



Let $\lambda,\mu\in \Lambda_{\text{st}}(m)$. A $\lambda\times \mu$ \emph{chicken foot diagram} (following \cite{BEEO}) is a  web diagram in $\Hom_{\QAW}(\mu,\lambda)$ consisting of three  horizontal  parts such that
\begin{itemize}
    \item the bottom part consists of only splits,
    \item the top part consists of only merges,
    \item the middle part consists of only crossings of the thinner strands (i.e., legs).
\end{itemize}
A chicken foot diagram (CFD) is called \emph{reduced} if there is at most one
intersection or one join between every pair of the legs.
\begin{definition} \label{def:diagram}
    For $\lambda,\mu\in \Lambda_{\text{st}}(m)$, a $\lambda\times \mu$ elementary chicken foot diagram (of type $Q$) is a $\lambda\times \mu$ reduced chicken foot diagram with an elementary dot packet $g_{\nu,\eta}$, for some $(\nu,\eta)\in \SPar_a\times\Par_a$, attached at the bottom of each leg with thickness $a$. 
\end{definition}

Just as a reduced chicken foot diagram of shape $A$ is encoded by $A\in \Mat_{\lambda,\mu}$, the elementary chicken foot diagrams of shape $A$ are encoded in the matrix $A$ enriched by certain bi-partitions. Denote  
\begin{equation}\label{dottedreduced}
 \PMat_{\lambda,\mu}:=\{(A, P)\mid A=(a_{ij})\in \Mat_{\lambda,\mu}, P=((\nu_{ij},\eta_{i,j})), \nu_{i,j}\in \SPar_{a_{i,j}}, \eta_{i,j}\in\Par_{a_{i,j}}\}.   
\end{equation}
Note that by the braid relation in \eqref{symmetric+braid}, all elementary diagram of the same type $(A,P)$ represent the same morphism in $\QAW$. 
We will identify the set of all elementary chicken foot diagrams from $\mu$ to $\lambda$ with $\PMat_{\lambda,\mu}$. In the following, the $(i,j)$-leg means the strands in the CFD corresponding to $a_{i,j}$. 

\begin{example}
Let $\lambda=(9,7),\mu=(5,5,6)$. The following is a $\lambda\times \mu$  elementary chicken foot diagram:
\begin{align}
    \label{ex-of-dotchickenfootd}
\begin{tikzpicture}[baseline = 19pt,scale=0.5,color=\clr,inner sep=0pt, minimum width=11pt]
   \draw[-,line width=.75mm] (-4,0) to (-4,-.3);
   \draw[-,line width=.75mm] (0,0) to (0,-.3);
   \draw[-,line width=.75mm] (4,0) to (4,-.3);
   \draw[-,line width=.75mm] (-2,5) to (-2,5.3);
   \draw[-,line width=.75mm] (2,5) to (2,5.3);
   \draw[-,line width=.25mm] (-4,0) to (-2,5);
   \draw[-,line width=.25mm] (-4,0) to (2,5);
   \draw[-,line width=.25mm] (0,0) to (-2,5);
   \draw[-,line width=.25mm] (0,0) to (2,5);
   \draw[-,line width=.25mm] (4,0) to (-2,5);
   \draw[-,line width=.25mm] (4,0) to (2,5);
   \draw(-3.5,1.25) \bdot;\node at (-4,1.25) {$\scriptstyle h_1$};
   \node at (-3,4) {$\scriptstyle 2$};
   \draw(-2.5,1.25) \bdot;\node at (-2,1.25) {$\scriptstyle f_1$};
   \node at (0.4,4) {$\scriptstyle 3$};
   \draw(-.5,1.25) \bdot;\node at (-1,1.25) {$\scriptstyle h_2$};
   \node at (-2,4) {$\scriptstyle 3$};
   \draw(.5,1.25) \bdot;\node at (1,1.25) {$\scriptstyle f_2$};
   \node at (1.8,3.8) {$\scriptstyle 2$};
   \draw(2.5,1.25) \bdot;\node at (2,1.25) {$\scriptstyle h_3$};
   \node at (-.4,4) {$\scriptstyle 4$};
   \draw(3.5,1.25) \bdot;\node at (4,1.25) {$\scriptstyle f_3$};
   \node at (2.8,4) {$\scriptstyle 2$};
\end{tikzpicture}
\end{align} 
where $h_i:=g_{\nu^{1,i},\eta^{1,i}},f_j:=g_{\nu^{2,j},\eta^{2,j}}, \nu^{i,j}\in\SPar_a,\eta^{i,j}\in\Par_a$ and the shape of this CFD is
$\begin{pmatrix} 
2&3&4\\
3&2&2
\end{pmatrix}.
$ 
\end{example}

\begin{proposition}
\label{prop:spanofaff}
    For any $\lambda,\mu\in \Lambda_{\text{st}}(m)$, $\Hom_\QAW(\mu,\lambda)$ is spanned by $\PMat_{\lambda,\mu}$ consisting of all $\lambda \times \mu$ elementary chicken foot diagrams from $\mu$ to $\lambda$.
\end{proposition}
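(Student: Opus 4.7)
The plan is to proceed by induction on the degree $k$ of a dotted web diagram $D \in \Hom_{\QAW}(\mu,\lambda)$, showing that, modulo $\Hom_{\QAW}(\mu,\lambda)_{<k}$, any such $D$ is a $\kk$-linear combination of elementary chicken foot diagrams. The base case $k = 0$ is the analogous spanning claim for $\qW$, to be handled in parallel as part of Theorem~\ref{thm:BasesAffine}(2); it follows by combining the known spanning of $\Hom_{\W}(\mu,\lambda)$ by reduced CFDs \cite{BEEO} with the exact relations \eqref{wdotsmovecrossing} and \eqref{wdot-move-split and merge}, which slide white dots through crossings, merges, and splits, together with \eqref{doublewdot}, \eqref{equ:twowhite=0}, and \eqref{equ:onewdot}, which collapse any multi-white-dot pattern on a single leg to a packet of the form $g_{\nu,\emptyset}$ with $\nu \in \SPar_a$.

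For the inductive step, I first convert each plain black dot into an $\omega_{a,1}$-decoration via \eqref{equ:r=0andr=a}, so that the dotted part of $D$ is expressed entirely in terms of $\omega_r$'s and $\omega^\circ_r$'s. Using the $\equiv$-relations \eqref{dotmovecrossing+high}, \eqref{ballcross}, \eqref{dotmovemerge+high}, \eqref{ballmerge}, and \eqref{ballsplit}, I slide these decorations along strands through every crossing, merge, and split; since each slide holds modulo lower-degree diagrams, the induction hypothesis absorbs the resulting error terms into the span. After all such slides the web has no decorations in its interior, so the $\W$-relations (which live inside $\QAW$) rewrite the remaining web, exactly, as a linear combination of reduced chicken foot diagrams, with each accumulated packet of $\omega$- and $\omega^\circ$-decorations carried intact to the bottom of a leg.

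It remains to normalize the dot packet on each leg of thickness $a$. By \eqref{extrarelation} the $\omega_r$'s commute, so they sort into a weakly decreasing sequence $\eta \in \Par_a$. The second part of \eqref{extrarelation} together with the super-interchange law \eqref{super-interchange} shows that the $\omega^\circ_r$'s super-anticommute modulo lower-degree diagrams; since $2$ is invertible in $\kk$, any repeated $\omega^\circ$-index forces the packet to vanish modulo lower degree, so the surviving $\omega^\circ$-indices form a strict partition $\nu \in \SPar_a$. Finally, \eqref{wdotballcommute} pulls every $\omega^\circ$ above every $\omega$, producing exactly an elementary dot packet $g_{\nu,\eta}$ and hence an elementary chicken foot diagram indexed by some $(A,P) \in \PMat_{\lambda,\mu}$.

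The main obstacle is the bookkeeping of the lower-degree error terms: every slide and every super-commutation produces a sum of diagrams whose web shape may differ from the original. The hard part will be to choose a termination invariant---for instance, a lexicographic complexity on (degree, number of interior decorations, height of decorations along each strand)---under which each rewrite strictly decreases complexity while generating only lower-degree or strictly simpler terms. The $\equiv$-lemmas of Section~\ref{sec:q-AWeb} supply all the moves needed; once termination is verified, the induction on degree closes cleanly.
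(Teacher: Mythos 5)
Your overall strategy (induction on the degree of the dotted diagram, sliding decorations to the bottoms of legs, then normalizing the dot packets) matches the paper's in outline, but the crucial normalization step rests on a false claim. You assert that the $\omega^\circ_r$'s ``super-anticommute modulo lower-degree diagrams,'' so that a repeated $\omega^\circ$-index kills the packet and the surviving indices automatically form a strict partition. This is not what the relations say: two $\omega^\circ$'s stacked on the \emph{same} strand are composed, not tensored, so the super-interchange law \eqref{super-interchange} does not apply to them, and the second identity of \eqref{extrarelation} states that they \emph{commute} modulo lower degree (the would-be anticommutator terms cancel via \eqref{equ:twowhite=0}, leaving a plus sign). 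In particular $\omega^\circ_{a,r}\omega^\circ_{a,r}$ does not vanish modulo lower degree: Lemma \ref{rrball} shows it is instead $\equiv$ a combination of $\omega^\circ_{a,r+t}\omega^\circ_{a,r-t}$ with distinct indices together with elements of $D_a$ (and in the extreme case \eqref{doublea-1}, $\omega^\circ_{a-1}\omega^\circ_{a-1}\equiv\omega_{a-1}\omega_{a-1}$, a nonzero all-black packet). The reduction to strict partitions therefore requires the genuinely nontrivial Lemmas \ref{rrball} and \ref{strictpartition}, which run an induction on the dominance order of $\lambda$ and on the number of white dots; your argument bypasses this entirely with an incorrect sign.

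Two further points where your sketch is thinner than what is needed. First, you flag but do not resolve the termination problem for your ``rewrite the whole diagram at once'' strategy; the paper avoids it by structuring the inductive step as a single generator $f$ (black dot, white dot, merge, or split) composed onto an already-elementary CFD $g$, which reduces everything to four clean local cases. Second, in the merge case two legs joining the same pair of vertices must be fused, and their two elementary dot packets must be recombined into a single elementary packet on the thicker leg; this is Lemma \ref{doublestring-g}, another substantial induction that your outline does not account for. Without these ingredients the spanning argument does not close.
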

\begin{proof}
We prove by induction on the degree $k$ for $\Hom_\QAW(\mu,\lambda)_{\le k}$. Note that the generators of $\QAW$  are themselves  elementary chicken foot diagrams. Moreover,  since the degree $0$ part is generated by those in \eqref{merge+split+crossing} and 
\begin{tikzpicture}[baseline = 3pt, scale=0.5, color=\clr]
\draw[-,line width=1.5pt] (0,0) to[out=up, in=down] (0,1.4);
\draw(0,0.6) \wdot; 
\node at (0,-.3) {$\scriptstyle a$};
\end{tikzpicture}, mimicking the argument as those in \cite{BEEO} for the web category, we have that the degree zero part  is spanned by the required (degree 0) elementary chicken foot diagram from $\mu$ to $\lambda$.

In general, 
it suffices to show that $fg$ can be expressed as a linear combination of elementary chicken foot diagrams up to lower degree terms for any $g \in \PMat_{\lambda,\mu}$ and specific generating morphism $f$ of the following four types:   
\begin{multicols}{2}
\begin{enumerate}
    \item[(1)]Type I:  $1_* \bdota 1_*$, 
       \item[(3)]Type  $\rot{Y}$: $1_*\merge 1_*$,
       \item[(2)]Type II: $1_* \wdota\: 1_*$, 
    \item[(4)]Type Y: $1_*\splits 1_*$.
\end{enumerate}
\end{multicols}
Here $1_*$ denotes suitable identity morphisms. 

In the following proof, we call $\bdota$ (resp. $\wdota$) a black dot 
(resp. a white  dot).
 We divided the proof according to the type of $f$. By Lemma \ref{dotmovefreely1}, we can move black dots and white dots through the crossing and from the thick strand to each of its legs up to lower degrees. Moreover, by Lemma \ref{dotmovefreely2}, we can exchange different $\omega_r$ and $\omega^\circ_r$ separately, and move $\omega_r$ below $\omega^\circ_r$ at the cost of generating additional terms of lower degrees. We will use this observation frequently below. 

(1) 
Suppose that $f$ is of type I with a black dot on the $i$th strand. Then $fg$ is obtained from $g$ by adding a black dot to the thick strand at the $i$th vertex. Suppose that $g$ has a $s$-fold merge at its $i$th vertex. Then we use \eqref{bdotmove} to move the black dot to each leg of the s-fold merge step by step and $fg$ is obtained from $g$ by adding a black dot on the top of each leg of the $s$-fold merge at its $i$th vertex. Note that adding a black dot on the top of the elementary dot packet $g_{\nu,\eta}$ may no longer be the elementary dot packet. By the \eqref{dots-ball} we have
$$
\begin{tikzpicture}[baseline = 1.5mm, scale=0.8, color=\clr]
	\draw[-,line width=1.5pt] (0,-0.4) to[out=up, in=down] (0,.9);
	\draw(0,.-.1) \bdot; 
        \draw(0,.3) \bdot; 
        \draw(0,0.7) \bdot;
	\node at (.4,-.1) {$\scriptstyle \omega_\eta$};
	\node at (.4,.3) {$\scriptstyle \omega^\circ_{\bar\nu}$};
        \node at (0,-.55) {$\scriptstyle r $};
\end{tikzpicture}
=\epsilon\:
\begin{tikzpicture}[baseline = 1.5mm, scale=0.8, color=\clr]
	\draw[-,line width=1.5pt] (0,-0.4) to[out=up, in=down] (0,.9);
        \draw(0,0) \bdot; 
        \draw(0,0.5) \bdot;
	\node at (.4,-.1) {$\scriptstyle \omega_{\eta'}$};
	\node at (.4,.5) {$\scriptstyle \omega^\circ_{\bar\nu}$};
        \node at (0,-.55) {$\scriptstyle r $};
\end{tikzpicture}
$$
where $\epsilon\in \left\{\pm1\right\}$ and $\eta'=(r,\eta_1,\ldots,\eta_n)$ are partitions by adding a $r$ to $\eta$. Then $g_{\nu,\eta'}$ is the elementary dot packet and $fg$ is also a elementary chicken foot diagrams. This completes the proof for $f$ of type I.

(2) 
Suppose that $f$ is of type II with a 
white dot on the $i$th strand. Then $fg$ is obtained from $g$ by adding a white dot to the thick strand at the $i$th vertex. Suppose that $g$ has a $s$-fold merge at its $i$th vertex. By \eqref{wdot-move-split and merge}, we also have 
$$
fg= \sum\limits_{1\le j\le s} g_j, 
$$
where $g_j$ is obtained from g by adding a white dot on the top of $j$th leg of the $s$-fold merge at its $i$th vertex. By \eqref{dots-ball} and \eqref{doublewdot}, adding a white dot to the top of any elementary dot packet is also an elementary dot packet. Thus $fg$ is the sum of the elementary chicken foot diagrams.

(3)
Suppose that $f$ is of type $\rot{Y}$, a two-fold merge joining to the $i$th and $(i+1)$th strands at the top of $g$. Suppose that $f$ is of type $\rot{Y}$, a two-fold merge joining to the $i$th and $(i+1)$th strand at the top of $g$. Then by \eqref{webassoc} $fg$
is a chicken foot diagram obtained from $g$ by merge the $s$-fold merge and the $s$-fold merge to a $(s+t)$-fold merge.
The resulting chicken foot diagram is not generally reduced since two legs may join twice. When this occurs, we can use \eqref{webassoc}, \eqref{swallows} and \eqref{symmetric+braid} to transform it into a CFD-like diagram, but some legs will become double legs (ref. \cite[Proposition 3.6]{SW1}), then by Lemma \ref{doublestring-g} and Lemma \ref{strictpartition}, we can combine the two legs with elementary dot packets and then make $fg$ into a linear combination of elementary chicken foot diagrams up to lower degree terms. 

(4)
Suppose that $f$ is of type Y, a 2-fold split joining to the $i$th vertex at the top of $g$. Suppose $g$ has a $h$-fold merge in the $i$th vertex. Then we first use \eqref{webassoc}, \eqref{mergesplit} and \eqref{sliders} to rewrite the composition of the split in $f$ and the merge of at the $i$th vertex of $g$ as a sum of reduced chicken foot diagrams. The resulting diagrams may not be reduced since there are new splits in the top part. Next, we use \eqref{sliders} to pull each new split down until it meets the elementary dot packet at the bottom. Then we use \eqref{dotmovemerge+high} and \eqref{ballsplit} to move $\wkdota$ and $\rcircdot$ step by step through the split from below (modulo lower degree terms). In the resulting diagrams, each leg contains composites of $\omega_r$ and $\omega^\circ_s$. Finally,  by \eqref{extrarelation}, \eqref{wdotballcommute} and Lemma \ref{strictpartition}, we can express this dot packet as the sum of some elementary dot packet module lower degree terms. This completes the proof for $f$ of type Y.  
\end{proof}

\subsection{A representation of $\QAW$}
In this subsection, we always assume that $\kk=\C$.	We shall construct a representation of $\QAW$ on a module category of the queer Lie superalgebra $\mathfrak{q}_n$.

	Fix $n\in \N$ and let $I=\{1,2,\ldots, n, \bar 1, \bar 2,\ldots,\bar n\}$. 
    Fix a superspace $V=V_{\bar{0}}\oplus V_{\bar{1}}$ with $\dim_\C V_{\bar{0}}= \dim_\C V_{\bar{1}}=n$ and  a homogeneous basis $\{v_i\mid i\in I\}$ with $|v_i|=\bar{0}$ and $|v_{\bar{i}}|=\bar{1}$ for $i\in I_0:=\{1,\ldots,n\}$.  We adopt the convention that $\bar{\bar{i}}=i$ for all $i\in I$. Let $c: V \rightarrow V$ denote the odd linear map given by $c(v_i)=(-1)^{|v_i|}\sqrt{-1}v_{\bar{i}}$ for all $i \in I$.
	
    
	The Lie superalgebra  $\mathfrak{q}_n$ of type $Q$ is the Lie subsuperalgebra of $\mathfrak {gl}_{n|n}=\mathfrak{gl}(V)$ given by $$
	\mathfrak{q}_n=\left\{x\in \mathfrak{gl}(V)|[x,c]=0\right\}.
	$$
	Then $\mathfrak{q}_n$
    consists of matrix of form $$
	\left\{\left(\begin{matrix}
		A&   B\\
		B&   A\\
	\end{matrix}
	\right)	\right\}
	$$
 and $\mathfrak q_n$   has a homogenous basis given by $e^{\bar{0}}_{i,j}:=e_{i,j}+e_{\bar{i},\bar{j}}$ and $e^{\bar{1}}_{i,j}:=e_{\bar{i},j}+e_{i,\bar{j}}$ for $1<i,j<n$, where 
    $e_{a,b}\in\glv$ is the linear map $e_{a,b}(v_k)=\delta_{b,k}v_a$ with  $|e_{a,b}|=|v_a|+|v_b|$, $a,b\in I$. Set $f^{\bar{0}}_{i,j}:=e_{i,j}-e_{\bar{i},\bar{j}}$ and $f^{\bar{1}}_{i,j}:=e_{\bar{i},j}-e_{i,\bar{j}}$ for $1<i,j<n$. These are homogeneous elements of $\glv$ and, together with our basis for $\mathfrak{q}_n$, provide a homogeneous basis for $\glv$.
     Let $U(\mathfrak{gl})$ (resp. $U(\mathfrak{q})$) denote the enveloping superalgebra of $\glv$ (resp. $\mathfrak{q}_n$).
There is a triangular decomposition 
$U(\mathfrak q)=U(\mathfrak n^-)\otimes U(\mathfrak h)\otimes U(\mathfrak n)$, where $U(\mathfrak h)$, $U(\mathfrak n)$ and $U(\mathfrak n^-)$ respectively, are the subsuperalgebras generated by 
\[\{e^\epsilon_{ii}\mid i\in I_0,\epsilon\in \Z_2\}, \quad \{ e^\epsilon_{ij}\mid 1\le i<j\le n, \epsilon\in\Z_2\}, \quad \{e^\epsilon_{ij}\mid 1\le j<i\le n, \epsilon\in\Z_2 \}.\]

	The tensor superalgebra is the tensor algebra $T(V)$ regarded as a superalgebra with the induced grading. As a quotient of $T(V)$, we have the supersymmetric algebra, namely
	$$
	S(V)=T(V)/\langle v\otimes w-(-1)^{|v||w|}w\otimes v | \text{for all homogeneous vectors } v,w\in V\rangle.
	$$
	Let $\mathfrak q_n\text{-Mod}_S$ denote the monoidal supercategory generated by the supersymmetric power  $S^k(V)$ of natural $\mathfrak{q}_n$-supermodule $V$, $k\in N$.
Recall $\mathfrak{q}\text{-}\mathbf{Web}$ 
in Definition \ref{defofbku}.

 \begin{proposition}[{\cite[Proposition 5.3]{BKu21}}]\label{functorofqweb}
		For every $n \geq 1$ there exists an essentially surjective functor of monoidal supercategories, 
		$$
		\Psi: \mathfrak{q}\text{-}\mathbf{Web}\to\mathfrak{q}_n\text{-}\mathbf{Mod_S},
		$$
		given on objects by 
		\[
		\Psi(\lambda)= S^{\lambda}(V) := S^{\lambda_{1}}(V) \otimes\dotsb \otimes S^{\lambda_{t}}(V),
		\] where $\lambda = (\lambda_{1}, \dotsc , \lambda_{t})$.  On morphisms the functor is defined by sending  $\odota$, $\merge$, and $\splits$, respectively, to the following maps:
		\begin{align*}
			S^{a}(V) &\to S^{a}(V), & v_{1}\dotsb v_{a}  \mapsto  \sum_{t=1}^{a}&(-1)^{\p{v_{1}}+\dotsb +\p{v_{t-1}}} v_{1} \dotsb c(v_{t}) \dotsb v_{a}; \\
			S^{a}(V) \otimes S^{b}(V) &\to S^{a+b}(V), & v_{1}\dotsb v_{a}\otimes w_{1}\dotsb w_{b} &\mapsto v_{1}\dotsb v_{a}w_{1}\dotsb w_{b};\\
			S^{a+b}(V) &\to S^{a}(V) \otimes S^{b}(V), & v_{1} \dotsb v_{a+b} \mapsto & \sum (-1)^{\varepsilon_{I,J}} v_{i_{1}}\dotsb v_{i_{a}} \otimes v_{j_{1}}\dotsb v_{j_{b}}.
		\end{align*}
		The last summation is over all $I=\{i_{1}< \dotsb < i_{a} \}$ and $ J=\{j_{1}< \dotsb < j_{b} \}$ such that $I \cup J = \left\{ 1, \dotsb ,a+b  \right\}$.  The element   $\varepsilon_{I,J} \in \Z_{2}$ is defined by the formula $v_{i_{1}}\dotsb v_{i_{a}}v_{j_{1}}\dotsb v_{j_{b}} = (-1)^{\varepsilon_{I,J}} v_{1}\dotsb v_{a+b}$ in $S^{a+b}(V)$.
	\end{proposition}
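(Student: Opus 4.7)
The plan is to construct $\Psi$ on generating objects and generating morphisms as prescribed in the statement, and then to verify that all defining relations \eqref{webassoc}--\eqref{equ:r=0}, \eqref{doublewdot}, \eqref{wdot-move-split and merge over C}--\eqref{wdot-rung-relation_2} of $\Qwb$ are respected under $\Psi$. The essential surjectivity is immediate from the description on objects, since every object in $\mathfrak{q}_n\text{-}\mathbf{Mod}_S$ is a tensor product of various $S^a(V)$.

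The first block of verifications concerns only merges, splits, and their interaction. The merge $\Psi(\merge)$ is the canonical multiplication in $S(V)$ and the split $\Psi(\splits)$ is the natural comultiplication obtained by summing over shuffles with Koszul signs, so \eqref{webassoc} and coassociativity of $\Psi(\splits)$ follow from associativity of $S(V)$. The relation \eqref{equ:r=0} is obtained by noting that the composition split-then-merge sends $v_1\cdots v_{a+b}$ to $\sum_{I,J}(-1)^{2\varepsilon_{I,J}}v_1\cdots v_{a+b}$, and the number of shuffles $(I,J)$ of type $(a,b)$ is exactly $\binom{a+b}{a}$. Both \eqref{rung-sawp one} and the square relation $\Psi(\odota)^2=a\cdot\mathrm{id}_{S^a(V)}$ in \eqref{doublewdot} reduce to the identity $c^2=\mathrm{id}_V$, which is a direct computation from the definition of $c$; a standard sign count on pairs $(s,t)$ with $s<t$ shows that the cross terms in $\Psi(\odota)^2$ cancel in pairs, leaving only the diagonal contribution $a\cdot\mathrm{id}$.

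The next block concerns the white-dot relations. The key observation is that $\Psi(\odota)$ is nothing but the natural action of $c$ on $S^a(V)$ as an \emph{odd derivation}, which is a consequence of $c$ being an odd endomorphism of $V$ that commutes (in the graded sense) with the multiplication by construction of $S(V)$. From this derivation property, \eqref{wdot-move-split and merge over C} is immediate, and \eqref{mergesplit over C} follows by computing the split-merge composite on a product $v_1\cdots v_av_{a+1}\cdots v_{a+b}$ using the Leibniz rule for the action of $c$. For \eqref{wdot-rung-relation_1} and \eqref{wdot-rung-relation_2} the verification is longer but formal: one writes out both sides on a basis vector and compares Koszul signs, again using the derivation property of $c$ together with the shuffle formula for $\Psi(\splits)$.

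The main technical difficulty I expect is the consistent bookkeeping of Koszul signs in the rung relations \eqref{rung-sawp one}--\eqref{wdot-rung-relation_2}, since these involve a triple of strands with possibly both $c$-insertions and shuffle sums; a clean way to organize this is to first establish the \emph{rung swap} identity \eqref{rung-sawp} at the level of $S(V)$ by induction on $a+b$ (using associativity and \eqref{equ:r=0}, whose images under $\Psi$ have been verified) and then derive \eqref{rung-sawp one}, \eqref{rung-sawp with wdots}, \eqref{wdot-rung-relation_1} and \eqref{wdot-rung-relation_2} as consequences together with the Leibniz rule for $c$. Once all defining relations are verified, $\Psi$ extends uniquely to a strict monoidal superfunctor, completing the construction.
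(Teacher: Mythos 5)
The paper does not actually prove this proposition: it is quoted from \cite[Proposition 5.3]{BKu21}, so there is no internal argument to compare against, and a direct verification of the defining relations of $\Qwb$ on $S^{\lambda}(V)$ --- which is what you propose --- is precisely the route taken in the cited source. Your overall strategy is therefore the right one: essential surjectivity is immediate from the description on objects, the pure merge/split relations are classical type $A$ computations, and the white-dot relations follow once one observes that the image of the white dot is the odd superderivation of $S(V)$ extending $c$; this observation also supplies the (otherwise unaddressed) well-definedness of that map on the quotient $S^a(V)$ of $T^a(V)$.

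Two points in your write-up need repair. First, the claim that \eqref{rung-sawp one} ``reduces to the identity $c^2=\mathrm{id}_V$'' is wrong: that relation contains no white dots, so $c$ plays no role in it. It is a pure type $A$ identity among merges and splits and must be checked either directly on $S^a(V)\otimes S^b(V)$ or, as you in fact say in your last paragraph, by first verifying the image of \eqref{mergesplit} (which is not a defining relation of $\Qwb$ but does hold in the image) and deducing \eqref{rung-sawp} from \eqref{webassoc} and \eqref{equ:r=0}. Since your final paragraph already contains the correct route, this is an internal inconsistency rather than a fatal gap, but the sentence as written is false. Second, the relations \eqref{mergesplit over C}, \eqref{rung-sawp with wdots}, \eqref{wdot-rung-relation_1} and \eqref{wdot-rung-relation_2} are where essentially all of the work lies, and ``writes out both sides on a basis vector and compares Koszul signs'' is a promissory note rather than an argument: for instance \eqref{mergesplit over C} requires tracking the signs $(-1)^{\varepsilon_{I,J}}$ coming from the split against the signs produced by commuting two odd operators, and the cancellation down to $2\cdot\mathrm{id}$ genuinely depends on $c^2=\mathrm{id}_V$, which in turn depends on the factor $\sqrt{-1}$ in the definition of $c$. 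These checks do all go through, but in a complete proof they must actually be carried out rather than asserted to be formal.
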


\begin{rem}
  	\label{rem-crossing}
  	By \ref{crossgen}, one can check that $$\Psi\big(\crossing \big)(v_{i_1}\cdots v_{i_a}\otimes w_{i_1}\cdots w_{i_b})= (-1)^{(\p{v_{i_1}}+\cdots+\p{v_{i_a}})(\p{w_{j_1}}+\cdots+\p{w_{j_b}})} w_{i_1}\cdots w_{i_b}\otimes v_{i_1}\cdots v_{i_a}$$ by induction on $a+b$. 
  \end{rem}
    
For any Lie superalgebra $\mathfrak{g}$, let $\End(\mathfrak{g}\text{-smod})$ be the supercategory whose objects are all endofunctors of $\mathfrak{g}\text{-smod}$ with natural transformations as morphisms. It is a strict monoidal supercategory with identity functor $id$ as the unit object. The tensor product of two functors $F$ and $G$ is given by composition, $i.e., F\otimes G:= G\circ F$.

    Define the following element:
	$$
	\Omega=\sum_{1\le i,j\le n}-e^{\bar{0}}_{i,j}\otimes f^{\bar{0}}_{j,i}+e^{\bar{1}}_{i,j}\otimes f^{\bar{1}}_{j,i} \in U(\mathfrak{q})\otimes U(\mathfrak{gl}).
	$$
	Note that this is $-\Omega$ in \cite[(7.2.1)]{HKS09}. Given a $U(\mathfrak{q})$ -supermodule $M$, we have an even linear map $M\otimes S^\lambda(V) \rightarrow M\otimes S^\lambda(V)$ given by$$
	\Omega.(m\otimes v)=\sum_{1\le i,j\le n}-e^{\bar{0}}_{i,j}m\otimes f^{\bar{0}}_{j,i}v+(-1)^{|m|}e^{\bar{1}}_{i,j}m\otimes f^{\bar{1}}_{j,i}v,
	$$ for all homogeneous $m\in M$ and $v \in S^\lambda(V)$.
	It follows from  \cite[Theorem 7.4.1]{HKS09} that 	 $\Omega$ gives an even $U(\mathfrak{q})$-supermodule homomorphism.

	\begin{proposition}
    \label{functorofaff}
		Suppose $\kk=\C$. There is a strict monoidal functor $\mathcal F:\QAW\rightarrow \End(\mathfrak {q}_n\text{-smod})$ sending the generating object $a$ to the functor $-\otimes S^a(V)$ such that  $\mathcal F(y)_M=\text{Id}_M\otimes\Psi(y)$  for $y\in\{ \splits, \merge, \crossing, \odota\}$, and  $\mathcal F\big(\dotgen\big)_M =\Omega$,
		where $\Psi$ is the functor given in Proposition~\ref{functorofqweb}.   
	\end{proposition}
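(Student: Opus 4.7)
The plan is to define $\mathcal{F}$ on the generating data as specified in the statement and then verify that the defining relations of $\QAW$ all hold. Since $\kk=\mathbb{C}$, by \cref{equiv-over-C} we have $\QAW\cong\QAWC$, so it suffices to check the relations of $\QAWC$: all the relations of $\qW$, together with the two new relations \eqref{dotmovecrossingC} and \eqref{wdotbdot over C}. This strategy is crucial because it reduces the affine data to a black dot living only on a thickness-$1$ strand, where the explicit formula for $\Omega$ directly applies, and frees us from verifying \eqref{dotmovecrossing}, \eqref{bdotmove}, \eqref{intergralballon}, \eqref{wdotbdot} in full thickness.

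I will dispose of the $\qW$-relations first. By \cref{isom-qWeb-overc} we have $\qW\cong\Qwb$ over $\mathbb{C}$, and \cref{functorofqweb} supplies the Brown--Kujawa monoidal superfunctor $\Psi:\Qwb\to\mathfrak{q}_n\text{-Mod}_S$. Because each $\Psi(y)$ is $\mathfrak{q}_n$-equivariant, the assignment $M\mapsto \mathrm{Id}_M\otimes\Psi(y)$ is a well-defined natural transformation of endofunctors of $\mathfrak{q}_n\text{-smod}$. Functoriality and monoidality of $\Psi$, together with compatibility of $\mathrm{Id}_M\otimes-$ with composition and tensor product of morphisms, then imply that every defining relation of $\qW$ is preserved by $\mathcal{F}$.

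The remaining two relations are verified by direct computation on concrete tensor factors. For \eqref{wdotbdot over C}, both sides are natural transformations on $M\otimes V$, and one has to show $(\mathrm{Id}_M\otimes c)\circ\Omega+\Omega\circ(\mathrm{Id}_M\otimes c)=0$. Expanding via the explicit formula for $\Omega$, the super-sign convention for $\mathrm{Id}_M\otimes c$, and the anticommutation identities $c\,f^{\epsilon}_{ji}+f^{\epsilon}_{ji}\,c=0$ in $\mathrm{End}(V)$ for $\epsilon\in\{\bar 0,\bar 1\}$ (obtained by a short computation on the basis $\{v_i,v_{\bar i}\}$) yields the cancellation. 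This step is routine sign bookkeeping.

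The main obstacle is the verification of \eqref{dotmovecrossingC}, which takes place on $M\otimes V\otimes V$. Both sides are expanded using the explicit action of $\Omega$ on the appropriate factor, the super-swap $\Psi(\crossing):v\otimes w\mapsto(-1)^{|v||w|}w\otimes v$, and the action of $c$ on each factor. The required identity reduces to an equality in $\mathrm{End}(M\otimes V\otimes V)$ of the form
\[
\Omega_{12}\circ(\mathrm{Id}_M\otimes\mathrm{sw})-(\mathrm{Id}_M\otimes\mathrm{sw})\circ\Omega_{13}=\mathrm{Id}_{M\otimes V\otimes V}-\mathrm{Id}_M\otimes(c\otimes c),
\]
where $\Omega_{ij}$ denotes $\Omega$ acting on tensor slots $i$ and $j$, and $\mathrm{sw}$ is the super-swap on $V\otimes V$. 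Once this identity is established---a type-$Q$ analog of the familiar fact that $\sum_{ij}e_{ij}\otimes e_{ji}$ represents the flip on $V\otimes V$, now corrected by the odd endomorphism $c$---the full relation \eqref{dotmovecrossingC} follows. This quadratic identity, together with careful tracking of the $(-1)^{|m|}$ factors coming from $\Omega$ and from the Koszul signs in $\mathrm{Id}_M\otimes\mathrm{sw}$, is the technical heart of the proof.
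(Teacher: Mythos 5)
Your overall strategy is exactly the paper's: reduce to the presentation $\QAWC$ over $\C$ (via \cref{equiv-over-C}), dispose of the $\qW$-relations using the Brown--Kujawa functor $\Psi$ of \cref{functorofqweb}, and check only \eqref{dotmovecrossingC} and \eqref{wdotbdot over C} by hand (the paper cites \cite[Theorem 7.4.1]{HKS09} for the latter; your direct anticommutation argument $cf^{\epsilon}_{ji}+f^{\epsilon}_{ji}c=0$ is a fine substitute). The problem is the identity you call the technical heart of the verification of \eqref{dotmovecrossingC}. As stated,
\[
\Omega_{12}\circ(\mathrm{Id}_M\otimes\mathrm{sw})-(\mathrm{Id}_M\otimes\mathrm{sw})\circ\Omega_{13}=\mathrm{Id}_{M\otimes V\otimes V}-\mathrm{Id}_M\otimes(c\otimes c)
\]
is false: with $\Omega_{ij}$ meaning $\Omega$ applied to tensor slots $i$ and $j$, the super-swap of the two copies of $V$ intertwines $\Omega_{13}$ with $\Omega_{12}$, i.e.\ $\Omega_{12}\circ(\mathrm{Id}_M\otimes\mathrm{sw})=(\mathrm{Id}_M\otimes\mathrm{sw})\circ\Omega_{13}$ (a short Koszul-sign check using $|f^{\bar 0}|=\bar 0$, $|f^{\bar 1}|=\bar 1$ confirms there is no sign obstruction). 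So your left-hand side is identically zero while the right-hand side is not.

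The gap is that the black dot on the \emph{second} strand of $M\otimes V\otimes V$ is not $\Omega_{13}$: by definition it is $\mathcal F(\dotgen)_{M\otimes V}=\Omega_{(M\otimes V),\,V_2}$, which decomposes as $\Omega_{13}+\Omega_{23}$, where $\Omega_{23}$ is the same bilinear expression $\sum_{i,j}\bigl(-e^{\bar 0}_{i,j}\otimes f^{\bar 0}_{j,i}+e^{\bar 1}_{i,j}\otimes f^{\bar 1}_{j,i}\bigr)$ acting on $V\otimes V$. It is precisely this cross term that survives after the $\Omega_{13}$ contribution cancels against $\Omega_{12}\circ\mathrm{sw}$, and the relation \eqref{dotmovecrossingC} reduces to the genuinely type-$Q$ ``flip'' identity for $\mathrm{sw}\circ\Omega_{23}$ on $V\otimes V$, namely that it equals $\pm\bigl(\mathrm{Id}-c\otimes c\bigr)$ up to the sign conventions distinguishing the two relations in \eqref{dotmovecrossingC} (compare \eqref{sx}). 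This is exactly what the paper's computation does: it splits the sum defining the dot on strand two according to whether $e^{\epsilon}_{i,j}$ acts on $m$ or on $v_t$, the first group giving the conjugated dot and the second giving the $\mathrm{Id}$ and $c\otimes c$ corrections. Your proof would be repaired by replacing the displayed identity with this statement about $\Omega_{23}$ and verifying it on the basis $\{v_i\}_{i\in I}$; as written, the reduction does not establish \eqref{dotmovecrossingC}.
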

 \begin{proof}
  It follows from Proposition~\ref{functorofqweb} that $\mathcal F\big(\splits\big), \mathcal F\big(\merge\big), \mathcal F\big(\crossing\big),  \mathcal F\big(\odota\big)$ satisfy  relations  \eqref{webassoc},\eqref{equ:r=0},\eqref{doublewdot} and  \eqref{wdot-move-split and merge over C}--\eqref{wdot-rung-relation_2}. Thus,
 it remains to check the relations \eqref{dotmovecrossingC}--\eqref{wdotbdot over C} according to the equivalent  definition of $\QAW$ over $\kk=\C$ in \S \ref{affineweboverC}.
 By remark \ref{rem-crossing}, we have 
    \[
    \mathcal F\big(
    \begin{tikzpicture}[baseline = 7.5pt, scale=0.4, color=\clr]
    	\draw[-, line width=1pt] (0,-.2) to (1,1.6);
    	\draw[-,thick] (1,-.2) to (0,1.6);
    	\node at (0, -.5) {$\scriptstyle 1$};
    	\node at (1, -.5) {$\scriptstyle 1$};
    \end{tikzpicture}
    \big)_M(m\otimes v_t\otimes v_h)
    =(-1)^{\p{t}\p{h}}m\otimes v_h\otimes v_t
    \text{ and}\]
    \begin{align*}
    	\mathcal F\big(
    	\begin{tikzpicture}[baseline = 7.5pt, scale=0.4, color=\clr]
    		\draw[-, line width=1pt] (0,-.2) to (0,1.6);
    		\draw[-,thick] (.8,-.2) to (.8,1.6);
    		\draw(.8,0.5)\bdot;
    		\node at (0, -.5) {$\scriptstyle 1$};
    		\node at (.8, -.5) {$\scriptstyle 1$};
    	\end{tikzpicture}
    	\big)_M(m\otimes v_t\otimes v_h)
    	&
    	=
    	\sum_{1\le i,j\le n}-e^{\bar{0}}_{i,j}(m\otimes v_t)\otimes f^{\bar{0}}_{j,i}v_h+(-1)^{|m|+|v_t|}e^{\bar{1}}_{i,j}(m\otimes v_t)\otimes f^{\bar{1}}_{j,i}v_h
    	\\
    	&=
    	\sum_{1\le i,j\le n}-e^{\bar{0}}_{i,j}m\otimes v_t\otimes f^{\bar{0}}_{j,i}v_h+(-1)^{|m|+|v_t|}e^{\bar{1}}_{i,j}m\otimes v_t\otimes f^{\bar{1}}_{j,i}v_h
    	\\
    	&~+
    	\sum_{1\le i,j\le n}-m\otimes e^{\bar{0}}_{i,j}v_t\otimes f^{\bar{0}}_{j,i}v_h+(-1)^{|v_t|}m\otimes e^{\bar{1}}_{i,j} v_t\otimes f^{\bar{1}}_{j,i}v_h\\
        &= \mathcal F\big(
      \begin{tikzpicture}[baseline = 7.5pt, scale=0.4, color=\clr]
    	 \draw[-, line width=1pt] (0,-.2) to (1,1.6);
    	 \draw[-,thick] (1,-.2) to (0,1.6);
        	\node at (0, -.5) {$\scriptstyle 1$};
    	  \node at (1, -.5) {$\scriptstyle 1$};
      \end{tikzpicture}\big)_M \circ
      \mathcal F\big(
      \begin{tikzpicture}[baseline = 7.5pt, scale=0.4, color=\clr]
      	\draw[-, line width=1pt] (0,-.2) to (1,1.6);
      	\draw[-,thick] (1,-.2) to (0,1.6);
      	\draw(.25,1.1)\bdot;
      	\node at (0, -.5) {$\scriptstyle 1$};
      	\node at (1, -.5) {$\scriptstyle 1$};
      \end{tikzpicture}
      \big)_M(m\otimes v_t\otimes v_h)\\
      &+\mathcal F\big(
     \begin{tikzpicture}[baseline = 7.5pt, scale=0.4, color=\clr]
     	\draw[-, line width=1pt] (0,-.2) to (1,1.6);
     	\draw[-,thick] (1,-.2) to (0,1.6);
     	\node at (0, -.5) {$\scriptstyle 1$};
     	\node at (1, -.5) {$\scriptstyle 1$};
     \end{tikzpicture}\big)_M \circ
     \mathcal F\big(
     \begin{tikzpicture}[baseline = 7.5pt, scale=0.4, color=\clr]
     	\draw[-, line width=1pt] (0,-.2) to (0,1.6);
     	\draw[-,thick] (.8,-.2) to (.8,1.6);
     	\draw(.8,0.5)\wdot;
     	\draw(.0,0.5)\wdot;
     	\node at (0, -.5) {$\scriptstyle 1$};
     	\node at (.8, -.5) {$\scriptstyle 1$};
     \end{tikzpicture}
     -
     \begin{tikzpicture}[baseline = 7.5pt, scale=0.4, color=\clr]
     	\draw[-, line width=1pt] (0,-.2) to (0,1.6);
     	\draw[-,thick] (.8,-.2) to (.8,1.6);
     	\node at (0, -.5) {$\scriptstyle 1$};
     	\node at (.8, -.5) {$\scriptstyle 1$};
     \end{tikzpicture}
     \big)_M(m\otimes v_t\otimes v_h).
    \end{align*}
  This yields the first relation in  \eqref{dotmovecrossingC}.  The second one   is similar.
   Finally,   the relation \eqref{wdotbdot over C} is already verified in \cite[Theorem 7.4.1]{HKS09}.
 \end{proof}
\begin{rem}
  	\label{remactfundot}
  	One can check that $\mathcal F\big(\wxdota \big)_M= \Omega(M\otimes S^a V)$, for $a\ge 1$. 
  \end{rem}
 
    By evaluation at any module $M\in \mathfrak q_n$-smod, we obtain a functor $\mathcal F_M: \QAW\rightarrow \mathfrak q_n$-smod. We specialize $M$ to be the generic Verma module $M^{\text{gen}}$ below. In this case, we shall work out the leading terms of the action of various morphisms on $M^{\text{gen}}\otimes S^a (V).$
 
    Let $U(\mathfrak h_{\bar{0}})$ be subsuperalgebra of $U(\mathfrak h)$ generated by $\left\{h_i|i=1,\ldots,n\right\}$, where for brevity we set $h_i=e^{\bar{0}}_{i,i}, \bar{h}_i=e^{\bar{1}}_{i,i}$ for $i=1,\ldots,n$. The usual $\Z$-grading on $U(\mathfrak{h}_{\bar{0}})$ is given by putting each $h_i$ in degree 1. Let $U(\mathfrak b)=U(\mathfrak h)\otimes U(\mathfrak n)$, then $U(\mathfrak h)$ is the $U(\mathfrak b)$-supermodule by inflation.
 
   We define the generic Verma module 
 \begin{equation}\label{defofgenericverma}
 	M^{\text{gen}}:= U(\mathfrak q)\otimes _{U(\mathfrak b)}U(\mathfrak h) \cong U(\mathfrak q)\otimes _{U(\mathfrak b)}(U(\mathfrak h)\otimes _{U(\mathfrak h_{\bar{0}})}U(\mathfrak h_{\bar{0}})).
 \end{equation} Let $\{f_1,\ldots,f_N\}$ (resp.$\{\bar{f}_1,\ldots,\bar{f}_N\}$) be the basis for $\mathfrak n^-_{\bar{0}}$(resp.$\mathfrak n^-_{\bar{1}}$) such that
 $$
 \left\{e^{\bar{0}}_{i,j}|1\le j< i\le n\right\}=\left\{f_1,\ldots,f_N\right\} \text{ and } \left\{e^{\bar{1}}_{i,j}|1\le j< i\le n\right\}=\left\{\bar{f}_1,\ldots,\bar{f}_N\right\}.
 $$ 
 By PBW theorem $M^{\text{gen}}$ is a free  right $U(\mathfrak h_{\bar{0}})$-module with basis  
 \begin{equation}
 	f^{a_1}_1\cdots f^{a_N}_N\bar{f}^{b_1}_1\cdots \bar{f}^{b_N}_N\otimes \bar{h}^{c_1}_1\cdots\bar{h}^{c_n}_n\otimes 1
 \end{equation}
    where each $a_k \in \Z_{\ge 0}$ and $b_k,c_k \in \left\{0,1\right\}$. Assigning $\deg h_i=1$ and $\deg f=\deg \bar{h}_i=0$ for $i=1,\ldots,n$ and $f\in \mathfrak n^-$ provides a $\Z$-grading on $M^{\text{gen}}$. This induces a $\Z$-grading on $M^{\text{gen}}\otimes S^\lambda (V)$ by further assigning degree 0 to any element in $S^\lambda (V).$
    
For simplicity, we write 
$v_\mathbf i:= v_{i_1}v_{i_2}\ldots v_{i_a} \in S^a(V)$ for any  $\mathbf i=(i_1,i_2,\ldots,i_a)$.  For any $i\in I$, we define  $i^*:=\bar{i}$ if $i\in \left\{\bar{1},\ldots,\bar{n}\right\}$ and $i^*:=i$ if $i\in \left\{1,\ldots,n\right\}$.
    \begin{lemma}
 	\label{lem:actionofdotlead}
 	The action of $\mathcal F_{M^{\text{gen}}} (\wxdota)$
 	on $M^{\text{gen}}\otimes S^a (V)$ is of filtered degree 1. Moreover, denote $u\otimes h=1\otimes 1\otimes h \in M^{\text{gen}}$ for any $h\in U(\mathfrak h_{\bar{0}})$, up to lower degree terms, we have 
 	\begin{equation*}
 		\wxdota(u\otimes h\otimes v_\mathbf i)
 		\equiv u\otimes\sum_{1\le j\le a}(-1)^{|i_j|+1}h_{i^*_j}h\otimes v_\mathbf i,
 	\end{equation*}  
 	for any $h\in U(\mathfrak h_{\bar{0}})$ and $\mathbf i=(i_1,\ldots,i_a)$.
 \end{lemma}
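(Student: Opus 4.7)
The plan is to unfold $\wxdota$ via \eqref{equ:r=0andr=a} with $r=1$ as the balloon composite
\[
\wxdota \;=\; \merge \circ (\dotgen \otimes 1_{a-1}) \circ \splits,
\]
where $\splits\colon a \to (1,a-1)$ and $\merge\colon (1,a-1) \to a$. Applying $\mathcal F$ and using Proposition~\ref{functorofqweb} for the split and merge together with $\mathcal F(\dotgen)=\Omega$ from Proposition~\ref{functorofaff}, one obtains
\[
\mathcal F(\wxdota)(u\otimes h\otimes v_\mathbf i) = \sum_{k=1}^{a}(-1)^{\varepsilon_k}\bigl(\mathrm{id}_{M^{\text{gen}}}\otimes \mathcal F(\merge)\bigr)\bigl(\Omega(u\otimes h\otimes v_{i_k})\otimes v_{\mathbf i\setminus k}\bigr),
\]
where $v_{\mathbf i\setminus k}:=v_{i_1}\cdots\widehat{v_{i_k}}\cdots v_{i_a}$ and $\varepsilon_k:=|v_{i_k}|\sum_{j<k}|v_{i_j}|$ is the Koszul sign read off from the split formula in Proposition~\ref{functorofqweb}. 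This reduces the whole lemma to analysing $\Omega$ on $M^{\text{gen}}\otimes V$.

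The next step is to evaluate $\Omega(u\otimes h\otimes v_p)$ directly from its defining formula, using $f^{\bar 0}_{j,i}v_p = \delta_{i,p}v_j-\delta_{\bar i,p}v_{\bar j}$ and $f^{\bar 1}_{j,i}v_p=\delta_{i,p}v_{\bar j}-\delta_{\bar i,p}v_j$. For $p\in I_0$ this gives
\[
\Omega(u\otimes h\otimes v_p) = \sum_{j=1}^{n}\Bigl(-e^{\bar 0}_{p,j}(u\otimes h)\otimes v_j + e^{\bar 1}_{p,j}(u\otimes h)\otimes v_{\bar j}\Bigr),
\]
with a symmetric expression (in terms of $p^*$) for $p\notin I_0$. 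With the $\Z$-grading on $M^{\text{gen}}$ from \eqref{defofgenericverma} ($\deg h_i=1$, $\deg \bar h_i=\deg f=0$), the PBW decomposition shows that $e^{\bar 0}_{p,j}(u\otimes h)$ attains degree $d+1$ exactly when $j=p$, giving $u\otimes h_p h$; for $j<p$ it produces a PBW element in degree $d$ via $\mathfrak n^-$, and for $j>p$ it is zero. Every $e^{\bar 1}_{p,j}(u\otimes h)$ lies in degree $\le d$ because $\bar h_i$ has degree $0$. Hence $\Omega(u\otimes h\otimes v_p)\equiv(-1)^{|v_p|+1}u\otimes h_{p^*}h\otimes v_p$ modulo filtered degree $\le d$.

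The final step is to assemble. Because the leading contribution of $\Omega(u\otimes h\otimes v_{i_k})$ preserves $v_{i_k}$ in its $V$-slot, applying $\mathcal F(\merge)$ recovers $v_{i_k}v_{\mathbf i\setminus k}=(-1)^{\varepsilon_k}v_\mathbf i$, and the two Koszul signs cancel. Summing over $k$ produces the desired leading formula, and the fact that every discarded contribution lies in filtered degree $\le d$ simultaneously yields the filtered-degree-$1$ claim. The main obstacle is purely bookkeeping: aligning the Koszul signs coming from the split and the merge, and confirming that none of the off-diagonal $e^{\bar 0}$ terms nor any $e^{\bar 1}$ term can exceed the leading degree; no conceptual difficulty arises beyond careful tracking of the super-sign conventions.
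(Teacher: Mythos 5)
Your computation of the leading term is correct, and the route differs from the paper's only in organization: the paper first records (Remark \ref{remactfundot}) that the balloon $\wxdota$ acts on $M\otimes S^a(V)$ as the single operator $\Omega$ for the pair $(M, S^a(V))$, with $f^{\bar\epsilon}_{ji}$ acting on $S^a(V)$ by derivations, and then reads off the leading term from the diagonal summands $-e^{\bar 0}_{ii}\otimes f^{\bar 0}_{ii}$; you instead keep the balloon unfolded as $\merge\circ(\dotgen\otimes 1_{a-1})\circ\splits$ and push everything to $M^{\text{gen}}\otimes V$. The two are the same computation — your sign bookkeeping (the split sign $\varepsilon_k$ squaring away against the merge) is exactly what makes Remark \ref{remactfundot} true on these vectors — so your version is slightly more self-contained in that it does not presuppose the remark, at the cost of carrying the Koszul signs explicitly. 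Your case analysis of $e^{\bar 0}_{p,j}(1\otimes 1\otimes h)$ (zero for $j>p$ since $\mathfrak n$ acts by inflation, degree $d$ for $j<p$, degree $d+1$ exactly for $j=p$) and the observation that all $e^{\bar 1}_{p,j}$ contributions stay in degree $\le d$ reproduce the paper's degree argument and yield the stated formula.

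There is, however, a gap in your treatment of the first assertion. The lemma claims that $\mathcal F_{M^{\text{gen}}}(\wxdota)$ is of filtered degree $1$ on \emph{all} of $M^{\text{gen}}\otimes S^a(V)$, whereas your degree analysis is carried out only on vectors of the form $1\otimes 1\otimes h\otimes v_{\mathbf i}$; your closing sentence asserts that the filtered-degree-$1$ claim falls out ``simultaneously,'' but it does not. For a general PBW basis vector $f^{a_1}_1\cdots f^{a_N}_N\bar f^{b_1}_1\cdots\bar f^{b_N}_N\otimes\bar h^{c_1}_1\cdots\bar h^{c_n}_n\otimes h$ one must commute $e^{\bar\epsilon}_{ij}$ past the $\mathfrak n^-$-part before reaching the Cartan part, and bounding the resulting degree shift is precisely the content of the external input the paper invokes (\cite[Lemma 5.1]{BCK19}: left multiplication by $e^{\epsilon}_{ij}$ on $M^{\text{gen}}$ has filtered degree $1$ for $i\le j$ and degree $0$ for $i>j$). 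You need either to cite that fact or to redo the commutator estimate for arbitrary PBW monomials; the leading-term formula for $u\otimes h$ alone does not imply it.
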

\begin{proof}
 Note that the linear map defined by left action of  $ e^\epsilon_{ij}, \epsilon\in\Z_2 $ on $M^{\text{gen}}$ is of filtered degree 1 (and respectively, of degree 0) for $i\le j$ (and respectively, $i>j$) by \cite[Lemma 5.1]{BCK19}. This together with Remark \ref{remactfundot} implies the first statement. On the other hand, we have
 	$$
 	\begin{aligned}
 		\wxdota(u\otimes h\otimes v_{\mathbf i})
 		&=\Omega(u\otimes h\otimes v_{\mathbf i} )=\sum_{1\le i,j \le n}-e^{\bar{0}}_{ij}\left(u\otimes h\right)\otimes f^{\bar{0}}_{ji} \left(v_{\mathbf i}\right)
   +(-1)^{\p{u}}e^{\bar{1}}_{ij}\left(u\otimes h\right)\otimes f^{\bar{1}}_{ji}\left(v_{\mathbf i}\right)\\
 		&\equiv \sum_{1\le i\le n} -e^{\bar{0}}_{ii}\left(u\otimes h\right)\otimes f^{\bar{0}}_{ii} \left(v_{\mathbf i}\right)\\
 		&=\sum_{j=1}^{a}\sum_{1\le i\le n}-e^{\bar{0}}_{i,i}\left(u\otimes h\right)\otimes v_{i_1} \ldots \left(\delta_{i_j,i}v_{i_j}-\delta_{i_j,\bar{i}}v_{\bar{i_j}}\right)\ldots v_{i_a}
 		\\
 		&= u\otimes\sum_{1\le j\le a}(-1)^{|i_j|+1}h_{i^*_j}h\otimes v_{\mathbf i}.
 	\end{aligned}  $$
    The lemma is proved.
 \end{proof}

   For brevity, set $y_i=(-1)^{\p{i}+1}h_{i^*}$ for any $i\in I$.
 \begin{corollary}
   	\label{actionofelempoly}
   	For any $\nu\in \Par_a$ and $\mathbf i=(i_1,\ldots,i_a)$, we have 
   	\[
   	\omega_{a,\nu} (u\otimes h\otimes v_{\mathbf i})
   	\equiv
   	\big(u\otimes e_{\nu}(y_{i_1},\ldots, y_{i_a})h \big) \otimes 
   	v_{\mathbf i},
   	\]
   	where $e_{\nu}(x_1,\ldots, x_a)$ is the elementary symmetric polynomial associated with partition $\nu$.
   \end{corollary}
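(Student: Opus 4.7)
The plan is to prove the corollary by induction on the length of the partition $\nu$, reducing to the single-generator case $\omega_{a,r}$.

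First I would handle the multiplicative reduction. Since $\omega_{a,\nu} = \omega_{a,\nu_1}\cdots\omega_{a,\nu_k}$ by \eqref{dots_data_simplify}, and each factor will be shown to act, to leading filtered degree, by multiplication by a polynomial in the commuting Cartan elements $y_{i_1},\ldots,y_{i_a}\in U(\mathfrak h_{\bar 0})$ without disturbing the $v_\mathbf{i}$-component, the composition at leading order is multiplication by the product. The filtered degrees add: if $\omega_{a,\nu_j}$ has leading degree $\nu_j$, then the composition has leading degree $|\nu|$ with leading term $\prod_j e_{\nu_j}(y)=e_\nu(y)$. Thus it suffices to prove $\omega_{a,r}(u\otimes h\otimes v_\mathbf{i})\equiv u\otimes e_r(y_{i_1},\ldots,y_{i_a})h\otimes v_\mathbf{i}$ modulo lower degree, for each $1\le r\le a$.

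For the single-generator case, I would unfold the balloon definition \eqref{equ:r=0andr=a}: $\omega_{a,r}$ is the composition of a split $S^a(V)\to S^r(V)\otimes S^{a-r}(V)$, the generating black dot on the $r$-strand, and the corresponding merge. The split sends $v_\mathbf{i}$ to the signed sum $\sum_J\epsilon_J\,v_{\mathbf{i}_J}\otimes v_{\mathbf{i}_{J^c}}$ over $r$-subsets $J\subset\{1,\ldots,a\}$, and the merge contributes the reciprocal Koszul signs. The crux is to establish that the generating black dot on thickness $r$ acts on $u\otimes h\otimes v_{\mathbf{i}_J}$ with leading term the monomial $\prod_{j\in J}y_{i_j}$. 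Summing over $J$ then yields $\sum_J\prod_{j\in J}y_{i_j}=e_r(y_{i_1},\ldots,y_{i_a})$, as desired.

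The main obstacle is identifying the leading action of the generating black dot on thickness $r$. I would use the integral balloon relation \eqref{intergralballon}, which writes $r!$ times this dot as a ``big balloon'' consisting of the complete split $S^r(V)\to V^{\otimes r}$, one single-strand black dot on each of the $r$ thin strands, and the reassembling merge. Each single-strand dot acts on its slot with leading filtered degree one by multiplication by the appropriate $y$-element, by Lemma~\ref{lem:actionofdotlead}. Summing over permutations $\sigma\in S_r$ of the slots (with Koszul signs $\varepsilon_\sigma$ from the split, which square to one after pairing with the merge), and observing that the commuting $y$'s produce the same monomial $y_{i_1}\cdots y_{i_r}$ regardless of order, one obtains $r!\prod_j y_{i_j}$ as the leading contribution of the big balloon; dividing by $r!$ gives the desired leading term $\prod_{j\in J}y_{i_j}$ on $S^r(V)$. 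Feeding this back into the balloon then gives the corollary.
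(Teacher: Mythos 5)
Your argument is correct and is essentially the intended one: the paper's own proof is just the one-line citation ``similar to \cite[Lemma 3.11]{SW1}'', and the argument there is exactly this balloon decomposition --- reduce multiplicatively to a single $\omega_{a,r}$, unfold \eqref{equ:r=0andr=a} into split/dot/merge, and identify the leading action of the thickness-$r$ dot via \eqref{intergralballon} and Lemma~\ref{lem:actionofdotlead}, with the Koszul signs squaring to one so that the subset sum assembles into $e_r$. Your write-up supplies the details the paper omits, and I see no gaps.
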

\begin{proof}
		The proof is similar to \cite[Lemma 3.11]{SW1}.
\end{proof}

For any $\mathbf i\in I^a$ and $1\le t\le a$, write 
$\mathbf i_t=(i_1, \ldots, i_{t-1}, \hat i_t, i_{t+1},\ldots,i_a)\in I^{a-1}$, where $\hat i_t$ means $i_t$ is omitted.
\begin{corollary}
\label{action-one-ball} 
For any $0\le r\le a-1$, we have
\begin{equation*}
   		\omega^\circ_{a,r}
   		(u\otimes h\otimes v_{\mathbf i})
   		\equiv 
   		\sqrt{-1}\sum_{t=1}^{a} u\otimes (-1)^{\p{i_1}+\ldots+\p{i_t}}e_r(y_{i_1},\ldots,\widehat{y_{i_t}},\ldots, y_{i_a})h \otimes 
   		v_{i_1} \ldots v_{\bar{i_t}} \ldots v_{i_a},
   	\end{equation*}
   	where $\widehat{y_{i_a}}$ means that this variable is missing.
   \end{corollary}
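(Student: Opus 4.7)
The plan is to unfold the definition of $\omega^\circ_{a,r}$ in \eqref{bw-dot} as the composition of a split $a\to(r,a-r)$, a bare black dot on the thickness-$r$ strand, a white dot on the thickness-$(a-r)$ strand, and a merge, and then evaluate each factor via the functor $\mathcal F$ of Proposition \ref{functorofaff}.

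First I would apply $\Psi(\splits)$ to $v_{\mathbf i}$ to obtain $\sum_{I,J}(-1)^{\varepsilon_{I,J}}\,v_{i_I}\otimes v_{i_J}$, where $\{1,\dots,a\}=I\sqcup J$, $|I|=r$, and $J=\{m_1<\dots<m_{a-r}\}$. Since the bare black dot on a thickness-$r$ strand coincides with $\omega_{r,r}$ by the convention preceding \eqref{bw-dot}, Corollary \ref{actionofelempoly} applied with $\nu=(r)$ shows that it acts on $M^{\text{gen}}\otimes S^r(V)$, up to lower degree, as multiplication by $\prod_{l\in I}y_{i_l}$ in the $h$-slot. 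The white dot is odd while the black dot is even, so evaluating $\bdota\otimes\odota$ on $(u\otimes h\otimes v_{i_I})\otimes v_{i_J}$ in the Koszul convention produces an extra sign $(-1)^{|v_{i_I}|}=(-1)^{\sum_{l\in I}\p{i_l}}$; this sign will be decisive.

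Next I would use the explicit formula for $\Psi(\odota)$ from Proposition \ref{functorofqweb} together with $c(v_j)=(-1)^{|v_j|}\sqrt{-1}\,v_{\bar j}$ to replace $v_{i_J}$ by $\sum_s (-1)^{\p{i_{m_1}}+\dots+\p{i_{m_s}}}\sqrt{-1}\,v_{i_{m_1}}\dots v_{\bar i_{m_s}}\dots v_{i_{m_{a-r}}}$. After $\Psi(\merge)$, the second tensor factor becomes $v_{i_I}v_{i_{m_1}}\cdots v_{\bar i_t}\cdots v_{i_{m_{a-r}}}$ with $t=m_s$. Rewriting this in the canonical ordered form $v_{i_1}\cdots v_{\bar i_t}\cdots v_{i_a}\in S^a(V)$ yields a shuffle sign $(-1)^{\varepsilon'_{I,J,t}}$ whose exponent differs from $\varepsilon_{I,J}$ only by flipping the parity of the $t$-th slot from $\p{i_t}$ to $\p{i_t}+1$; inspecting the defining formula for $\varepsilon_{I,J}$ in Proposition \ref{functorofqweb} gives $\varepsilon_{I,J}+\varepsilon'_{I,J,t}\equiv \sum_{l\in I,\,l>t}\p{i_l}\pmod 2$.

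Finally I would collect all the signs. The total exponent of $-1$ is
\[
\varepsilon_{I,J}+\varepsilon'_{I,J,t}+\sum_{l\in I}\p{i_l}+\sum_{k\in J,\,k\le t}\p{i_k},
\]
which, using the relation above and $t\notin I$, collapses modulo $2$ to $\sum_{k\le t}\p{i_k}$, independent of $I$. Summing $\prod_{l\in I}y_{i_l}$ over all $I\subseteq\{1,\dots,a\}\setminus\{t\}$ of size $r$ then produces $e_r(y_{i_1},\dots,\widehat{y_{i_t}},\dots,y_{i_a})$, yielding the asserted formula. The main obstacle is the careful sign bookkeeping in the graded setting; in particular, overlooking the Koszul sign from $\bdota\otimes\odota$ would leave an $I$-dependent factor $(-1)^{\sum_{l\in I}\p{i_l}}$ inside the sum, preventing the elementary symmetric polynomial from emerging.
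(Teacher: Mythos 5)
Your proposal is correct, and the sign identity you isolate does hold: writing $\varepsilon_{I,J}=\sum_{l\in I,\,k\in J,\,l>k}\p{i_l}\p{i_k}$, flipping the parity in slot $t\in J$ changes exactly the terms with $l>t$, so $\varepsilon_{I,J}+\varepsilon'_{I,J,t}\equiv\sum_{l\in I,\,l>t}\p{i_l}$, and combining with the Koszul sign $(-1)^{\sum_{l\in I}\p{i_l}}$ and the sign $(-1)^{\sum_{k\in J,\,k\le t}\p{i_k}}$ from $\Psi(\odota)$ and $c$ indeed collapses to $(-1)^{\p{i_1}+\cdots+\p{i_t}}$, independent of $I$. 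The route is genuinely different from the paper's, though. You expand $\omega^\circ_{a,r}$ via its defining $(r,a-r)$ split in \eqref{bw-dot}, put the bare black dot $\omega_{r,r}$ on the thickness-$r$ leg so that Corollary \ref{actionofelempoly} contributes only the monomial $\prod_{l\in I}y_{i_l}$, and then reassemble $e_r(y_{i_1},\ldots,\widehat{y_{i_t}},\ldots,y_{i_a})$ by summing over the $r$-subsets $I\subseteq\{1,\ldots,a\}\setminus\{t\}$ produced by the split. The paper instead uses the equivalent presentation of $\omega^\circ_{a,r}$ as the split $a\to(a-1,1)$ with $\omega_r$ on the thick leg and the white dot on the thin leg: the split of $v_{\mathbf i}$ then singles out $v_{\mathbf i_t}\otimes v_{i_t}$ directly, Corollary \ref{actionofelempoly} applied to the $(a-1)$-strand produces the full elementary symmetric polynomial $e_r(y_{i_1},\ldots,\widehat{y_{i_t}},\ldots,y_{i_a})$ in one step, and the only remaining sign is the Koszul sign of the single white dot on the thin strand. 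The paper's choice buys a much lighter sign analysis (no shuffle signs $\varepsilon_{I,J}$, $\varepsilon'_{I,J,t}$ to compare); your choice works directly from the definition of $\omega^\circ_{a,r}$ without invoking the alternative presentation, at the cost of the more delicate but correct bookkeeping you carried out. One point worth making explicit if you write this up: the application of Corollary \ref{actionofelempoly} to the $r$-leg is legitimate because $\mathcal F$ sends $\bdot_r\otimes 1_{a-r}$ to $\Omega_{M\otimes S^rV}\otimes\mathrm{id}_{S^{a-r}V}$, so the spectator factor does not interfere, and the subsequent maps preserve the degree filtration so the $\equiv$ is not disturbed.
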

\begin{proof} We have 
$$
\begin{aligned}
\omega^\circ_{a,r}
        (u\otimes h\otimes v_{\mathbf i})
&=
\begin{tikzpicture}[baseline = -.5mm, scale=1.2, color=\clr]
   			\draw[-,line width=1pt](-.15,-.2) to (-.15,0);
                \draw[-,line width=1pt](.15,-.2) to (.15,0);
   			\draw[-,line width=1pt] (-.15,0) to [out=90,in=-180] (0,.25);
   			\draw[-,line width=1pt] (.15,0) to [out=90,in=0] (0,.25);
   			\draw[-,line width=2pt](0,.45) to (0,0.25);
   			\draw (-0.15,0) \bdot;
   			\draw (0.15,0) \wdot;
   			\draw (-0.4,0) node{$\scriptstyle r$};
   			\node at (-.15,-.3) {$\scriptstyle a-1$};
                \node at (.15,-.3) {$\scriptstyle 1$};
\end{tikzpicture}
        (\sum^{a}_{t=1}\epsilon_t u\otimes h\otimes v_{\mathbf i_t}\otimes v_{i_t})
        \\
        & \equiv
\begin{tikzpicture}[baseline = -.5mm, scale=1.2, color=\clr]
   			\draw[-,line width=1pt](-.15,-.2) to (-.15,0);
                \draw[-,line width=1pt](.15,-.2) to (.15,0);
   			\draw[-,line width=1pt] (-.15,0) to [out=90,in=-180] (0,.25);
   			\draw[-,line width=1pt] (.15,0) to [out=90,in=0] (0,.25);
   			\draw[-,line width=2pt](0,.45) to (0,0.25);
   			\draw (0.15,0) \wdot;
   			\node at (-.15,-.3) {$\scriptstyle a-1$};
                \node at (.15,-.3) {$\scriptstyle 1$};
   	\end{tikzpicture}
        (\sum^{a}_{t=1}\epsilon_t u\otimes e_r(y_{i_1},\ldots,\widehat{y_{i_t}},\ldots, y_{i_a})h\otimes v_{\mathbf i_t}\otimes v_{i_t}) \quad \text{ by Cor. \ref{actionofelempoly}} \\
&=
\begin{tikzpicture}[baseline = -.5mm, scale=1.2, color=\clr]
   			\draw[-,line width=1pt](-.15,-.2) to (-.15,0);
                \draw[-,line width=1pt](.15,-.2) to (.15,0);
   			\draw[-,line width=1pt] (-.15,0) to [out=90,in=-180] (0,.25);
   			\draw[-,line width=1pt] (.15,0) to [out=90,in=0] (0,.25);
   			\draw[-,line width=2pt](0,.45) to (0,0.25);
   			\node at (-.15,-.3) {$\scriptstyle a-1$};
                \node at (.15,-.3) {$\scriptstyle 1$};
   	\end{tikzpicture}
        (\sum^{a}_{t=1}\sqrt{-1}(-1)^{\p{i_1}+\ldots+\p{i_a}+\p{i_t}}\epsilon_t u\otimes e_r(y_{i_1},\ldots,\widehat{y_{i_t}},\ldots, y_{i_a})h\otimes v_{\mathbf i_t}\otimes v_{\bar{i_t}})
\\
&=
      \sum^{a}_{t=1}\sqrt{-1}(-1)^{\p{i_1}+\ldots+\p{i_t}}u\otimes e_r(y_{i_1},\ldots,\widehat{y_{i_t}},\ldots, y_{i_a})h\otimes v_{i_1}\ldots v_{\bar{i_t}}\ldots v_{i_a},
\end{aligned}
$$
where $\epsilon_t=(-1)^{\p{i_t}(\p{i_{t+1}}+\ldots+\p{i_a})}$.
\end{proof}
For any $k\in \Z_{\ge1}$, let 
$\SPar_{a,k}=\{\lambda\in \SPar_a\mid l(\lambda)=k\}$. Recall $\bar\lambda$ in \eqref{equ:defofbarlambda}.
Using  Corollary \ref{action-one-ball} repeatedly,   the action of $\omega^\circ _{\bar\lambda} $ for any $\lambda\in \SPar_{a,k}$ can be written of the following form  
\begin{equation}
\label{dots-change +-}
\begin{tikzpicture}[baseline = 3pt, scale=0.5, color=\clr]
		\draw[-,line width=1.5pt] (0,0) to[out=up, in=down] (0,1.4);
		\draw(0,0.6) \bdot; 
		\draw (0.7,0.6) node {$\scriptstyle \omega^\circ_{\bar\lambda}$};
		\node at (0,-.3) {$\scriptstyle a$};
\end{tikzpicture}
        (u\otimes h\otimes v_{\mathbf i})
        \equiv \sum_{\mathbf j }u\otimes F_\mathbf j h\otimes v_\mathbf j,
\end{equation}
where the sum runs over all sequence $\mathbf j=(j_1,\ldots,j_a)\in I^a$ with $j_t\in\{i_t,\bar{i_t}\}$ for all $1\le t\le a$ such that there are  at most $k$ numbers of  $j_t$'s belonging  to $\{\bar{i_1},\ldots,\bar{i_a}\}$  and $F_\mathbf{j}\in \mathbb{Z}[y_1,\ldots,y_a]$ with degree $|\bar\lambda|$.

Define
\begin{equation}
\label{equ:defofrhok}
  \rho_k=(k-1,k-2,\ldots,1,0). 
\end{equation} 

For any subset $S\subseteq I_k:=\{1,\cdots,k\}$ with $|S|=s$, let $S=\{c_1,\cdots,c_s\}$, where $c_1<c_2<\ldots <c_s$, $\mathfrak{S}_S:=\{(\sigma_1,\ldots,\sigma_s)|\{\sigma_1,\ldots,\sigma_s\}=S\}$, $ \epsilon_\sigma:=\sum_{i=1}^{s}\sharp\{t|\sigma_t<\sigma_i,t<i\}$, $y_S=\{y_{i_j}|j\in S\}$ and $y_{S^c}$ is the complementary set of $y_S$ in $\{y_{i_1},\ldots,y_{i_a}\}$. For any $\sigma\in \mathfrak{S}_S$, write   $$
  f_\sigma=\prod_{i=0}^{s-1} e_{\bar\lambda_{k-i}}(-y_{i_{\sigma_1}},\cdots,-y_{i_{\sigma_i}},\widehat{y_{i_{\sigma_{i+1}}}},y_{i_{\sigma_{i+2}}},\ldots,y_{i_{\sigma_{s}}},y_{i'_1},\ldots,y_{i'_{a-s}}),
  $$
where  $\{y_{i'_1},\ldots,y_{i'_{a-s}}\}:=\{y_{i_j}|j=1,\ldots,a\}-y_S$.
\begin{proposition}
\label{algorithm}
   For $\mathbf j=(\bar{i_1},\ldots,\bar{i_k},i_{k+1},\ldots,i_a)$ in \eqref{dots-change +-}  we have
\begin{equation}
\label{action-w0lambda}
F_{\mathbf j}=(\sqrt{-1})^k\big(\prod_{t=1}^k(-1)^{\p{i_1}+\ldots+\p{i_t}}\big)\Delta(y_{i_1},\ldots,y_{i_k})g_\lambda\:,
\end{equation}
where $\Delta(y_{i_1},\ldots,y_{i_k})=\prod_{1\le a<b\le k}(y_{i_a}-y_{i_b})$ and
\begin{equation}
\label{lead-terms-glambda}
   g_\lambda=e_{\bar\lambda-\rho_k}(y_{i_{k+1}},\ldots,y_{i_a})+\sum_{\mu\rhd\bar\lambda-\rho_k}d_\mu e_\mu(y_{i_{k+1}},\ldots,y_{i_a}),  \text{ for some }d_\mu\in\mathbb Z.
\end{equation}
\end{proposition}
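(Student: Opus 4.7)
The plan is to iterate Corollary~\ref{action-one-ball} $k$ times, identify the resulting alternating sum as a Vandermonde times a symmetric polynomial, and extract the claimed leading term. Applying $\omega^\circ_{a,\bar\lambda_k}, \omega^\circ_{a,\bar\lambda_{k-1}}, \ldots, \omega^\circ_{a,\bar\lambda_1}$ in order (rightmost first) to $u \otimes h \otimes v_{\mathbf i}$ and extracting the coefficient of $v_{\bar i_1} \cdots v_{\bar i_k} v_{i_{k+1}} \cdots v_{i_a}$, each application must bar exactly one position in $\{1, \ldots, k\}$, and summing over the order $\sigma = (\sigma_1, \ldots, \sigma_k) \in \mathfrak{S}_{\{1,\ldots,k\}}$ in which they are barred yields
\[
F_{\mathbf j} = (\sqrt{-1})^k \sum_{\sigma \in \mathfrak{S}_{\{1,\ldots,k\}}} \varepsilon(\sigma)\, f_\sigma,
\]
where $f_\sigma$ is as defined before the statement of the proposition.

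For the sign $\varepsilon(\sigma)$, at step $s$ the operator $\omega^\circ_{a,\bar\lambda_{k-s+1}}$ bars position $\sigma_s$ and contributes by Corollary~\ref{action-one-ball} a factor $(-1)^{\p{j_1} + \cdots + \p{j_{\sigma_s}}}$ with $j_t$ the current index at position $t$. Each earlier-barred position $\sigma_j$ with $j < s$ and $\sigma_j < \sigma_s$ contributes an extra $-1$ via the parity flip $\p{\bar i} = \p{i} + 1$; the product of these extra factors over all $s$ equals $(-1)^{\mathrm{inv}(\sigma)} = \mathrm{sgn}(\sigma)$, while the static parity parts telescope (using $\{\sigma_s\}_{s=1}^k = \{1,\ldots,k\}$) to $\prod_{t=1}^k (-1)^{\p{i_1} + \cdots + \p{i_t}}$. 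Thus $\varepsilon(\sigma) = \mathrm{sgn}(\sigma)\prod_{t=1}^k (-1)^{\p{i_1} + \cdots + \p{i_t}}$. The sum $\sum_\sigma \mathrm{sgn}(\sigma) f_\sigma$ is antisymmetric in $y_{i_1}, \ldots, y_{i_k}$, since transposing $y_{i_p} \leftrightarrow y_{i_q}$ for $p \ne q \le k$ induces a sign-reversing bijection on $\mathfrak{S}_{\{1,\ldots,k\}}$; hence it is divisible by $\Delta(y_{i_1}, \ldots, y_{i_k})$, and we take $g_\lambda$ to be the quotient, establishing \eqref{action-w0lambda}.

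To identify the leading term \eqref{lead-terms-glambda}, expand each factor using the recursion $e_r(\pm y, z_1, \ldots) = \pm y\cdot e_{r-1}(z_1, \ldots) + e_r(z_1, \ldots)$. The top-degree $\binom{k}{2}$ contribution in $y_{i_1}, \ldots, y_{i_k}$ to $\sum_\sigma \mathrm{sgn}(\sigma) f_\sigma$ arises from selecting the maximally concentrated monomial $\prod_{j<i}(-y_{i_{\sigma_j}})$ inside the $i$-th factor of $f_\sigma$; the alternating sum over $\sigma$ of these selections matches the Leibniz expansion of $\Delta(y_{i_1},\ldots,y_{i_k})$, and the leftover elementary-symmetric factor works out to $\prod_i e_{\bar\lambda_{k-i+1} - (i-1)}(y_{i_{k+1}}, \ldots, y_{i_a}) = e_{\bar\lambda - \rho_k}(y_{i_{k+1}}, \ldots, y_{i_a})$. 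Lower-degree choices of $y_{i_1},\ldots,y_{i_k}$-monomials redistribute into $e_\mu(y_{i_{k+1}}, \ldots, y_{i_a})$ with $\mu \rhd \bar\lambda - \rho_k$ in dominance order.

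The main obstacle lies in this final step: rigorously verifying both that $e_{\bar\lambda - \rho_k}$ appears with coefficient exactly $1$ and that every other $\mu$ arising in the expansion strictly dominates $\bar\lambda - \rho_k$ will require a delicate combinatorial bookkeeping of how the $e_r$-expansions redistribute degree across the $k$ factors and across the two variable groups. The strict-partition hypothesis on $\lambda$ (ensuring $\bar\lambda - \rho_k$ has nonnegative parts) is essential here to prevent cancellation with the leading term. By contrast, the sign analysis in the middle step, though intricate, is routine once one sets up the induction on $s$ carefully.
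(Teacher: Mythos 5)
Your first step matches the paper's: iterating Corollary \ref{action-one-ball} does produce $F_{\mathbf j}=(\sqrt{-1})^k\sum_{\sigma}\varepsilon(\sigma)f_\sigma$, and your antisymmetry observation is a clean way to see divisibility by $\Delta(y_{i_1},\ldots,y_{i_k})$. (Minor point: the pairs you count, $j<s$ with $\sigma_j<\sigma_s$, are non-inversions, so your sign is $(-1)^{\binom{k}{2}}\mathrm{sgn}(\sigma)$ rather than $\mathrm{sgn}(\sigma)$; this global factor is absorbed by $\Delta(-y_{\bullet})=(-1)^{\binom{k}{2}}\Delta(y_{\bullet})$, but it needs to be tracked.)

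The genuine gap is in the second half. Divisibility by the Vandermonde only tells you that the quotient is a \emph{symmetric polynomial in $y_{i_1},\ldots,y_{i_k}$}; the proposition asserts much more, namely that $g_\lambda$ involves \emph{only} $y_{i_{k+1}},\ldots,y_{i_a}$ and has the triangular form \eqref{lead-terms-glambda}. This is not a consequence of antisymmetry: each factor of $f_\sigma$ is multilinear in $k-1$ of the barred variables, so an individual $f_\sigma$ can have degree up to $k(k-1)$ in $y_{i_1},\ldots,y_{i_k}$, strictly exceeding $\deg\Delta=\binom{k}{2}$ whenever the $\bar\lambda_j$ are large. Hence your "top-degree $\binom{k}{2}$ contribution" is not the top degree of the sum, and the claim that the remaining choices "redistribute into $e_\mu$ with $\mu\rhd\bar\lambda-\rho_k$" silently assumes the cancellation of all higher barred-degree terms, which is precisely the hard part. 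The paper resolves this by an induction on subsets $S\subseteq\{1,\ldots,k\}$, at each step expanding the new factor and the previously accumulated quotient in powers of the newly added variable and invoking the determinantal identity $\Delta(x_1,\ldots,x_s)=\det\bigl(e_{j-1}(\hat x_i)\bigr)$ together with its cofactor expansion $\sum_{r}(-1)^{r+t}B'_{r,h}M'_{r,t}=\delta_{h,t}\det B'$; it is this Kronecker delta that kills every cross term and forces the quotient to be free of the barred variables. A second, separate induction (via raising operators $R_{i,j}$) is then needed to show the surviving terms are exactly $\sum_{R}e_{R(\bar\lambda-\rho_k)}$ with $R(\bar\lambda-\rho_k)\trianglerighteq\bar\lambda-\rho_k$. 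Your closing disclaimer acknowledges that bookkeeping remains, but the mechanism you propose (selecting maximal monomials and matching a Leibniz expansion of $\Delta$) does not supply either of these two inductions, so the proof as written is incomplete at its central step.
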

\begin{proof}
  
  Using   Corollary \ref{action-one-ball} repeatedly, we have   \[F_\mathbf j=(\sqrt{-1})^k\big(\prod_{t=1}^k(-1)^{\p{i_1}+\ldots+\p{i_t}}\big) \sum_{\sigma\in\mathfrak{S}_{I_k}}(-1)^{\epsilon_\sigma}f_\sigma \:.\]

  Denote $\lambda_{>r}=(\lambda_{r+1},\lambda_{r+2},\ldots,\lambda_k)$. It suffices to show  
  \begin{equation}
  \label{pre-algorithm}
        \sum_{\sigma\in\mathfrak{S}_{S}}(-1)^{\epsilon_\sigma}f_\sigma=\Delta(y_S)g_{\lambda_{>(k-s)}}(y_{S^c})
  \end{equation}
 for any $S=\{c_1<c_2<\ldots,c_s\}\subset I_k$ with $|S|=s$, 
 where 
  \begin{equation}
  \label{bound condition}
  g_{\lambda_{>(k-s)}}(y_{S^c})=e_{\bar\lambda_{>(k-s)}-\rho_s}(y_{S^c})+\sum_{\mu\rhd\bar\lambda_{>(k-s)}-\rho_{s}}d_\mu e_\mu(y_{S^c}), d_\mu\in\mathbb Z.
\end{equation}
  
  We prove \eqref{pre-algorithm} and \eqref{bound condition} by induction on $s=|S|$. The case $s=1$ is known by Corollary \ref{action-one-ball}. 
  In general, for any $x\in S$, set $S(x)=S\setminus \{x\}$. For each $\sigma\in \mathfrak{S}_S$, $\exists \:r(\sigma)$ such that  $\sigma_{s}=c_{r(\sigma)}$. Moreover,  $\sharp\{t|\sigma_t<\sigma_{s},1\le t< s\}=r(\sigma)-1$.
  By induction hypothesis on $|S(x)|=s-1$, we have 
\begin{align*}
    &\sum_{\sigma\in\mathfrak{S}_{S}}(-1)^{\epsilon_\sigma}f_\sigma
    =
    \sum_{\sigma_{s}\in {S}}(-1)^{r(\sigma)-1}\sum_{\sigma_{<s}\in\mathfrak{S}_{S(\sigma_{s})}}(-1)^{\epsilon_\sigma}
    f_\sigma
    \\
    &=\sum_{\sigma_{s}\in {S}}(-1)^{r(\sigma)-1}\Delta(y_{S(\sigma_s)})g_{\lambda_{>(k-s+1)}}(y_{S(\sigma_{s})^c})e_{\bar\lambda_{k-s+1}}(-y_{i_{\sigma_1}},\ldots,-y_{i_{\sigma_{s-1}}},\widehat{y_{i_{\sigma_{s}}}},y_{i'_1}\ldots,y_{i'_{a-s}}),
\end{align*}
where $\sigma_{<s}=(\sigma_1,\cdots,\sigma_{s-1})$. 
Using the well-known fact about the elementary symmetric polynomials  
\begin{equation}
\label{er-sum}    
e_r(x_1,\ldots,x_k,x_{k+1},\ldots,x_a)=\sum_{i=0}^{min\{k,r\}}e_i(x_1,\ldots,x_k)e_{r-i}(x_{k+1},\ldots,x_a),
\end{equation} we have 
$g_{\lambda_{>(k-s+1)}}(y_{S(\sigma_{s})^c})=\sum_{t=1}^{s}y^{s-t}_{i_{\sigma_{s}}}A_{s-t}$  and 
$$
e_{\bar\lambda_{k-s+1}}(-y_{i_{\sigma_1}},\ldots,-y_{i_{\sigma_{s-1}}},\widehat{y_{i_{\sigma_{s}}}},y_{i'_1}\ldots,y_{i'_{a-s}})
=\sum_{h=1}^{s}e_{h-1}(-y_{i_{\sigma_1}},\ldots,-y_{i_{\sigma_{s-1}}})e'_{\bar\lambda_{k-s+1}-h+1},
$$ 
where $A_t, e'_s$ are symmetric polynomials of $y_{i'_1},\ldots,y_{i'_{a-s}}$ and do not depends on the choice of $\sigma\in\mathfrak{S}_S$. 

To compute $\Delta(y_{i_{\sigma_1}},\ldots,y_{i_{\sigma_{s-1}}})g_{\lambda_{>(k-s+1)}}(y_{S(\sigma_{s})^c)})$, we 
    define the  matrix $B=\{e_{j-1}(\hat{x_i})\}_{i,j=1,\ldots,s}$, where $e_{j-1}(\hat{x_i})$ is the $(j-1)$th elementary symmetric polynomial of $\{x_1,\ldots,\hat{x_i},\ldots,x_{s}\}$. Thanks to \cite{STA}, we have 
    \begin{equation}
    \label{equ:determi}
      \Delta(x_1,\ldots,x_{s})=\det(B).  
    \end{equation}
Direct computation by using \eqref{equ:determi} and \eqref{er-sum}  yields the following equality for     
    the  $(i,j)$-minor $M_{i,j}$ of $B$: 
\begin{equation}
\label{equ:minorM}
   M_{i,j}=x_i^{s-j}\prod_{\substack{1\le b<c\le s\\b,c\ne i}}(x_b-x_c)=x_i^{s-j}\Delta(x_1,\ldots,\hat{x_i},\ldots,x_{s}).  
\end{equation}

Let $B'$ be the matrix obtained from matrix $B$ by replacing all $x_j$ with $-y_{i_{s_j}}$, and $M'_{i,j}$ be the $(i,j)$-minor of $B'$. 
Then
\begin{equation}
\label{induction by minor}
\begin{aligned}
\sum_{\sigma\in\mathfrak{S}_S}(-1)^{\epsilon_\sigma}f_\sigma
&=
\sum_{\sigma_{s}\in S}(-1)^{r-1}
\sum_{t,h=1}^{s}y_{i_{\sigma_{s}}}^{s-t}\Delta(y_{i_{\sigma_1}},\ldots,y_{i_{\sigma_{s-1}}})e_{h-1}(-y_{i_{\sigma_1}},\ldots,-y_{i_{\sigma_{s-1}}})A_{s-t}e'_{\bar\lambda_{k-s+1}-h+1}\\
&\overset{\eqref{equ:determi}}{\underset{\eqref{equ:minorM}}{=}}\sum_{\sigma_{s}\in S}(-1)^{r(\sigma)-1}\sum_{t,h=1}^{s} (-1)^{s-t}(-1)^{(s-1)(s-2)/2}B'_{r(\sigma),h}M'_{r(\sigma),t}A_{s-t}e'_{\bar\lambda_{k-s+1}-h+1}\\
&=\sum_{t,h=1}^{s}(-1)^{s(s-1)/2}(\sum_{\sigma_{s}\in S} (-1)^{r(\sigma)+t}B'_{r(\sigma),h}M'_{r(\sigma),t})A_{s-t}e'_{\bar\lambda_{k-s+1}-h+1}\\
&=\sum_{t=1}^{s}(-1)^{s(s-1)/2}\Delta(-y_{i_{c_1}},\ldots,-y_{i_{c_{s}}})A_{s-t}e'_{\bar\lambda_{k-s+1}-t+1}\\
&=\Delta(y_{i_{c_1}},\ldots,y_{i_{c_{s}}})\sum_{t=1}^{s}A_{s-t}e'_{\bar\lambda_{k-s+1}-t+1},
\end{aligned}
\end{equation}
where the forth identity is due to the identity  $\sum_{r(\sigma)=1}^{s}(-1)^{r(\sigma)+t}B'_{r(\sigma),h}M'_{r(\sigma),t}=\delta_{h,t}\det B'$.

Then it suffices to prove $\sum_{t=1}^{s}A_{s-t}e'_{\bar\lambda_{k-s+1}-t+1}$ satisfies the form in the RHS of \eqref{bound condition}. Furthermore, we only need to prove the special $S=I_r, \text{ where }r=1,\cdots,k$, as the following recursive proof is essentially an induction of the cardinality of set $S$, independent of the selection of elements in $S$. Note that \eqref{bound condition} equals to \eqref{lead-terms-glambda} when $S=I_k$.
The computation in \eqref{induction by minor} implies an algorithm for calculating $g_r:=g_{\lambda>(k-|I_r|)}(y_{(I_r)^c})$ as follows.  Let 
 $g_1=e_{\bar\lambda_k}(\hat{y_{i_1}},y_{i_2},\ldots,y_{i_2})$.
 For any $1\le i\le k-1$,
 we define $A_0,A_{1},\ldots,A_{i-1}$ such that $g_r=\sum_{j=0}^{r}y_{i_{r+1}}^{j}A_{j}$ and let $g_{r+1}=\sum_{j=0}^{r}A_{j}e_{\bar\lambda_{k-r}+j-r}(y_{i_{r+2}},\ldots,y_{i_a})$, where $2\le r\le k$.
  Then by  \eqref{induction by minor} we have  $g_\lambda=g_k$.

We begin to show \eqref{lead-terms-glambda} by using the above algorithm to compute $g_r,\:r=2,\cdots,k$. 
For each pair of integers $i,j$ such that $1\le i<j\le k$ define the  raising operators
\begin{equation}
\label{equ:rasingop}
\begin{aligned}
    R_{i,j}:\mathbb{Z}^r&\rightarrow\mathbb{Z}^r\\
    (a_1,\ldots,a_i,\ldots,a_j,\ldots, a_r) &\mapsto (a_1,\ldots,a_i+1,\ldots,a_j-1,\ldots,a_r). 
\end{aligned}
\end{equation}
Let $\mathfrak{R}_r=\{\prod_{1\le i<j\le r}R_{i,j}^{r_{i,j}}|r_{i,j}\in\{0,1\}\}$.
Note that \eqref{lead-terms-glambda} follows from the  claim that $g_r=\sum e_\mu(y_{i_{r+1}},\ldots,y_{i_a})$, where the sum runs over all elements of $\mu=R(\bar\lambda_{>(k-r)}-\rho_r), R\in \mathfrak{R}_r$. Here we set  $e_\mu(y_{i_{r+1}},\ldots,y_{i_a})=0$ if $\mu_j<0$ for some $j\in\{1,\ldots,l(\mu)\}$. 

We prove the claim by induction on $r$. The case of $r=1$ is trivial.
Denote $I(j)=\{(\alpha_1,\ldots,\alpha_r)| \alpha_t\in\{0,1\},\forall t \text{ and } \sum_{t=1}^r\alpha_t=j\}$, $\bar\lambda(r):=\bar\lambda_{>(k-r)}=(\bar\lambda_{k+1-r},\ldots,\bar\lambda_k)$.
By induction hypothesis and the above algorithm, we have 
$$
g_r=\sum_{R\in\mathfrak{R}_k}e_{R(\bar\lambda(r)-\rho_r)}(y_{i_{r+1}},\ldots,y_{i_a})\overset{\eqref{er-sum}}{=} \sum_{R\in\mathfrak{R}_k}\sum_{j=0}^{r}\sum_{\alpha\in I(j)} y_{i_{r+1}}^{j}e_{R(\bar\lambda(r)-\rho_r)-\alpha}(y_{i_{r+2}},\ldots,y_{i_a}),
$$
This means $A_{j}=\sum_{R\in\mathfrak{R}_r}\sum_{\alpha\in I(j)} e_{R(\bar\lambda(r)-\rho_r)-\alpha}(y_{i_{r+2}},\ldots,y_{i_a})$.
 Then
    \begin{align*}
        g_{r+1}&=\sum_{j=0}^{r}A_{j}e_{\bar\lambda_{k-r}+j-r}(y_{i_{r+2}},\ldots,y_{i_a})\\
        &=\sum_{j=0}^{r}\big(\sum_{R\in\mathfrak{R}_r}\sum_{\alpha\in I(j)} e_{R(\bar\lambda(r)-\rho_r)-\alpha}(y_{i_{r+2}},\ldots,y_{i_a})\big)e_{\bar\lambda_{k-r}+j-r}(y_{i_{r+2}},\ldots,y_{i_a})\\
        &=\sum_{j=0}^{r}\sum_{R\in\mathfrak{R}_r}\sum_{2\le h_1<\ldots<h_t\le r+1}e_{R_{1,h_1}R_{1,h_2}\ldots R_{1,h_j}\iota (R)(\bar\lambda(r+1)-\rho_{r+1})}(y_{i_{r+2}},\ldots,y_{i_a})\\
        &=\sum_{R\in\mathfrak{R}_{r+1}}e_{R(\bar\lambda(r+1)-\rho_{r+1})}(y_{i_{r+2}},\ldots,y_{i_a}),
    \end{align*}
where   $\iota (R_{i,j})=R_{i+1,j+1}$ for all $1\le i<j\le r-1$.
This completes the proof of the claim and hence \eqref{lead-terms-glambda} and \eqref{bound condition} hold
since  $R(\bar\lambda-\rho_k)\trianglerighteq\bar\lambda-\rho_k,\forall R\in\mathfrak{R}_k$.
\end{proof}
 For $\lambda\in \SPar_{a,k}, \mu\in\Par_a$, the action of $\omega^\circ_{\bar\lambda}\omega_\mu$ can be written as  
 \begin{equation}
\label{action-lambda and mu}
\begin{tikzpicture}[baseline = -1mm,scale=1,color=\clr]
  \draw[-,line width=1.2pt] (0.08,-.5) to (0.08,.5);
  \node at (.08,-.7) {$\scriptstyle a$};
  \draw(0.08,0.2) \bdot;
  \draw(.45,.2)node {$\scriptstyle \omega^\circ_{\bar\lambda}$};
  \draw(0.08,-.2) \bdot;
  \draw(.45,-.2)node {$\scriptstyle \omega_{\mu}$};
\end{tikzpicture}
        (u\otimes h\otimes v_\mathbf i)
        \equiv \sum_{\mathbf j }u\otimes \tilde F_\mathbf j h\otimes v_\mathbf j,
\end{equation}
 where the sum runs over all sequence $\mathbf j=(j_1,\ldots,j_a)\in I^a$ with $j_t\in\{i_t,\bar{i_t}\}$ for all $1\le t\le a$ such that there are  at most $k$ numbers of  $j_t$'s belonging  to $\{\bar{i_1},\ldots,\bar{i_a}\}$  and $\tilde F_\mathbf{j}\in \mathbb{Z}[y_{i_1},\ldots,y_{i_a}]$ with degree $|\bar\lambda|+|\mu|$.

\begin{corollary}
\label{dotpacket-action}
For $\mathbf j=(\bar i_1,\ldots, \bar i_k, i_{k+1}, \ldots, i_a)$ in \eqref{action-lambda and mu}, we have 
   \[\tilde F_\mathbf j=(\sqrt{-1})^k\prod_{t=1}^k(-1)^{\p{i_1}+\ldots+\p{i_t}}\Delta(y_{i_1},\ldots,y_{i_k})g_\lambda e_\mu(y_{i_1},\ldots,y_{i_a}).\]
\end{corollary}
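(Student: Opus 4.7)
The plan is to exploit the fact that the morphism $\omega^\circ_{\bar\lambda}\,\omega_\mu$ is by definition the composition in which the (lower) dot packet $\omega_\mu=\omega_{a,\mu_1}\cdots\omega_{a,\mu_t}$ acts first on an input vector of the form $u\otimes h\otimes v_{\mathbf i}\in M^{\text{gen}}\otimes S^a(V)$, and the (upper) ball packet $\omega^\circ_{\bar\lambda}$ acts afterwards. Since the claimed leading-term formula is an $\equiv$-statement modulo lower filtered degree, it suffices to track only the top degree contributions at each step.

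First, iterating Corollary~\ref{actionofelempoly} for each of the factors $\omega_{a,\mu_s}$ and using that all of the output terms live in $U(\mathfrak h_{\bar 0})\otimes v_{\mathbf i}$ (so that subsequent factors of $\omega_{a,\mu_{s+1}}$ act again by the same formula on the undisturbed tensor factor $v_{\mathbf i}$), I obtain
\[
\omega_\mu(u\otimes h\otimes v_{\mathbf i})\;\equiv\;u\otimes e_{\mu}(y_{i_1},\ldots,y_{i_a})\,h\,\otimes\,v_{\mathbf i},
\]
where $e_\mu(x_1,\ldots,x_a)=\prod_s e_{\mu_s}(x_1,\ldots,x_a)$. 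Here the key point is that each individual $\omega_{a,\mu_s}$ preserves the $v_{\mathbf i}$ tensor factor on the leading term, so the elementary symmetric factors simply accumulate multiplicatively on the $U(\mathfrak h_{\bar 0})$-side.

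Next, I apply Proposition~\ref{algorithm} with the input $h$ replaced by $e_\mu(y_{i_1},\ldots,y_{i_a})\,h$. Writing \eqref{dots-change +-} out for the distinguished sequence $\mathbf j=(\bar i_1,\ldots,\bar i_k,i_{k+1},\ldots,i_a)$, Proposition~\ref{algorithm} gives
\[
\omega^\circ_{\bar\lambda}\bigl(u\otimes e_\mu(y_{i_1},\ldots,y_{i_a})\,h\otimes v_{\mathbf i}\bigr)
\]
with the $v_{\mathbf j}$-coefficient equal to
\[
(\sqrt{-1})^k\Bigl(\prod_{t=1}^k(-1)^{\p{i_1}+\ldots+\p{i_t}}\Bigr)\,\Delta(y_{i_1},\ldots,y_{i_k})\,g_\lambda\cdot e_\mu(y_{i_1},\ldots,y_{i_a})\,h,
\]
up to lower degree. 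To conclude, I use that $e_\mu(y_{i_1},\ldots,y_{i_a})$ lies in the commutative subsuperalgebra $U(\mathfrak h_{\bar 0})$, so it commutes with $F_{\mathbf j}=(\sqrt{-1})^k\prod(-1)^{\p{i_1}+\ldots+\p{i_t}}\Delta(y_{i_1},\ldots,y_{i_k})g_\lambda$; consequently the combined coefficient is exactly the product $F_{\mathbf j}\cdot e_\mu(y_{i_1},\ldots,y_{i_a})$ claimed. The only nontrivial point to double-check is that the degree of $e_\mu(y_{i_1},\ldots,y_{i_a})$, namely $|\mu|$, adds correctly with the degree $|\bar\lambda|$ of $F_{\mathbf j}$ from Proposition~\ref{algorithm} to match the asserted total degree $|\bar\lambda|+|\mu|$ in \eqref{action-lambda and mu}, and that no cross terms of equal top degree are lost in the $\equiv$-congruence. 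Both facts are immediate from Lemma~\ref{lem:actionofdotlead} (each $\omega_{a,r}$ and each $\omega^\circ_{a,r}$ acts with filtered degree equal to its subscript), so no further work is required.
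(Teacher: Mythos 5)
Your proposal is correct and follows essentially the same route as the paper, whose proof is the one-line observation that the statement follows directly from Corollary \ref{actionofelempoly} and Proposition \ref{algorithm}; you have simply spelled out the intermediate steps (iterating the $e_\mu$-multiplication on the $U(\mathfrak h_{\bar 0})$-factor, then reading off the $v_{\mathbf j}$-coefficient from Proposition \ref{algorithm}, with the degree bookkeeping ensuring lower-order terms cannot contaminate the leading term).
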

\begin{proof}
   This follows directly from Corollary \ref{actionofelempoly} and Proposition \ref{algorithm}.
\end{proof}

\begin{remark}
\label{remark_atmostk}
    If we define $\psi(\mathbf{i},\mathbf{j})=\sharp\{s|j_s=\bar{i_s},s\in\{1,\ldots,a\}\}$, then \eqref{action-lambda and mu} becomes 
$$
\label{leadterm-action}
   \begin{tikzpicture}[baseline = -1mm,scale=1,color=\clr]
   \draw[-,line width=1.5pt] (0.08,-.5) to (0.08,.5);
   \node at (.08,-.7) {$\scriptstyle a$};
   \draw(0.08,0.2) \bdot;
   \draw(.45,.2)node {$\scriptstyle \omega^\circ_{\bar\lambda}$};
   \draw(0.08,-.2) \bdot;
   \draw(.45,-.2)node {$\scriptstyle \omega_{\mu}$};
\end{tikzpicture}
        (u\otimes h\otimes v_{\mathbf i})
        \equiv \sum_{\underset{\phi(\mathbf{i},\mathbf{j})=k}{\mathbf j}}u\otimes \tilde F_\mathbf j h\otimes v_\mathbf j +\sum_{\underset{\phi(\mathbf{i},\mathbf{j})<k}{\mathbf j} }u\otimes \tilde F_\mathbf j h\otimes v_\mathbf j.
$$
This implies that
$g_{\lambda,\mu}$ with $l(\lambda)\ge k$ can not be expressed as a linear combination of some $g_{\alpha,\beta}$ with $l(\alpha)<k$.
Similarly, in the  proof of Theorem \ref{basis-theorem}, it suffices to analyze the first summand in the above identity and focus on a specific term $\tilde F_{(\bar i_1,\ldots, \bar i_k, i_{k+1}, \ldots, i_a)}$ from Corollary \ref{dotpacket-action}.
\end{remark}

\subsection{A basis theorem for $\QAW$}
We are ready to give a basis theorem for any morphism space of $\QAW$ over $\kk$.
For any $g_{\lambda,\mu}$ in \eqref{dots_data_simplify}, let $\deg^{\circ}g_{\lambda,\mu}$ be the number of white dots on $ g_{\lambda,\mu}$.
\begin{theorem}
\label{basis-theorem}
(A basis theorem for $\Hom_{\QAW}(\mu,\lambda)$)
    For any $\lambda,\mu\in \Lambda_{\text{st}}(m)$, $Hom_\QAW(\mu,\lambda)$ has a $\kk$-basis
    given by $\PMat_{\lambda,\mu}$.
\end{theorem}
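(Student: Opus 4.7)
The spanning part has already been established in Proposition~\ref{prop:spanofaff}, so it suffices to prove the linear independence of $\PMat_{\lambda,\mu}$ in $\Hom_{\QAW}(\mu,\lambda)$. The plan is to use the functor $\mathcal F_{M^{\text{gen}}} : \QAW \to \mathfrak q_n\text{-smod}$ obtained from Proposition~\ref{functorofaff} by evaluation at the generic Verma module $M^{\text{gen}}$ of \eqref{defofgenericverma}, and to separate distinct elementary chicken foot diagrams by examining the leading terms of their images under this functor.

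Suppose a linear relation $\sum_{(A,P)} c_{A,P}\, D_{A,P} = 0$ holds, where $D_{A,P}$ denotes the elementary CFD of type $(A,P) \in \PMat_{\lambda,\mu}$. The first step is to separate diagrams of different shapes $A \in \Mat_{\lambda,\mu}$. I would do this by evaluating $\mathcal F_{M^{\text{gen}}}$ on a test vector of the form $u \otimes h \otimes v_{\mathbf{i}^{(1)}} \otimes \cdots \otimes v_{\mathbf{i}^{(\ell)}} \in M^{\text{gen}} \otimes S^\mu(V)$, with the sequences $\mathbf{i}^{(s)}$ chosen so that, under the merge--split pattern of the CFD, one can read off from the resulting tensor factor in $S^\lambda(V)$ exactly how many entries from each input strand were routed into each output strand. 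Ignoring the dot decorations (which commute with the web skeleton up to lower-order corrections by Lemmas~\ref{dotmovefreely1} and~\ref{dotmovefreely2}), this recovers the matrix $A$, mirroring the argument used for the basis theorem of $\W$ in \cite{BEEO} and of the affine web category in \cite{SW1}. Consequently the relation decouples into one relation per shape $A$.

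For a fixed shape $A$, one must show that the family $\{D_{A,P}\}$ indexed by decorations $P = ((\nu^{i,j},\eta^{i,j}))$ with $(\nu^{i,j},\eta^{i,j}) \in \SPar_{a_{i,j}} \times \Par_{a_{i,j}}$ is linearly independent. I would test against a vector in which the segment of $v$'s flowing into each $(i,j)$-leg is a generic sequence $\mathbf{i}^{(i,j)}$ with pairwise distinct Cartan variables $y_{(i^{(i,j)}_t)^*}$ across all legs. By Corollary~\ref{dotpacket-action} and Remark~\ref{remark_atmostk}, up to strictly lower-order terms the elementary dot packet $g_{\nu^{i,j},\eta^{i,j}}$ decorating the $(i,j)$-leg contributes to the coefficient of a particular tensor basis vector $v_{\mathbf{j}^{(i,j)}}$ (with exactly $k_{i,j} := \ell(\nu^{i,j})$ barred entries) the polynomial
\[ \Delta\bigl(y_{i^{(i,j)}_1},\ldots,y_{i^{(i,j)}_{k_{i,j}}}\bigr)\, g_{\nu^{i,j}}\, e_{\eta^{i,j}}\bigl(y_{i^{(i,j)}_1},\ldots,y_{i^{(i,j)}_{a_{i,j}}}\bigr), \]
where $g_{\nu^{i,j}}$ has the triangular form \eqref{lead-terms-glambda}. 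Since the test vector was chosen to use disjoint sets of Cartan variables on different legs, the resulting leading term of $\mathcal F_{M^{\text{gen}}}(D_{A,P})$ factors as a product over $(i,j)$ of such partially symmetric polynomials, and separating by the partitions $k_{i,j}$ of barred indices per leg reduces the relation to a separate statement on each leg.

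The crux of the argument, and the step I expect to be the main obstacle, is the linear independence, for each $a$, of the polynomials
\[ \bigl\{\, \Delta(y_1,\ldots,y_k)\, g_\nu(y_{k+1},\ldots,y_a)\, e_\eta(y_1,\ldots,y_a) \,\bigr\}_{(\nu,\eta) \in \SPar_a \times \Par_a} \]
in $\kk[y_1,\ldots,y_a]$. This is precisely the new phenomenon emphasized in the introduction: unlike the affine web of type $A$, where one only needs linear independence of elementary symmetric polynomials (a standard fact), here one must control a family that is antisymmetric in one block of variables, symmetric in another, and then multiplied by a fully symmetric polynomial. I would defer this linear independence to Appendix~\ref{subse:partiallysym}, which develops the requisite partial-symmetry techniques, and then combine the resulting per-leg nonvanishing multiplicatively to force all coefficients $c_{A,P}$ to vanish, completing the proof.
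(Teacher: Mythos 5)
Your proposal follows essentially the same route as the paper's proof: spanning from Proposition \ref{prop:spanofaff}; linear independence via $\mathcal F_{M^{\text{gen}}}$ applied to the test vector $w_\mu$ with pairwise distinct indices (so legs use disjoint Cartan variables); separation first by the shape $A$ and then by the number of barred indices per leg, so that the Vandermonde factors are common and the relation reduces, leg by leg, to the linear independence of the partially symmetric polynomials $g_\nu e_\eta$ established in Lemma \ref{indepedance of k wdots}, combined multiplicatively using that the polynomial ring is an integral domain. The only step you omit is that this functorial argument proves independence over $\C$ only, whereas the theorem is stated over an arbitrary $\kk$ with $2$ invertible, so one must finish with the standard base-change argument from $\Z$ to $\kk$, as the paper does.
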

\begin{proof}
Thanks to Proposition \ref{prop:spanofaff}, it suffices to prove that $\PMat_{\lambda,\mu}$ is linearly independent.  
    Suppose $\lambda,\mu\in \Lambda_{\text{st}}(m)$ with $l(\lambda)=h$ and $l(\mu)=t$.
Recall for any reduced chicken foot diagram of shape $A \in\Mat_{\lambda,\mu}$, each $\mu_j$ splits first at the bottom to $a_{1j}, \ldots, a_{hj}$ from left to right, then the thin strands $a_{i1},\ldots, a_{it}$ merge back to  $\lambda_i$ at the top.
For any elementary chicken foot diagram of shape $A$, there is an elementary dot packet,  say $g_{\nu_{i,j},\eta_{ij}}, 
\nu_{i,j}\in \SPar_{a_{ij}},\eta_{i,j}\in\Par_{a_{i,j}}$ on each $(i,j)$-leg at the bottom. 
We first assume that $\kk=\C$. Suppose that 
\begin{equation}
    L:=\sum_{A_\wp\in S}c_{A_\wp} A_\wp=0,
\end{equation}
for some finite subset $S\subset \PMat_{\lambda,\mu}$ with each $c_{A_\wp}\in \C^*$. 
Let $\hbar =\max\{\deg (A_\wp) \mid A_\wp\in S\}$ and $S_\hbar=\{A_\wp\in S\mid \deg (A_\wp)=\hbar \}$. 

It remains to prove $S_\hbar=\emptyset$. To prove this, we use the functor $\mathcal F_{M^{\text{gen}}}$ and compare the highest degree terms of $A_\wp\in S_\hbar$ on a certain element of $M^{\text{gen}}\otimes S^\mu V$, where $V$ is the natural representation of $\mathfrak{q}_n$.
We choose $n\gg 0$ (e.g., $n>d$) and $u:=1\otimes 1\otimes 1\otimes w_\mu$, where 
$$w_\mu:=m_1\otimes m_2\otimes\ldots\otimes m_t,$$
and  $m_j=v_{\sum_{i=0}^{j-1}\mu_i+1}\otimes v_{\sum_{i=0}^{j-1}\mu_i+2}\otimes\ldots\otimes v_{\sum_{i=0}^{j}\mu_i}\in S^{\mu_j}V$.
That is, the indices of the vector in   $S ^{\mu_j}V $ is  $I_{\mu_j}=\{\sum_{1\le s\le j-1}\mu_s+1, \ldots, \sum_{1\le s\le j}\mu_s \}$, $1\le j\le l(\mu)=t$. Note that by \eqref{dots-change +-} and corollary \ref{actionofelempoly}, the vector  indices  in $S^{\mu_j}V$ are included in $I_{\mu_j}\cup \bar I_{\mu_j}$ under the elementary dot packet action, where $\bar I_{\mu_j}:=\{\bar j|j\in I_{\mu_j}\}$ and the dot packet $g_{\nu_{i,j},\eta_{i,j}}$ action  changes at most $\deg^\circ g_{\nu_{i,j},\eta_{i,j}}=l(\nu_{i,j})$ indices of $v_i$ from $i$ to $\bar{i}$.
Since $(I_{\mu_j}\cup \bar I_{\mu_j})\cap (I_{\mu_i}\cup \bar I_{\mu_i})=\emptyset$ if $i\ne j$,  this implies that the part of the highest degree component of $F_{M^{\text{gen}}}(A_\wp)(u)$ in $S^{\mu}V $ is different from  $F_{M^{\text{gen}}}(A_\wp')(u)$ if $A\neq A'$, where 
    $A_\wp=(A,P),  A_\wp'=(A',P')\in S_\hbar$.
Thus, By remark \ref{remark_atmostk} we may assume that all $A_\wp$ in $S_\hbar$ have the same shape $A\in \Mat_{\lambda,\mu}$ and the same $\deg^\circ g_{\nu_{i,j}}$ for each $(i,j)$-legs.

Next, we choose a unique element $z_\lambda$ appearing in the action of chicken foot diagram $A$ on $w_\mu$, where $z_\lambda=z_{\lambda_1}\otimes z_{\lambda_2}\otimes\ldots\otimes z_{\lambda_h}$, $z_{\lambda_i}=v_{d'_1}\otimes v_{d'_2}\otimes\ldots\otimes v_{d'_{\lambda_i}}$ with $d'_x\in\{d_x,\bar{d_x}\}$, $(d_1,\ldots,d_{\lambda_i})=(\mathbf i_1, \ldots, \mathbf i_t )$, where 
$$
\mathbf i_j=(p_{i,j}+1,\ldots,p_{i,j}+a_{i,j}), p_{i,j}=\sum_{1\le m\le j-1}\mu_m+\sum_{1\le s\le i-1}a_{sj}
$$
and $d'_{x}=\bar {d_x}$ if $x=p_{i,j}+q,\text{ where } q\in \{1,\ldots,\deg^\circ g_{\nu_{i,j},\eta_{i,j}}\},j\in\{1,\ldots,t\}$, $d'_{x}=d_x$ if $x=p_{i,j}+q,\text{ where } q\in \{\deg^\circ g_{\nu_{i,j},\eta_{i,j}}+1,\ldots,a_{i,j}\},j\in\{1,\ldots,t\}$.

The selection of $z_\lambda$ ensures that the action of dot packet in each leg obtains the similar term corresponding to corollary \ref{dotpacket-action},
then the unique term of degree $\hbar$ component of $\mathcal F_{M^{\text{gen}}}(L)(u)$ with part  $z_\lambda$  must be 
$$
\sum_{B\in S_\hbar}c_{A_\wp}d_{\hbar} \prod_{i,j} \Delta_{i,j} g_{\nu_{i,j}}e_{\eta_{ij}} \big(y_{p_{i,j}+1}, \ldots,y_{p_{i,j}+a_{i,j}} \big) z_\lambda,   
$$ where $d_{\hbar}\in\{\pm1,\pm\sqrt{-1}\}$,$\Delta_{i,j}=\Delta(y_{p_{i,j}+1},\ldots,y_{p_{i,j}+\deg^\circ g_{\nu_{i,j},\eta_{i,j}}})$, $g_{\nu_{i,j}}$ is the polynomial of $\{y_p|p\in\{p_{i,j}+\deg^\circ g_{\nu_{i,j},\eta_{i,j}}+1,\ldots,p_{i,j}+a_{i,j}\}$, as defined in \ref{algorithm} and $\Delta_{i,j}g_{\nu_{i,j}}e_{\eta_{i,j}}$ is obtained by the action of the dot packet $g_{\nu_{i,j},\eta_{i,j}}$ on the $(i,j)$-leg.
This can not be zero if $S_\hbar\neq \emptyset$ by lemma \ref{indepedance of k wdots} and the fact that $\mathbb{Z}[y_1,\ldots,y_{|\mu|}]$ is integral domain. Hence we conclude that $S_\hbar =\emptyset$.

This proves that $\PMat_{\lambda,\mu}$ is linearly independent over $\mathbb C$ and, consequently, over $\Z$. 
Through the standard base change argument from  $\Z$ to $\kk$, the linear independence of  $\PMat_{\lambda,\mu}$ extends to any  $\kk$. This completes the proof.
\end{proof}
\begin{example}
For $\lambda=(2,3), \mu=(3,2)$, we have the following elementary chicken foot diagram:
$$
A=
\begin{tikzpicture}[baseline = 19pt,scale=0.3,color=\clr,inner sep=0pt, minimum width=11pt]
\draw[-,line width=.75mm] (-4,0) to (-4,-.3);
\draw[-,line width=.75mm] (4,0) to (4,-.3);
\draw[-,line width=.75mm] (-3,5) to (-3,5.3);
\draw[-,line width=.75mm] (3,5) to (3,5.3);
\draw[-,line width=.25mm] (-4,0) to (-3,5);
\draw[-,line width=.25mm] (-4,0) to (3,5);
\draw[-,line width=.25mm] (4,0) to (-3,5);
\draw[-,line width=.25mm] (4,0) to (3,5);
\draw (-2,1.43)\bdot;
\node at (-1.6,1){$\scriptstyle f$};
\draw (3.7,1.43) \bdot;
\node at (4.3,1.2){$\scriptstyle h$};
\node at (-4,4) {$\scriptstyle 1$};
\node at (-2.4,4) {$\scriptstyle 1$};
\node at (0.6,4) {$\scriptstyle 2$};
\node at (4,4) {$\scriptstyle 1$};
\end{tikzpicture},
$$
where $f=g_{\nu_{2,1},\eta_{2,1}},h=g_{\nu_{2,2},\eta_{2,2}}$ and $\deg^\circ f=2,\deg^\circ h=1$.
Then  
$w_\mu=m_1\otimes m_2=(v_1 v_2 v_3)\otimes (v_4 v_5)$ and
\begin{align*}
\mathcal F_{M^{\text{gen}}}(A)(u)&\equiv d_1 1\otimes1\otimes \Delta_{2,1}g_{\nu_{2,1}}e_{\eta_{2,1}}(y_{2},y_{3})\Delta_{2,2}g_{\nu_{2,2}}e_{\eta_{2,2}}(y_{5}) \otimes v_{1}v_{4}\otimes v_{\bar{2}}v_{\bar{3}}v_{\bar{5}}\\
&+d_2 1 \otimes1\otimes \Delta_{2,1}g_{\nu_{2,1}}e_{\eta_{2,1}}(y_{1},y_{3})\Delta_{2,2}g_{\nu_{2,2}}e_{\eta_{2,2}}(y_{5})  \otimes v_{2}v_{4}\otimes v_{\bar{1}}v_{\bar 3}v_{\bar{5}}\\
&+d_3 1 \otimes1\otimes \Delta_{2,1}g_{\nu_{2,1}}e_{\eta_{2,1}}(y_{1},y_{2})\Delta_{2,2}g_{\nu_{2,2}}e_{\eta_{2,2}}(y_{5})  \otimes v_{3}v_{4}\otimes v_{\bar{1}}v_{\bar 2}v_{\bar{5}}\\
&+d_4 1 \otimes1\otimes \Delta_{2,1}g_{\nu_{2,1}}e_{\eta_{2,1}}(y_{2},y_{3})\Delta_{2,2}g_{\nu_{2,2}}e_{\eta_{2,2}}(y_{4})  \otimes v_{1}v_{5}\otimes v_{\bar{2}}v_{\bar 3}v_{\bar{4}}\\
&+d_5  1 \otimes1\otimes \Delta_{2,1}g_{\nu_{2,1}}e_{\eta_{2,1}}(y_{1},y_{3})\Delta_{2,2}g_{\nu_{2,2}}e_{\eta_{2,2}}(y_{4})  \otimes v_{2}v_{5}\otimes v_{\bar{1}}v_{\bar 3}v_{\bar{4}}\\
&+d_6 1 \otimes1\otimes \Delta_{2,1}g_{\nu_{2,1}}e_{\eta_{2,1}}(y_{1},y_{2})\Delta_{2,2}g_{\nu_{2,2}}e_{\eta_{2,2}}(y_{4})  \otimes v_{3}v_{5}\otimes v_{\bar{1}}v_{\bar 2}v_{\bar{4}}\\
&+\sum_{\underset{\sharp\{s|j_s=\bar{i}, i\in\{1,\ldots,5\}\}<3}{\{j_1,\ldots,j_5\}=\{1,\ldots,5\}}}d_{j_1,\ldots,j_5}1\otimes 1\otimes  G_{j_1,\ldots,j_5}\otimes v_{j_1}v_{j_2}\otimes v_{j_3} v_{j_4}v_{j_5},
\end{align*}
where $d_i,d_{j_1,\ldots,j_5}\in\{\pm1,\pm\sqrt{-1}\},G_{j_1,\ldots,j_5}\in\mathbb{Z}[y_1,\ldots,y_5]$. The variables of $\Delta_{i,j}g_{\nu_{i,j}}$ in $\Delta_{i,j}g_{\nu_{i,j}} e_{\eta_{i,j}}$ are obvious (cf. Proposition \ref{algorithm}), so we omit it.
In this case $z_\lambda=v_1 v_4\otimes v_{\bar 2}v_{\bar 3} v_{\bar 5}$.
\end{example}
\begin{theorem}
\label{thm:basisofqweb}
    (A basis theorem for $\Hom_{\qW}(\mu,\lambda)$)
    For any $\lambda,\mu\in \Lambda_{\text{st}}(m)$, $Hom_\qW(\mu,\lambda)$ has a $\kk$-basis
    given by 
\begin{equation}
\label{equ:parmatr1}
\PMat^{1}_{\lambda,\mu}:=\{(A,P)\in \PMat_{\lambda,\mu}|P=((k),\emptyset),k=0,1\},
\end{equation}
where the condition $k=0,1$  means that each leg of the diagram can have at most one white dot and no black dots. Moreover, $\qW$ is a subcategory of $\QAW$.
\end{theorem}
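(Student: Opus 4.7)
\medskip

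\noindent\emph{Proof proposal.} The plan is to deduce the two remaining statements, (2) and (3), from the already-established basis theorem for $\QAW$ (Theorem \ref{basis-theorem}). The starting observation is that every defining relation of $\qW$ appears among the defining relations of $\QAW$, so there is a canonical strict $\kk$-linear monoidal functor
\[
\iota:\qW\longrightarrow\QAW
\]
that is the identity on objects and sends each generator of $\qW$ to the same-named generator of $\QAW$. Under $\iota$, the diagrams in $\PMat^1_{\lambda,\mu}$ are precisely those elements of $\PMat_{\lambda,\mu}$ whose bi-partition data is either empty or a single white dot on each leg (no black dots, no higher white dot packets).

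For the spanning part of (2), I would imitate the proof of Proposition \ref{prop:spanofaff}, but restricted to the three relevant generator types in $\qW$: type II (a white dot), type $\rot{Y}$ (a merge), and type Y (a split). The induction on degree now lives entirely in degree $0$, and the four ingredients invoked in the $\QAW$ proof that can introduce black dots (Lemma \ref{dotmovefreely1}, Lemma \ref{dotmovefreely2} and the relations \eqref{dotmovecrossing}--\eqref{wdotbdot}) are never triggered. The only step that genuinely differs from the $\W$-case is that passing a white dot through a merge or split via \eqref{wdot-move-split and merge} may cause two white dots to land on the same leg, in which case relation \eqref{doublewdot} collapses them into a scalar multiple of the dot-free leg. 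Combining adjacent legs after a new merge or split and resolving double crossings proceeds via \eqref{webassoc}, \eqref{swallows}, \eqref{symmetric+braid} and \eqref{wdotsmovecrossing} exactly as before. The output is a linear combination of diagrams in $\PMat^1_{\lambda,\mu}$.

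For linear independence, I would argue directly through $\iota$. Since $\iota(\PMat^1_{\lambda,\mu})\subseteq\PMat_{\lambda,\mu}$ and distinct elements of $\PMat^1_{\lambda,\mu}$ map to distinct elements of $\PMat_{\lambda,\mu}$, any nontrivial $\kk$-linear relation among elements of $\PMat^1_{\lambda,\mu}$ inside $\Hom_\qW(\mu,\lambda)$ would transport to a nontrivial relation among distinct members of a $\kk$-basis of $\Hom_\QAW(\mu,\lambda)$ (Theorem \ref{basis-theorem}), a contradiction. Combined with the spanning argument above, this yields (2) and simultaneously shows that $\iota$ is injective on the basis of each Hom-space, hence faithful, which is exactly (3).

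The main potential obstacle is verifying that the $\qW$-version of the reduction in Proposition \ref{prop:spanofaff} never silently leaves the span of $\PMat^1_{\lambda,\mu}$; concretely, one must track that only the relations \eqref{webassoc}--\eqref{equ:r=0}, \eqref{wdotsmovecrossing}--\eqref{equ:onewdot}, and \eqref{doublewdot} are ever used and that the \emph{at most one white dot per leg} normal form is preserved after every reduction. This is a bookkeeping check rather than a substantive difficulty, but it is where care is required, since the analogous statement in $\QAW$ relies on the richer machinery of the Appendix.
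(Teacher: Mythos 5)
Your proposal is correct and follows essentially the same route as the paper: the spanning part is read off from the degree-zero portion of the proof of Proposition \ref{prop:spanofaff}, and linear independence plus the subcategory claim are both obtained by pushing $\PMat^1_{\lambda,\mu}$ forward along the canonical functor $\qW\to\QAW$ and invoking Theorem \ref{basis-theorem}. The bookkeeping concern you flag at the end is exactly what the paper's appeal to "the degree zero part" of that proof resolves, so no further work is needed.
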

\begin{proof}
From the degree zero part of the  proof of proposition \ref{prop:spanofaff}, we observe  that  $\PMat^{1}_{\lambda,\mu}$ spans $\Hom_{\qW}(\lambda,\mu)$. Consider the obvious functor  $\mathcal D$  from $\qW$ to $\QAW$ which maps the generating objects and morphisms in $\qW$ to those of the same name in $\QAW$. 
The linear independence of $\PMat^{1}_{\lambda,\mu}$ then follows from the linear independence of the image under $\mathcal D$, as established in Theorem \ref{basis-theorem}.
This proves the first statement. 
The second then follows from the first,  since 
 $\mathcal D$ sends the basis to a linearly independent subset.
\end{proof}

\begin{remark}
    When $\kk=\C$, the basis $\PMat^{1}_{\lambda,\mu}$ is the same as $\{\theta_{T}|T\in\mathscr{T}_*(\lambda,\mu)\},\forall \lambda,\mu\in\Lambda'(r)$ (up to non-zero scalars) defined in \cite[Theorem 5.4]{Br19}.
\end{remark}

\subsection{Affine Sergeev superalgebra.}
Recall (e.g. \cite[Chap. 14]{Kle05}) the Affine Sergeev superalgebra $A_n$ is the superalgebra generated by even generators $s_1,\ldots,s_{n-1},x_1,\ldots,x_n$, and odd generators $c_1,\ldots,c_n$, subject to the relations for all admissible $i,j,l$:
\begin{align}
    &s^2_i=1,\quad s_is_{i+1}s_i=s_{i+1}s_i s_{i+1}, \quad s_j s_i=s_i s_j \quad (|i-j|>1), \label{ss} \\
    &x_is_i=s_ix_{i+1}+1-c_ic_{i+1}, \quad
    s_ix_i=x_{i+1}s_i+1+c_ic_{i+1} \label{sx},\\
    &x_ix_l=x_lx_i,\quad s_ix_j=x_js_i,\quad (j\ne i,i+1)\label{xx},\\
    &c^2_i=1,\quad c_ix_i=-x_ic_i, \quad c_ic_j=-c_jc_i, \quad 
    c_ix_j=x_jc_i\quad (i\ne j), \label{cx}\\
    &s_ic_j=c_js_i,\quad s_ic_i=c_{i+1}s_i,\quad s_ic_{i+1}=c_is_i, (j\ne i,i+1) \label{sc}.
\end{align}
\begin{corollary}
    We have a superalgebra isomorphism $\End_{\QAW}(1^n)\cong A_n$.
\end{corollary}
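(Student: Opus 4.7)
The plan is to construct an explicit superalgebra homomorphism
$\phi: A_n \to \End_{\QAW}(1^n)$
sending $s_i$ to the thickness-$(1,1)$ crossing on strands $i,i+1$, $x_i$ to a black dot on the $i$-th strand, and $c_i$ to a white dot on the $i$-th strand, and then to verify bijectivity by matching bases via Theorem~\ref{basis-theorem}.

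Verifying each defining relation of $A_n$ reduces to already-established identities in $\QAW$. The braid and distant commutation relations \eqref{ss} follow from \eqref{symmetric+braid} together with \eqref{super-interchange} applied to the even crossing. The Clifford-type relations \eqref{cx} break up as follows: $c_i^2=1$ is the $a=1$ instance of \eqref{doublewdot}; $c_ic_j=-c_jc_i$ for $i\ne j$ is \eqref{super-interchange} applied to two odd morphisms on distinct tensor factors; $c_ix_i=-x_ic_i$ is the $a=1$ instance of \eqref{wdotbdot}; and $c_ix_j=x_jc_i$ for $i\ne j$ is again \eqref{super-interchange}. The polynomial relations \eqref{xx} are immediate. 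The crossing--Clifford relations \eqref{sc} come from the $a=b=1$ cases of \eqref{wdotsmovecrossing} (adjacent strands) and from \eqref{super-interchange} (distant strands). The key Hecke-type relations \eqref{sx} are exactly the $a=b=1$ case of \eqref{dotmovecrossing}, recorded explicitly in \eqref{dotmovecrossingC}: reading the identity bottom-to-top, the first term on the right gives the $s_ix_{i+1}$ (resp.\ $x_{i+1}s_i$) contribution, the second gives the constant $1$, and the third, which carries two white dots, gives $-c_ic_{i+1}$ (resp.\ $+c_ic_{i+1}$), matching \eqref{sx} term by term. Hence $\phi$ is a well-defined $\kk$-superalgebra homomorphism.

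For bijectivity I apply Theorem~\ref{basis-theorem} with $\lambda = \mu = (1^n)$. Since every entry of any shape $A \in \Mat_{(1^n),(1^n)}$ is at most $1$, $A$ must be the permutation matrix of some $w \in S_n$, and the underlying reduced CFD is the braid diagram of $w$. On each thickness-$1$ leg the data $(\nu_i,\eta_i)\in\SPar_1\times\Par_1$ amounts to $\nu_i\in\{\emptyset,(1)\}$ and $\eta_i=(1^{\alpha_i})$ for a unique $\alpha_i \ge 0$; unfolding \eqref{bw-dot} under the convention that a leg of thickness $0$ is trivial (equivalently, comparing the action on $M^{\text{gen}}\otimes V$ via Corollary~\ref{action-one-ball}) shows that $\omega^\circ_{1,0}$ coincides with a single white dot on the strand. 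Therefore the elementary dot packet on strand $i$ equals, up to signs from \eqref{wdotbdot}, the image under $\phi$ of $c_i^{\epsilon_i}x_i^{\alpha_i}$ with $\epsilon_i = l(\nu_i) \in\{0,1\}$. Consequently $\PMat_{(1^n),(1^n)}$ coincides, up to signs and reorderings already controlled by the relations verified above, with the $\phi$-image of the standard PBW basis $\{x^\alpha c^\epsilon w : \alpha\in\N^n,\ \epsilon\in\{0,1\}^n,\ w\in S_n\}$ of $A_n$. This simultaneously establishes injectivity and surjectivity of $\phi$.

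The main obstacle is the bookkeeping in this last step: pinning down $\omega^\circ_{1,0}$ as an honest white dot (a convention check rather than a computation), arranging a diagrammatic normal form for $\PMat_{(1^n),(1^n)}$ that matches the chosen PBW ordering of $A_n$, and tracking the signs introduced when commuting dots through crossings via \eqref{dotmovecrossing} or through each other via \eqref{wdotbdot} and \eqref{super-interchange}. Once these choices are fixed, the corollary is an immediate consequence of Theorem~\ref{basis-theorem}.
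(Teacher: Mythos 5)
Your proposal is correct and follows essentially the same route as the paper: the same assignment $s_i\mapsto$ crossing, $x_i\mapsto$ black dot, $c_i\mapsto$ white dot, the same citations for verifying the defining relations of $A_n$ (with \eqref{symmetric+braid}, \eqref{dotmovecrossing}/\eqref{dotmovecrossingC}, \eqref{doublewdot}, \eqref{wdotbdot}, \eqref{super-interchange}, \eqref{wdotsmovecrossing}), and the same conclusion by matching the PBW basis of $A_n$ with the basis $\PMat_{(1^n),(1^n)}$ from Theorem~\ref{basis-theorem}. The only difference is that you spell out the identification of the thickness-$1$ dot packets $g_{\nu,\eta}$ with $c_i^{\epsilon_i}x_i^{\alpha_i}$ (including the convention check that $\omega^\circ_{1,0}$ is a single white dot), which the paper leaves implicit.
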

\begin{proof}
    We define a superalgebra homomorphism $\phi:A_n \rightarrow \End_{\QAW}(1^n)$ such that 
 \[ x_j\mapsto
\begin{tikzpicture}[baseline = 3pt, scale=0.5, color=\clr]
   \draw[-,line width=1pt] (-1,-.2) to (-1,1.2);
   \node at (-1,-.5) {$\scriptstyle 1$};
   \node at (-.4,.4) {$\ldots$};
   \draw[-,line width=1pt] (.5,-.2) to (.5,1.2);
   \node at (.5,-.5) {$\scriptstyle 1$};
   \draw[-,line width=1pt] (1,-.2) to (1,1.2);
   \draw(1,0.5) \bdot;
   \node at (1,-.5) {$\scriptstyle 1$};
   \draw[-,line width=1pt] (1.5,-.2) to (1.5,1.2);
   \node at (1.5,-.5) {$\scriptstyle 1$};
   \node at (2.1,.4) {$\ldots$};
   \draw[-,line width=1pt] (3,-.2) to (3,1.2);
   \node at (3,-.5) {$\scriptstyle 1$};
   \end{tikzpicture}, \quad 
    c_j\mapsto
\begin{tikzpicture}[baseline = 3pt, scale=0.5, color=\clr]
   \draw[-,line width=1pt] (-1,-.2) to (-1,1.2);
   \node at (-1,-.5) {$\scriptstyle 1$};
   \node at (-.4,.4) {$\ldots$};
   \draw[-,line width=1pt] (.5,-.2) to (.5,1.2);
   \node at (.5,-.5) {$\scriptstyle 1$};
   \draw[-,line width=1pt] (1,-.2) to (1,1.2);
   \draw(1,0.5) \wdot;
   \node at (1,-.5) {$\scriptstyle 1$};
   \draw[-,line width=1pt] (1.5,-.2) to (1.5,1.2);
   \node at (1.5,-.5) {$\scriptstyle 1$};
   \node at (2.1,.4) {$\ldots$};
   \draw[-,line width=1pt] (3,-.2) to (3,1.2);
   \node at (3,-.5) {$\scriptstyle 1$};
   \end{tikzpicture},\quad  
s_i \mapsto 
\begin{tikzpicture}[baseline = 3pt, scale=0.5, color=\clr]
   \draw[-,line width=1pt] (-1,-.2) to (-1,1.2);
   \node at (-1,-.5) {$\scriptstyle 1$};
   \node at (-.4,.4) {$\ldots$};
   \draw[-,line width=1pt] (.5,-.2) to (.5,1.2);
   \node at (.5,-.5) {$\scriptstyle 1$};
   \draw[-,line width=1pt] (1,-.2) to (2,1.2); 
   \node at (1,-.5) {$\scriptstyle 1$};
   \draw[-,line width=1pt] (2,-.2) to (1,1.2);
   \node at (2,-.5) {$\scriptstyle 1$};
   \draw[-,line width=1pt] (2.5,-.2) to (2.5,1.2);
   \node at (2.5,-.5) {$\scriptstyle 1$};
   \node at (3.2,.4) {$\ldots$};
   \draw[-,line width=1pt] (4,-.2) to (4,1.2);
   \node at (4,-.5) {$\scriptstyle 1$};
\end{tikzpicture}, \]
where the black and white dot are on the $j$th strand, and the crossing is between $i$th and $(i+1)$th strands. We claim that the homomorphism $\phi$ is well-defined. Indeed, the relation \eqref{ss} holds by \eqref{symmetric+braid}.
The relations in \eqref{sx} hold by \eqref{dotmovecrossing}, and 
the relations \eqref{xx}--\eqref{cx} hold by \eqref{doublewdot}, \eqref{wdotbdot} and the super-interchange law \eqref{super-interchange}. Additionally, the relation \eqref{sc} holds by \eqref{wdotsmovecrossing}.
Then the result follows because $\phi$ maps the basis elements in
$\{\omega c^{a_1}_1\cdots c^{a_n}_n  x^{b_1}_1\cdots x^{b_n}_n |a_i\in\{0,1\},b_i\in\mathbb{Z}_{\ge 0},\omega\in\mathfrak{S}_n\}$(cf. \cite[Theorem 14.2.2]{Kle05}) to the corresponding basis elements (up to signs) of $\End_{\QAW}(1^n)$ as described in Theorem \ref{basis-theorem}.
\end{proof}
\section{Appendix}
This section includes additional $\equiv$-relations in $\QAW$ and a useful result on the linear independence of certain partially symmetric polynomials. These results are used to prove both the spanning (see Proposition \ref{prop:spanofaff}) and linear independence aspects (see Theorem \ref{basis-theorem}) of the basis theorem for $\QAW$.
\subsection{More $\equiv$-relations}
Let $D_a$ (resp. $E_a$) be the submodule of $\End_{\QAW}(a)$ generated by the elements $\wlambdadota$ (resp. $
\begin{tikzpicture}[baseline = -1mm,scale=1,color=\clr]
   \draw[-,line width=1.5pt] (0.08,-.5) to (0.08,.4);
   \node at (.08,-.6) {$\scriptstyle a$};
   \draw(0.08,-0.2) \bdot;
   \draw(.45,0.2)node {$\scriptstyle \omega^\circ_{\bar\lambda}$};
   \draw(0.08,0.2) \bdot;
   \draw(.45,-0.2)node {$\scriptstyle \omega_{\mu}$};
\end{tikzpicture}$
), for any partition $\lambda, \mu\in \Par_a$. Then  $D_a$ is a  subalgebras of $\End_{\QAW}(a)$ by \eqref{extrarelation}.

 For any submodule $S$ of $\End_{\QAW}(a)$, we write $A\in_{\equiv} S $ if $ A\in S$ modulo lower degree terms.
 \begin{lemma}
\label{lambdainD}
		For any partitions  $\lambda\in \text {Par}_a$,  
\begin{equation}
\label{lambda}
\begin{tikzpicture}[baseline = -.5mm, scale=1., color=\clr]
                   \draw[-, line width=1.5pt] (0,-.5) to (0,-.2);
                   \draw[-, line width=1pt] (0,-.2) to[out=180,in=180] (0,.6);
                   \draw[-, line width=1pt] (0,-.2) to[out=0,in=0] (0,.6);
                   \draw[-, line width=1.5pt] (0,.6) to (0,.8);
                   \draw (-0.25,.2) \bdot;
                   \draw(-.6,.2)node {$\scriptstyle \omega_\lambda$};
                   \draw(-.3,-.1)node {$\scriptstyle a$};
                   \draw(.3,-.1)node {$\scriptstyle b$};
				\node at (1.5,.2){$\in_\equiv D_{a+b} $};
\end{tikzpicture}.
\end{equation}
\end{lemma}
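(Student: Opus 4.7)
The approach is to prove a more general statement: for all $\mu\in \Par_a$ and $\nu\in \Par_b$, the bubble
\[
X_{\mu,\nu} \;:=\; \merge \,\circ\, (\omega_\mu(a)\otimes \omega_\nu(b))\,\circ\, \splits \;\in\; \End_{\QAW}(a+b),
\]
where $\omega_\mu(a) := \omega_{a,\mu_1}\cdots \omega_{a,\mu_k}$ is the product of black-dot packets on the thickness-$a$ leg and $\omega_\nu(b)$ is defined analogously, satisfies $X_{\mu,\nu}\in_\equiv D_{a+b}$. The lemma is the special case $\nu = \emptyset$. The proof is by strong induction on the combined multiset $\pi := \mu\sqcup \nu$, sorted weakly decreasingly and compared in reverse-lexicographic order. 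This is a well-ordering on partitions whose parts are bounded by $\max(a,b)$. The base case $\pi = \emptyset$ gives $X_{\emptyset,\emptyset} = \binom{a+b}{a}\cdot\mathrm{id}_{a+b} \in D_{a+b}$ by \eqref{equ:r=0}.

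For the inductive step, let $p=\max(\pi)$. By the left-right symmetry of \eqref{dotmovemerge+high} (which can be used to isolate either $\omega_p$ on the $a$-leg or $\omega_p$ on the $b$-leg), one may assume $p$ is the first part of $\mu$, and write $\mu = (p, \tilde\mu)$. Isolating the $(c,d)=(p,0)$ summand in \eqref{dotmovemerge+high} yields
\[
\merge\,\circ\,(\omega_{a,p}\otimes 1_b)\ \equiv\ \omega_{a+b,p}\circ \merge\ -\ \sum_{\substack{c+d=p\\ d\geq 1}} \merge\,\circ\,(\omega_{a,c}\otimes \omega_{b,d}).
\]
Composing on the right with $(\omega_{\tilde\mu}(a)\otimes \omega_\nu(b))\circ \splits$, and using the commutativity of $\omega$'s on a single strand from \eqref{extrarelation}, one obtains the key recursion
\[
X_{\mu,\nu}\ \equiv\ \omega_{a+b,p}\cdot X_{\tilde\mu,\nu}\ -\ \sum_{\substack{c+d=p\\ d\geq 1}} X_{\tilde\mu\sqcup(c),\,\nu\sqcup(d)} \pmod{\text{lower degree}}.
\]

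Each multiset appearing on the right is strictly smaller than $\pi$ in reverse-lex order: $\tilde\mu\sqcup\nu = \pi\smallsetminus\{p\}$ has strictly smaller total, while $(\tilde\mu\sqcup(c))\sqcup(\nu\sqcup(d)) = (\pi\smallsetminus\{p\})\sqcup\{c,d\}$ has the same total $|\pi|$ but one fewer copy of the maximum $p$ (since $c,d\leq p-1$). The inductive hypothesis therefore gives $X_{\tilde\mu,\nu},\,X_{\tilde\mu\sqcup(c),\nu\sqcup(d)}\in_\equiv D_{a+b}$. Since $D_{a+b}$ is a subalgebra containing $\omega_{a+b,p}$, multiplying the first term by $\omega_{a+b,p}$ stays in $D_{a+b}$ modulo lower degree, so we conclude $X_{\mu,\nu}\in_\equiv D_{a+b}$. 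The main (and essentially only) subtle point is verifying that the ``error'' terms implicit in the $\equiv$ of \eqref{dotmovemerge+high} — in particular the $\omega^\circ_c\otimes \omega^\circ_d$ contributions from the $t\geq 1$ summands of \eqref{dotsmovemerge1} — are genuinely of lower degree than the final diagram: this is immediate because each such leg-level summand carries degree $c+d = p - t < p$, so after post-composition with $\omega_{\tilde\mu}(a)\otimes \omega_\nu(b)$ and $\splits$ it contributes total degree strictly less than $p + |\tilde\mu| + |\nu| = |\pi|$, as required.
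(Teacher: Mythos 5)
Your recursion identity is a correct rearrangement of \eqref{dotmovemerge+high}, but the induction built on it does not close. The term $(c,d)=(0,p)$ of the correction sum $\sum_{c+d=p,\ d\ge 1}$ is $X_{\tilde\mu,\,\nu\sqcup(p)}$, obtained by sliding the entire $\omega_p$ from the $a$-leg to the $b$-leg; its multiset is $(\pi\smallsetminus\{p\})\sqcup\{0,p\}=\pi$, so it is not smaller in \emph{any} order on multisets, and your parenthetical justification ``since $c,d\le p-1$'' is false for this term. The obstruction is not mere bookkeeping: since \eqref{dotmovemerge+high} treats the two legs symmetrically, isolating the extreme summands only ever yields
\begin{equation*}
X_{\tilde\mu\sqcup(p),\,\nu}+X_{\tilde\mu,\,\nu\sqcup(p)}\ \equiv\ \omega_{a+b,p}\,X_{\tilde\mu,\nu}\ -\ \sum_{c+d=p,\ c,d\ge 1}X_{\tilde\mu\sqcup(c),\,\nu\sqcup(d)},
\end{equation*}
i.e., the \emph{sum} of the two one-sided placements of $p$, never either one alone; applying the same move to $X_{\tilde\mu,\nu\sqcup(p)}$ just reproduces $X_{\mu,\nu}$, so the argument is circular. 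This is already visible in the symmetric-function shadow, where $X_{\mu,\nu}$ acts at leading order as $\sum_{|S|=a}e_\mu(y_S)e_\nu(y_{S^c})$: the identity $e_p(y)\cdot\sum_S e_{\tilde\mu}(y_S)e_\nu(y_{S^c})=\sum_{c+d=p}\sum_S e_c(y_S)e_d(y_{S^c})e_{\tilde\mu}(y_S)e_\nu(y_{S^c})$ never separates the $(c,d)=(p,0)$ and $(0,p)$ contributions.

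Some additional input is therefore needed to break the left/right symmetry before your outer induction can run. For example, for one part on each leg one can resolve $\omega_{a,p}$ and $\omega_{b,q}$ into full-dot balloons via \eqref{equ:r=0andr=a}, reassociate with \eqref{webassoc} and \eqref{swallows}, move the full dots through the crossings using \eqref{dotmovecrossing+high}, fuse them with \eqref{bdotmove}, and collapse the undotted bubble by \eqref{equ:r=0}, which gives $X_{(p),(q)}\equiv\binom{p+q}{p}\binom{a+b-p-q}{a-p}\,\omega_{a+b,p+q}$; an argument of this kind (or the actual induction of \cite[Lemma 2.13]{SW1}, to which the paper's one-line proof defers after checking that \eqref{dotmovemerge+high} agrees with the type-$A$ relation at top degree) must be supplied to handle the genuinely two-sided terms. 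The remaining ingredients of your write-up --- the base case, the degree count showing the $\omega^\circ_c\otimes\omega^\circ_d$ corrections are of lower degree, and the use of $D_{a+b}$ being a subalgebra containing $\omega_{a+b,p}$ --- are all fine.
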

	
\begin{proof}
The proof follows a similar approach to \cite[Lemma 2.13]{SW1}. While $\xdot$ moving across merges and splits generates lower-degree terms involving $\wxdot$, the relation \eqref{dotmovemerge+high} coincide with those in \cite[Lemma 2.12]{SW1} when considering only the highest-degree terms. Consequently, the same argument applies here.
\end{proof}

 For any partition $\lambda=(\lambda_1,\ldots, \lambda_k)$ with $\lambda_k\ne 0$, denote $l(\lambda)=k$         and $\lambda_{< q}=(\lambda_1,\ldots,\lambda_{q-1})$ for $1\le q\le k$. The definition of $\lambda_{>q}$ is similar. For simplicity  we  write
\[
.    
\end{align*}
    The first and the third summands $\in_\equiv E_{a+1}$  by inductive hypothesis on $t-1$ and on $\mu_t-1 $ respectively. The second and fourth summands  $\in_\equiv E_a+1$  by   \eqref{lambda} and  \eqref{lambdawdot}, respectively. This completes the proof of the first inclusion when $k=1$. Similarly, when $k=1$, we can prove the second inclusion by induction $\bar\lambda_1$ with \eqref{doublewdot}, \eqref{wdotbdot}, \eqref{dotmovemerge+high}, \eqref{ballmerge}, \eqref{lambda} and  \eqref{lambdawdot}. We omit this proof for simplicity.  

 \in_\equiv Z_a 
\end{align*}
		  by the inductive hypothesis on $t+1$. Thus $A_{r,0}\in_\equiv Z_a$. 
          
          Finally,  the argument for  $B_{r,1}$ is similar.  
\end{proof}

\begin{lemma}
\label{strictpartition}
    For  any $\lambda,\mu\in \Par_a$, we have $g_{\lambda,\mu}\equiv\sum_id_ig_{\alpha_i,\beta_i}$, for some  $\alpha_i\in \SPar_{a},\beta_i\in\Par_a$ and $ d_i\in\kk$. 
\end{lemma}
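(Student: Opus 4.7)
The plan is to argue by a double induction. The primary variable is the length $l(\lambda)$ of $\lambda$, and the secondary variable is the non-negative integer $V_{\max} - V(\bar\lambda)$, where $V(\bar\lambda) := \sum_i \bar\lambda_i^2$ and $V_{\max}$ denotes the maximum of $V(\bar{\lambda'})$ over partitions $\lambda' \in \Par_a$ of the same length $l(\lambda)$ and weight $|\bar{\lambda'}| = |\bar\lambda|$; this supremum is finite because each part of $\bar{\lambda'}$ lies in $\{0,1,\ldots,a-1\}$. The primary base case $l(\lambda) \le 1$ is immediate since $\lambda$ is then vacuously strict.

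For the inductive step with $l(\lambda) \ge 2$: if $\lambda$ is already strict, $g_{\lambda,\mu}$ itself is the desired expression. Otherwise $\bar\lambda$ has a repeated value $r$. First I would use the $\equiv$-commutativity of $\omega^\circ_{a,\cdot}$-dots (the second identity in \eqref{extrarelation}) to bring the two factors $\omega^\circ_{a,r}$ adjacent within the packet. Then I apply Lemma \ref{rrball} to this adjacent pair: $\omega^\circ_{a,r}\omega^\circ_{a,r}$ is $\equiv$ a $\kk$-linear combination of (a) products $\omega^\circ_{a,s}\omega^\circ_{a,t}$ with $s \ne t$, and (b) elements of $D_a$. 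Since we work modulo lower degree and $\omega^\circ_{a,r}\omega^\circ_{a,r}$ has degree $2r$, the surviving type-(a) contributions necessarily satisfy $s+t = 2r$. After substitution, I would normalize the resulting expressions back into the standard form $g_{\cdot,\cdot}$ by reordering $\omega^\circ$-dots and $\omega$-dots using \eqref{extrarelation} and \eqref{wdotballcommute}, with all $\equiv$-corrections absorbed.

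The substitution produces two kinds of packets. The type-(b) contribution yields $g_{\lambda',\mu'}$ with $l(\lambda') = l(\lambda) - 2$, handled by the primary inductive hypothesis. The type-(a) contribution yields $g_{\lambda'',\mu}$ with $l(\lambda'') = l(\lambda)$ and $|\bar{\lambda''}| = |\bar\lambda|$; writing $\{s,t\} = \{r-q, r+q\}$ with $q \ge 1$, the identity
\[
V(\bar{\lambda''}) - V(\bar\lambda) = (r-q)^2 + (r+q)^2 - 2r^2 = 2q^2 > 0
\]
shows $V_{\max} - V(\bar{\lambda''}) < V_{\max} - V(\bar\lambda)$, so the secondary inductive hypothesis applies.

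The main obstacle will be ensuring termination, since naive complexity measures such as the number of repeated pairs in $\bar\lambda$ can fluctuate under the substitution $(r,r) \to (s,t)$ if $s$ or $t$ happens to coincide with some other part of $\bar\lambda$. The variance-type measure $V(\bar\lambda)$ circumvents this: the constraint $s+t = 2r$ with $s \ne t$ forces $V$ to strictly increase by $2q^2 \ge 2$, while $V$ is bounded above on the finite set of length-$l(\lambda)$ partitions of $|\bar\lambda|$ with parts in $\{0,\ldots,a-1\}$. At the secondary base case $V = V_{\max}$, the range constraints $0 \le r-q$ and $r+q \le a-1$ conflict with the extremal shape of $\bar\lambda$ (in which parts concentrate near $\{0, a-1\}$), forcing the type-(a) contributions from Lemma \ref{rrball} to vanish; only type-(b) terms survive, and the primary induction closes the argument.
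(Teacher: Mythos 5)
Your proof is correct and essentially the same as the paper's: both reduce a repeated pair $\omega^\circ_{a,r}\omega^\circ_{a,r}$ via Lemma \ref{rrball} and run a double induction, outer on the number of white dots $l(\lambda)$ (which drops by two on the $D_a$-terms) and inner to control the spreading substitutions $(r,r)\mapsto(r+q,r-q)$. The only difference is the inner termination measure --- the paper uses reverse induction on the dominance order of $\lambda$ while you use the variance $\sum_i\bar\lambda_i^2$ --- but since each such substitution strictly increases both quantities (and both are bounded on the finite set of admissible $\bar\lambda$), the two bookkeeping devices are interchangeable here.
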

\begin{proof} 
Suppose $\lambda\in \Par(m)$ for some $m\in \N$.
    We prove by induction on  $\deg^\circ g_{\lambda,\mu}=k$ and by inverse induction on  the dominance order $\rhd$ for $\lambda$. The base case $k=1$ is trivial, since  $\lambda=\lambda_1$ is clearly a strict partition. Suppose $k\ge 2$ and that  $\lambda$ is maximal with respect to $\rhd$.
If $m\le a$, then $\lambda=(m-1,1),\:k=2$ and it is already strict.
    If $a+1\le m< 2a$, then $\lambda=(a,m-a)$, which is also strict.
   If $m\ge 2a$, then 
 $\lambda_1=\lambda_2=a$.
We have 
\begin{equation}
\label{doublea-1}
\begin{aligned}
\begin{tikzpicture}[baseline = 1.5mm, scale=1, color=\clr]
	\draw[-,line width=2pt] (0,-0.4) to[out=up, in=down] (0,.7);
	\draw(0,-0.05) \bdot; 
        \node at (-.5,0.35) {$\scriptstyle \omega^\circ_{a-1}$};
	\draw(0,0.35) \bdot;
	\node at (-.5,-0.05) {$\scriptstyle \omega^\circ_{a-1}$};
	\node at (0,-.6) {$\scriptstyle a$};
\end{tikzpicture}
	&\equiv~
\begin{tikzpicture}[baseline = 1.5mm, scale=1, color=\clr]
	\draw[-,line width=2pt] (0,-0.4) to(0,-0.3);
	\draw[-,thick] (0,-.3) to [out=180,in=-90] (-.2,.-.1);
	\draw[-,thick] (0,-.3) to [out=0,in=-90] (.2,-.1);
	\draw[-,thick] (-.2,-.1) to (-.2,.4);
	\draw[-,thick] (.2,-.1) to (0.2,.4);
	\draw[-,thick] (-.2,.4) to [out=90,in=-180] (0,.6);
	\draw[-,thick] (.2,.4) to [out=90,in=0] (0,.6);
	\draw[-,line width=2pt] (0,0.6) to(0,.7);
	\draw(-0.2,-.05) \bdot;
	\draw(-0.2,0.35) \bdot;
        \draw(0.2,-.05) \wdot;
        \draw(0.2,0.35) \bdot;
        \draw (-0.7,0.35) node {$\scriptstyle \omega^\circ_{a-2}$};
	\draw (-0.7,-.05) node {$\scriptstyle \omega_{a-1}$};
	\draw (0.3,-.3) node {$\scriptstyle 1$};
	\draw (0,-0.6) node {$\scriptstyle a$};
\end{tikzpicture}
	+
\begin{tikzpicture}[baseline = 1.5mm, scale=1, color=\clr]
	\draw[-,line width=2pt] (0,-0.4) to(0,-0.3);
	\draw[-,thick] (0,-.3) to [out=180,in=-90] (-.2,.-.1);
	\draw[-,thick] (0,-.3) to [out=0,in=-90] (.2,-.1);
	\draw[-,thick] (-.2,-.1) to (-.2,.4);
	\draw[-,thick] (.2,-.1) to (0.2,.4);
	\draw[-,thick] (-.2,.4) to [out=90,in=-180] (0,.6);
	\draw[-,thick] (.2,.4) to [out=90,in=0] (0,.6);
	\draw[-,line width=2pt] (0,0.6) to(0,.7);
	\draw(-0.2,-.05) \bdot;
	\draw(-0.2,0.35) \bdot;
        \draw(0.2,-.05) \wdot;
        \draw(0.2,0.35) \wdot;
        \draw (-0.7,0.35) node {$\scriptstyle \omega_{a-1}$};
	\draw (-0.7,-.05) node {$\scriptstyle \omega_{a-1}$};
	\draw (0.3,-.3) node {$\scriptstyle 1$};
	\draw (0,-0.6) node {$\scriptstyle a$};
\end{tikzpicture}
\equiv-
\begin{tikzpicture}[baseline = 1.5mm, scale=1, color=\clr]
	\draw[-,line width=2pt] (0,-0.4) to(0,-0.3);
	\draw[-,thick] (0,-.3) to [out=180,in=-90] (-.2,.-.1);
	\draw[-,thick] (0,-.3) to [out=0,in=-90] (.2,-.1);
	\draw[-,thick] (-.2,-.1) to (-.2,.4);
	\draw[-,thick] (.2,-.1) to (0.2,.4);
	\draw[-,thick] (-.2,.4) to [out=90,in=-180] (0,.6);
	\draw[-,thick] (.2,.4) to [out=90,in=0] (0,.6);
	\draw[-,line width=2pt] (0,0.6) to(0,.7);
	\draw(-0.2,-.05) \bdot;
	\draw(-0.2,0.35) \bdot;
        \draw(0.2,-.05) \bdot;
        \draw(0.2,0.35) \wdot;
        \draw (-0.7,0.35) node {$\scriptstyle \omega^\circ_{a-2}$};
	\draw (-0.7,-.05) node {$\scriptstyle \omega_{a-1}$};
	\draw (0.3,-.3) node {$\scriptstyle 1$};
	\draw (0,-0.6) node {$\scriptstyle a$};
\end{tikzpicture}
				+
\begin{tikzpicture}[baseline = 1.5mm, scale=1, color=\clr]
	\draw[-,line width=2pt] (0,-0.4) to(0,-0.3);
	\draw[-,thick] (0,-.3) to [out=180,in=-90] (-.2,.-.1);
	\draw[-,thick] (0,-.3) to [out=0,in=-90] (.2,-.1);
	\draw[-,thick] (-.2,-.1) to (-.2,.4);
	\draw[-,thick] (.2,-.1) to (0.2,.4);
	\draw[-,thick] (-.2,.4) to [out=90,in=-180] (0,.6);
	\draw[-,thick] (.2,.4) to [out=90,in=0] (0,.6);
	\draw[-,line width=2pt] (0,0.6) to(0,.7);
	\draw(-0.2,-.05) \bdot;
	\draw(-0.2,0.35) \bdot;
        \draw (-0.7,0.35) node {$\scriptstyle \omega_{a-1}$};
	\draw (-0.7,-.05) node {$\scriptstyle \omega_{a-1}$};
	\draw (0.3,-.3) node {$\scriptstyle 1$};
	\draw (0,-0.6) node {$\scriptstyle a$};
\end{tikzpicture}
\\
	&=
\begin{tikzpicture}[baseline = 1.5mm, scale=1, color=\clr]
	\draw[-,line width=2pt] (0,-0.4) to(0,0);
	\draw[-,thick] (0,0) to [out=180,in=-180] (0,.5);
	\draw[-,thick] (0,0) to [out=0,in=0] (0,.5);
	\draw[-,line width=2pt] (0,0.5) to(0,.7);
	\draw(0,-0.2) \bdot; 
	\draw(-0.15,0.25) \bdot; 
        \draw(0.15,0.25) \wdot; 
	\draw (-0.6,0.25) node {$\scriptstyle \omega^\circ_{a-2}$};
	\draw (0,-0.6) node {$\scriptstyle a$};
\end{tikzpicture}
		+
\begin{tikzpicture}[baseline = 1.5mm, scale=1, color=\clr]
	\draw[-,line width=2pt] (0,-0.4) to(0,-0.3);
	\draw[-,thick] (0,-.3) to [out=180,in=-90] (-.2,.-.1);
	\draw[-,thick] (0,-.3) to [out=0,in=-90] (.2,-.1);
	\draw[-,thick] (-.2,-.1) to (-.2,.4);
	\draw[-,thick] (.2,-.1) to (0.2,.4);
	\draw[-,thick] (-.2,.4) to [out=90,in=-180] (0,.6);
	\draw[-,thick] (.2,.4) to [out=90,in=0] (0,.6);
	\draw[-,line width=2pt] (0,0.6) to(0,.7);
	\draw(-0.2,-.05) \bdot;
	\draw(-0.2,0.35) \bdot;
        \draw (-0.7,0.35) node {$\scriptstyle \omega_{a-1}$};
	\draw (-0.7,-.05) node {$\scriptstyle \omega_{a-1}$};
	\draw (0.3,-.3) node {$\scriptstyle 1$};
	\draw (0,-0.6) node {$\scriptstyle a$};
\end{tikzpicture}
	\overset{\eqref{equ:twowhite=0}}=
\begin{tikzpicture}[baseline = 1.5mm, scale=1, color=\clr]
	\draw[-,line width=2pt] (0,-0.4) to(0,-0.3);
	\draw[-,thick] (0,-.3) to [out=180,in=-90] (-.2,.-.1);
	\draw[-,thick] (0,-.3) to [out=0,in=-90] (.2,-.1);
	\draw[-,thick] (-.2,-.1) to (-.2,.4);
	\draw[-,thick] (.2,-.1) to (0.2,.4);
	\draw[-,thick] (-.2,.4) to [out=90,in=-180] (0,.6);
	\draw[-,thick] (.2,.4) to [out=90,in=0] (0,.6);
	\draw[-,line width=2pt] (0,0.6) to(0,.7);
	\draw(-0.2,-.05) \bdot;
	\draw(-0.2,0.35) \bdot;
        \draw (-0.7,0.35) node {$\scriptstyle \omega_{a-1}$};
	\draw (-0.7,-.05) node {$\scriptstyle \omega_{a-1}$};
	\draw (0.3,-.3) node {$\scriptstyle 1$};
	\draw (0,-0.6) node {$\scriptstyle a$};
\end{tikzpicture}.
\end{aligned} 
\end{equation}
Then, by \eqref{wdotballcommute} and Lemma \ref{lambdainD}, we have $g_{\lambda,\mu}\equiv\sum_ic_ig_{\alpha_i,\beta_i}$, where  $\deg^\circ g_{\alpha_i,\beta_i}<\deg^\circ g_{\lambda ,\mu}$. Thus, the result  holds  by the induction hypothesis on $k$. 
This completes the proof 
for $\lambda$ maximal.

In general, suppose $\lambda$ is not maximal and not strict. 
 Then there is some $1\le j\le k-1$ such that  $\lambda_j=\lambda_{j+1}=r+1$.
  By Lemma \ref{rrball} we have	 
\begin{equation}
\label{reduceii}
\begin{tikzpicture}[baseline = 1.5mm, scale=1, color=\clr]
	\draw[-,line width=2pt] (0,-0.4) to[out=up, in=down] (0,.7);
	\draw(0,-.05) \bdot; 
        \node at (.3,0.35) {$\scriptstyle \omega^\circ_r$};
	\draw(0,0.35) \bdot;
	\node at (.3,-.05) {$\scriptstyle \omega^\circ_r$};
	\node at (0,-.6) {$\scriptstyle a$};
\end{tikzpicture}
		-\sum_{t=1}^{\min\{r,a-1-r\}}c_i
\begin{tikzpicture}[baseline = 1.5mm, scale=1, color=\clr]
	\draw[-,line width=2pt] (0,-0.4) to[out=up, in=down] (0,.7);
	\draw(0,-.05) \bdot; 
        \node at (.5,0.35) {$\scriptstyle \omega^\circ_{r+t}$};
	\draw(0,0.35) \bdot;
	\node at (.5,-.05) {$\scriptstyle \omega^\circ_{r-t}$};
	\node at (0,-.6) {$\scriptstyle a$};
\end{tikzpicture}
\in_\equiv E^{0,2r}
,
\text{ for } r=1,2,\cdots,a-2. 
\end{equation}
for some  $c_i\in\kk$, where 
        \[E^{0,d}_a
        =\text{span-}\{ g_{\gamma,\nu} \mid \deg^\circ g_{\gamma,\nu}= 0 \text{ and } \deg g_{\gamma,\nu}=d\}.\]
This together with  \eqref{extrarelation} and  \eqref{doublea-1}, implies that   $g_{\lambda^\circ,\mu}\equiv\sum_id_ig_{\theta_i,\gamma_i}+\sum_jd_j'g_{{\theta'_j},\gamma'_j}$, where $\theta_i,\gamma_i,\theta'_j,\gamma'_j\in\Par_a, d_i,d'_i\in\kk$ and $\theta_i \rhd\lambda$,  with $\deg^\circ g_{{\theta'}_j,\gamma'_j}<\deg^\circ g_{\lambda,\mu}$. Now, the result follows from the induction hypothesis on $k$ and the dominance order. 
\end{proof}

\subsection{A useful fact on partially   symmetric polynomials}\label{subse:partiallysym}
For any composition $\alpha$, let $[\alpha]$ denote the partition obtained by reordering the entries of $\alpha$.
By \cite[Chapte I, Example 16]{Mac}, 
we have 
\begin{equation}
\label{order-union}
\lambda\cup\mu\trianglerighteq\nu\cup\pi
\end{equation}
for any  $\lambda,\mu,\nu,\pi\in\Par$ such that $\lambda\trianglerighteq\nu$ and $\mu\trianglerighteq\pi$, where
 $\lambda\cup\mu:=[\lambda,\mu]$. 
 
For any $k\in \mathbb{Z}_{\ge 1}$ and $\mu\in \Par$, let 
\begin{equation}
\label{equ:defofIkmu}
 I_{k,\mu}=\{(\alpha_1,\cdots,\alpha_{l(\mu)})|0\le \alpha_j\le \min(k,\mu_j),1\le j\le l(\mu)\}.   
\end{equation}

\begin{lemma}
\label{lem:oderminus}
    For any $\alpha\in I_{k,\mu}$ with $[\alpha]=\gamma$, we have $[\mu-\gamma]\trianglelefteq [\mu-\alpha]$.
\end{lemma}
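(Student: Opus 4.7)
\medskip

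\noindent\textbf{Proof plan.} The plan is to reduce the inequality $[\mu-\gamma]\trianglelefteq[\mu-\alpha]$ to a single adjacent-transposition step, and then use the bubble-sort argument together with the union inequality \eqref{order-union} to iterate.

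First, since $\alpha\in I_{k,\mu}$, each entry satisfies $\alpha_j\le\mu_j$; combined with the fact that $\mu$ is weakly decreasing, a short rearrangement argument shows that the sorted partition $\gamma=[\alpha]$ also satisfies $\gamma_j\le\mu_j$ for all $j$ (since among any $j$ indices of $\alpha$ at least one has index $\ge j$, so $\gamma_j\le\alpha_i\le\mu_i\le\mu_j$ for that $i$). In particular $\mu-\gamma$ has non-negative entries, and $[\mu-\gamma]$ and $[\mu-\alpha]$ are partitions of the same integer $|\mu|-|\alpha|$.

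Next, I would induct on the number of inversions needed to sort $\alpha$ into $\gamma$. The base case $\alpha=\gamma$ is trivial. For the inductive step, choose indices $i<j$ with $\alpha_i<\alpha_j$, and let $\alpha'$ be obtained from $\alpha$ by swapping $\alpha_i$ and $\alpha_j$; then $\alpha'$ has one fewer inversion relative to $\gamma$, and $[\alpha']=\gamma$. It suffices to show $[\mu-\alpha']\trianglelefteq[\mu-\alpha]$, because the inductive hypothesis then gives $[\mu-\gamma]\trianglelefteq[\mu-\alpha']\trianglelefteq[\mu-\alpha]$.

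Since $\mu-\alpha$ and $\mu-\alpha'$ agree outside the positions $i,j$, write $\mu-\alpha=\delta\sqcup(\mu_i-\alpha_i,\mu_j-\alpha_j)$ and similarly for $\alpha'$, where $\delta$ is the common remaining sequence. By \eqref{order-union}, the claim $[\mu-\alpha']\trianglelefteq[\mu-\alpha]$ reduces to the two-entry comparison
\[
[(\mu_i-\alpha_j,\mu_j-\alpha_i)]\ \trianglelefteq\ [(\mu_i-\alpha_i,\mu_j-\alpha_j)].
\]
Setting $a=\mu_i\ge b=\mu_j$ and $x=\alpha_i<y=\alpha_j$, with $y\le b$, one checks by a short case analysis (depending on whether $a-b\ge y-x$ or not) that $a-x$ is the maximum of both pairs shifted appropriately, and that the sorted partition of $(a-y,b-x)$ is dominated by $(a-x,b-y)$; both have equal sum, so the first-coordinate comparison suffices. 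This is the only computation in the proof and is the main (albeit elementary) step.

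Putting the pieces together, one swap decreases $[\mu-\alpha]$ in dominance order, and iterating down to $\gamma$ yields $[\mu-\gamma]\trianglelefteq[\mu-\alpha]$, completing the proof.
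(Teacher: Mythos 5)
Your proof is correct and follows essentially the same route as the paper's: both reduce to a single transposition of two entries of the subtracted sequence, verify the two-part dominance comparison directly, and then invoke \eqref{order-union} to propagate it to the full partitions (the paper phrases the induction via a reduced expression of the permutation taking $\gamma$ to $\alpha$, while you count inversions, but this is the same bubble-sort argument). The preliminary observation that $\gamma_j\le\mu_j$ is a harmless addition not needed for the dominance comparison.
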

\begin{proof}
Let 
    $\gamma=(\gamma_1,\cdots,\gamma_{l(\mu)})$, where some entries of $\gamma$ may be zero. Then, there exsits some $w\in \mathfrak{S}_{l(\mu)}$ such that 
    $\gamma\cdot w=\alpha$.
We prove  by induction on $\ell(w)=r$.
    Fix a reduced expression $w=s_{i_1}s_{i_2}\cdots s_{i_r}$. 
   Write $w'=s_{i_1}\cdots s_{i_{r-1}}$ and $\alpha'=\gamma \cdot w'$.
   Then,  there exsit indices  $p<q$ such that 
   $\alpha'_p=\gamma_{i_r}$ and 
   $\alpha'_q=\gamma_{i_{r+1}}$
   with $\gamma_{i_r}\ge \gamma_{i_r+1}$. Moreover,  we have the following 
\begin{align*}
   \mu-\alpha'&=(\mu_1-\alpha'_1, \ldots,\mu_p-\gamma_{i_r}, \ldots, \mu_q-\gamma_{i_{r}+1},\ldots) \\
\mu-\alpha&=(\mu_1-\alpha'_1, \ldots,\mu_p-\gamma_{i_r+1}, \ldots, \mu_q-\gamma_{i_{r}},\ldots) 
\end{align*}
  Note that  $\mu_p-\gamma_{i_r+1}\ge\mu_p-\gamma_{i_r}$, which implies that  
  $[\mu_p-\gamma_{i_r+1},\mu_q-\gamma_{i_r}]\trianglerighteq[\mu_p-\gamma_{i_r},\mu_q-\gamma_{i_r+1}]$ and hence,  by \eqref{order-union}, we conclude that 
$[\mu-\alpha']\trianglelefteq[\mu-\alpha]$. This completes the proof by the induction hypothesis. 
\end{proof}

For any $k\in \Z_{\ge1}$ and  $d\in \N$, 
we define the set  
\[C_{k,d}=\{(\lambda,\mu)|\lambda\in\SPar_{a,k},\mu\in\Par_a,|\bar\lambda|+|\mu|=d\}.\]
Next, we define the following partial order $\le$ on the pairs in  $C_{k,d}$ as follows:
\begin{equation}
\label{order-on-pairs}
    (\alpha,\beta)<(\lambda,\mu) \iff \bar\alpha\cup\beta\lhd\bar\lambda\cup\mu \text{ or } \bar\alpha\cup\beta=\bar\lambda\cup\mu \text{ and } \alpha<\lambda,
\end{equation}
where $\alpha<\lambda$ is the lexicographical order. 
 
For any $\alpha, \mu\in \Par$, we shall write $\alpha\subset\mu$ to mean that $\alpha_i\le\mu_i, \forall i\ge1$. In the following, we fix  an element  $(\lambda,\mu)\in C_{k,d}$ and 
 define 
\[\gamma_{\lambda,\mu}=(k^{d_0},(k-1)^{d_1},\ldots, (k-i)^{d_i},\ldots, 1^{d_{k-1}},0^{d_k}),\]
where  
 \[d_i=\sharp\{\mu_j \mid \bar\lambda_{i+1}<\mu_j\le \bar\lambda_i, 1\le j\le l(\mu) \}, \quad 0\le i\le k. \]
 Here,  $\bar\lambda_0=\infty$, $\bar\lambda_{k+1}=-1$ by convention. Then, by definition,  we have 
 $\gamma_{\lambda, \mu}\subset \mu$, and moreover,
 $\mu-\gamma_{\lambda,\mu}\in \Par$. 
 
Next, we define  $\tilde\lambda=\bar\lambda-\rho_k$ for $\lambda\in\SPar_{a,k}$, where $\rho_k$ is given in \eqref{equ:defofrhok}.
Let  
 $\nu:=\tilde \lambda\cup (\mu-\gamma_{\lambda,\mu})$.
 Then, $\nu$ is given by  
 \begin{equation}
     \label{nu}
    \nu=(\mathbf p_0, \tilde\lambda_1, \mathbf p_1, \tilde \lambda_2,\mathbf p_2 \ldots, \mathbf p_{k-1},\tilde \lambda_k, \mathbf p_{k} ),
 \end{equation}
 where $\mathbf p_i=(\mu_{p_{i-1}+1}-k+i, \ldots, \mu_{p_i}-k+i )$, $p_i=\sum_{j\le i}d_j$ for $0\le i\le k$ with $p_{-1}:=0$.
 
 Note that for  $z\in\{1,\cdots,k\}$
 \begin{equation}
 \label{equ:defofnu}
    \nu_r=\begin{cases}
       \tilde\lambda_z & \text{ if $r=p_{z-1}+z$}\\
        \mu_{r-z}-(k-z) & \text{if $p_{z-1}+z<   r<p_{z}+z+1$.}
    \end{cases}. 
 \end{equation}   
For any $(\lambda,\mu)\in C_{k,d}$ and  any $\alpha\in \SPar_{a,k}$, we define $C(\lambda,\mu)=\bigsqcup_{\alpha} C(\lambda,\mu)_\alpha$ with 
    \[C(\lambda,\mu)_\alpha:=\{(\alpha,\beta)\in C_{k,d}|\tilde\alpha\cup[\beta-\gamma_{\lambda,\mu}]=\nu, \gamma_{\lambda,\mu}\subset \beta\}\]

Note that for any $(\alpha,\beta)\in C(\lambda,\mu) $, we have $\tilde\alpha\subset_p \nu$, where $ \subset_p$ means $\tilde \alpha$ is obtained from $\nu$ by deleting some components.
Let $\nu\setminus \tilde\alpha$ be the partition obtained from $\nu$ by deleting the parts corresponding to $\tilde\alpha$. Define $\beta_\alpha=\nu\setminus \tilde\alpha+ \gamma_{\lambda,\mu}$.
Then, $(\alpha,\beta_\alpha)\in C(\lambda,\mu)_\alpha$ and $\beta_{\lambda}=\mu$.
This also implies that  $C(\lambda,\mu)_\alpha\neq \emptyset  $ if and only if 
$\tilde \alpha\subset_p \nu$.  

Finally, we define $V_{\lambda,\mu}:=\{\alpha\in \SPar_{a,k}|\tilde\alpha\subset_p\nu\}$.

\begin{lemma}
\label{Lem:maximalalpha}
Suppose  $\alpha\in V_{\lambda,\mu}
$. Then  for  all $(\alpha,\beta)\in C(\lambda,\mu)_\alpha$, we have
$(\alpha,\beta)\le(\alpha,\beta_\alpha)$.
\end{lemma}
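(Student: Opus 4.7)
The first step is to reduce the claim to a dominance comparison between $\beta$ and $\beta_\alpha$. Since both pairs have the same first component $\alpha$, the definition \eqref{order-on-pairs} of $\le$ on $C_{k,d}$ shows that $(\alpha,\beta)\le(\alpha,\beta_\alpha)$ holds as soon as $\bar\alpha\cup\beta\trianglelefteq\bar\alpha\cup\beta_\alpha$ (because the second clause of \eqref{order-on-pairs} would require $\alpha<\alpha$, which is impossible, and equality $\bar\alpha\cup\beta=\bar\alpha\cup\beta_\alpha$ with the same $\bar\alpha$ forces $\beta=\beta_\alpha$). By \eqref{order-union}, this in turn follows from $\beta\trianglelefteq\beta_\alpha$, so everything reduces to establishing that last dominance.

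The next step is to translate the conditions defining $C(\lambda,\mu)_\alpha$ into a rearrangement statement. After padding $\gamma_{\lambda,\mu}$ with zeros so that it has the same length as $\beta$, the hypothesis $\gamma_{\lambda,\mu}\subset\beta$ allows me to write $\beta_i=\gamma_{\lambda,\mu,i}+\rho_i$ for the nonnegative sequence $\rho:=\beta-\gamma_{\lambda,\mu}$, and the hypothesis $[\beta-\gamma_{\lambda,\mu}]=\nu\setminus\tilde\alpha$ says precisely that $\rho$ is a permutation of the parts of the partition $\nu\setminus\tilde\alpha$. On the other hand, by its very definition $(\beta_\alpha)_i=\gamma_{\lambda,\mu,i}+(\nu\setminus\tilde\alpha)_i$, i.e.\ $\beta_\alpha$ is obtained by pairing the two decreasing sequences $\gamma_{\lambda,\mu}$ and $\nu\setminus\tilde\alpha$ in matching order.

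The final step is a rearrangement-style inequality in the spirit of Lemma~\ref{lem:oderminus}. For any $1\le r\le l(\beta)$,
\begin{equation*}
\sum_{i=1}^r \beta_i = \sum_{i=1}^r \gamma_{\lambda,\mu,i}+\sum_{i=1}^r \rho_i \le \sum_{i=1}^r \gamma_{\lambda,\mu,i}+\sum_{i=1}^r (\nu\setminus\tilde\alpha)_i = \sum_{i=1}^r (\beta_\alpha)_i,
\end{equation*}
because $(\nu\setminus\tilde\alpha)_1\ge\cdots\ge(\nu\setminus\tilde\alpha)_r$ are the $r$ largest values in the multiset of the $\rho_i$'s. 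Combined with the equality of totals $|\beta|=|\beta_\alpha|=d-|\bar\alpha|$, this yields $\beta\trianglelefteq\beta_\alpha$ and hence the lemma.

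The main obstacle I anticipate is purely bookkeeping: one has to line up $\gamma_{\lambda,\mu}$ (which may have trailing zeros, and whose ``natural'' length is $l(\mu)$ while $\beta$ is merely required to lie in $\Par_a$) and $\nu\setminus\tilde\alpha$ as sequences of a common length so that the componentwise identities $\beta=\gamma_{\lambda,\mu}+\rho$ and $\beta_\alpha=\gamma_{\lambda,\mu}+(\nu\setminus\tilde\alpha)$ literally hold, and to check that the length counts $l(\nu)=k+l(\mu)$ and $l(\tilde\alpha)=k$ force $l(\nu\setminus\tilde\alpha)=l(\mu)$ (using the description of $\nu$ in \eqref{equ:defofnu}). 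Once that alignment is in place, the core of the argument is just the partial-sum comparison above, which is the classical way majorization is deduced from rearrangement.
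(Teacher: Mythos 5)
Your argument is correct and follows essentially the same route as the paper: reduce to showing $\beta\trianglelefteq\beta_\alpha$, observe that $[\beta-\gamma_{\lambda,\mu}]=\nu\setminus\tilde\alpha=\beta_\alpha-\gamma_{\lambda,\mu}$, and then conclude via \eqref{order-union}. The only difference is that the paper obtains $\beta\trianglelefteq\beta_\alpha$ by citing \cite[Ch.~I, (1.12)]{Mac}, whereas you prove that same dominance directly with the partial-sum/rearrangement inequality, which is a perfectly adequate substitute.
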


\begin{proof}
    For any $(\alpha,\beta)\in C(\lambda,\mu)$, the definition immediately yields 
    $\tilde\alpha\cup[\beta-\gamma_{\lambda,\mu}]=\nu$.
  This implies that 
    $[\beta-\gamma_{\lambda,\mu}]=\nu\setminus \tilde\alpha =\beta_\alpha-\gamma_{\lambda,\mu}$. 
   From this, we conclude that   $\beta\trianglelefteq\beta_\alpha$ by \cite[Chapte I, (1.12)]{Mac}. Combining this with \eqref{order-union}, we obtain  $\bar\alpha\cup\beta\trianglelefteq \bar\alpha\cup\beta_\alpha$. Thus,     $(\alpha,\beta)\le (\alpha,\beta_\alpha)$ as desired. 
\end{proof}

 We define an undirected graph structure on the set $V_{\lambda,\mu}$ as follows.  
 We attach an edge colored by $i$ between $\alpha$ and $\alpha'$,  denoted by  $\alpha \overset{i}\sim \alpha' $, 
if the following conditions hold:
\begin{enumerate}
    \item $\tilde \alpha'$ is obtained from $ \tilde \alpha$ by replacing some $\tilde\alpha_i=\nu_r$ with 
$\nu_s$ such that $\nu_r\neq \nu_s$.
\item There is no  $\nu_i$ such that $\nu_r<\nu_i<\nu_s$ or $\nu_r>\nu_i>\nu_s$.
\end{enumerate}

\begin{lemma}
For any $\alpha\in V_{\lambda,\mu}$, there is a path from $\alpha$ to $\lambda$. In particular, 
    $V_{\lambda,\mu}$ is a connected undirected graph.  
\end{lemma}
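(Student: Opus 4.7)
The plan is to recast membership in $V_{\lambda,\mu}$ as a lattice problem and prove connectivity of the resulting graph by a direct induction. Let $v_1 > v_2 > \cdots > v_m$ denote the distinct values appearing in $\nu$, with multiplicities $c_j = \sharp\{i : \nu_i = v_j\}$. For each $\alpha \in V_{\lambda,\mu}$, the partition $\tilde\alpha$ is uniquely determined by its multiplicity vector $e^\alpha = (e_1^\alpha, \ldots, e_m^\alpha)$, where $e_j^\alpha$ counts the parts of $\tilde\alpha$ equal to $v_j$. Since $\alpha$ strict with $k$ parts forces $\tilde\alpha$ to be a weakly decreasing sequence of length $k$, and $\tilde\alpha \subset_p \nu$ forces $e_j^\alpha \leq c_j$, the assignment $\alpha \mapsto e^\alpha$ gives a bijection between $V_{\lambda,\mu}$ and the set $\mathcal L$ of integer vectors $(e_1,\ldots,e_m)$ satisfying $0 \leq e_j \leq c_j$ for all $j$ and $\sum_j e_j = k$. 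Under this bijection the edge relation translates to: $e^{\alpha'}$ is obtained from $e^\alpha$ by decreasing some coordinate $e_j$ by one and increasing an adjacent coordinate $e_{j\pm 1}$ by one, with both endpoints in $\mathcal L$; the ``no intermediate $\nu_i$'' clause in the edge definition is precisely what forces $|j-j'|=1$ among distinct values of $\nu$. Note that $\lambda$ itself lies in $V_{\lambda,\mu}$ because $\tilde\lambda \subset_p \nu$ by the construction \eqref{nu}.

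I will then prove connectivity of the graph on $\mathcal L$ by induction on $m$, which immediately yields the lemma. The base case $m=1$ is trivial. For the inductive step, given $\alpha, \beta \in V_{\lambda,\mu}$: if $e_m^\alpha = e_m^\beta$, restricting to coordinates $1,\ldots,m-1$ yields a smaller instance with parameters $(m-1,\, k - e_m^\alpha)$, and the induction hypothesis supplies a path whose moves avoid coordinate $m$ and are therefore admissible in the original graph. Otherwise, without loss of generality $e_m^\alpha < e_m^\beta \leq c_m$, and I produce a path from $\alpha$ to some $\alpha^{(1)}$ with $e_m^{\alpha^{(1)}} = e_m^\alpha + 1$ as follows. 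Choose the largest index $j^\ast < m$ with $e_{j^\ast}^\alpha \geq 1$, which exists because $\sum_{i<m} e_i^\alpha = k - e_m^\alpha > 0$. By maximality, $e_i^\alpha = 0$ for $j^\ast < i < m$. Performing the single-step moves that push one unit rightward, first from $j^\ast$ to $j^\ast+1$, then from $j^\ast+1$ to $j^\ast+2$, $\ldots$, and finally from $m-1$ to $m$, is admissible at every step: each intermediate coordinate begins empty before the unit arrives, and the final target satisfies $e_m^\alpha + 1 \leq c_m$. Concatenation yields the required path, and iterating at most $e_m^\beta - e_m^\alpha$ times reduces to the equal-last-coordinate case.

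The main point requiring care is the validity check for the shuffle in the second case: one must ensure every intermediate lattice point remains in $\mathcal L$, which is immediate from the choice of $j^\ast$ (leaving intermediate coordinates empty before and after each local move) and from the bound $e_m^\alpha < c_m$. Once connectivity of $\mathcal L$ is established, applying it with $\beta = \lambda$ produces a path from any $\alpha \in V_{\lambda,\mu}$ to $\lambda$, and the connectedness of $V_{\lambda,\mu}$ as an undirected graph follows at once.
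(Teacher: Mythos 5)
Your proof is correct, and it takes a genuinely different route from the paper's. You recast $V_{\lambda,\mu}$ as the lattice $\mathcal L=\{(e_1,\dots,e_m): 0\le e_j\le c_j,\ \sum_j e_j=k\}$ of multiplicity vectors and observe that the edge relation becomes an adjacent unit transfer $e_j\mapsto e_j-1$, $e_{j\pm1}\mapsto e_{j\pm1}+1$; connectivity then follows by a clean induction on the number $m$ of distinct values of $\nu$, with the ``push one unit through empty intermediate slots to the last coordinate'' subroutine doing all the work. The paper instead builds a path directly from $\alpha$ to $\lambda$ in two greedy phases (first raising every entry with $\tilde\alpha_i<\tilde\lambda_i$, then lowering every entry with $\tilde\theta_s>\tilde\lambda_s$), and at each step invokes a counting comparison $\sharp M<\sharp N$ to guarantee an available target value; your argument replaces that counting by the trivial facts that every $c_j\ge 1$ and that the intermediate coordinates are empty, and it proves the stronger statement that \emph{any} two vertices are joined. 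Two remarks. First, your claim that $\alpha\mapsto e^\alpha$ is a bijection onto $\mathcal L$ (not merely an injection) is load-bearing — otherwise a path in $\mathcal L$ could pass through vectors not corresponding to elements of $V_{\lambda,\mu}$ — and it requires checking that every part of $\nu$ is at most $a-k$, so that each $k$-element sub-multiset of $\nu$ reassembles (after adding $(k,k-1,\dots,1)$) into an element of $\SPar_{a,k}$; this does hold, since $\tilde\lambda_i\le a-k$ and, for $p_{i-1}<j\le p_i$, $\mu_j\le\bar\lambda_i\le a-i$ gives $\mu_j-(k-i)\le a-k$, but you should say so. Second, be aware that the paper reuses the \emph{specific} two-phase paths \eqref{equ:stepone1} and \eqref{equ:steptwo1} in the proof of Lemma \ref{Lem:maxmalofC}, where the partial order \eqref{order-on-pairs} is shown to increase monotonically along them; your paths are not obviously monotone for that order, so substituting your proof here would force a reworking of that later argument (or retaining the paper's construction for that purpose).
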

\begin{proof}
     We construct such a path in two steps.
    
\textbf{Step (1):} Suppose there is some $i\in\{1,\cdots,k\}$ such that $\tilde\alpha_i<\tilde\lambda_i$ and $\tilde\alpha_j\ge \tilde\lambda_j$, for all $ 1\le j\le i-1$.  Let $M:=\{h|\tilde\alpha_{i-1}\ge\tilde\alpha_h>\tilde\alpha_i,1\le h\le k\}$ 
and $N:=\{h|\tilde\alpha_{i-1}\ge\nu_h>\tilde\alpha_i,h\ge 1\}$.
We claim $\sharp M<\sharp N$.
In fact, for any $ h\in M$, we must have  $h\in\{1,\cdots,i-1\}$.
Then, 
\[\tilde\alpha_{i-1}\ge\tilde\alpha_h\ge\tilde\lambda_h\ge\tilde\lambda_i>\tilde\alpha_i.\]
Recall that $\nu_{p_{h-1}+h}=\tilde \lambda_h$, so  $p_{h-1}+h\in N$. On the other hand, we have 
$p_{i-1}+i\in N$ since 
 $\tilde\alpha_{i-1}\ge\tilde\lambda_{i-1}\ge \tilde\lambda_i>\tilde\alpha_i$.
 Thus, $\sharp M<\sharp N$ as $N$ contains at least one more element than $M$.

 By the claim, we 
may find a $\nu_r$,   the smallest element  of $\{\nu_1,\cdots,\nu_{k+l(\mu)}\}$ such that $r\in N$ and 
$\tilde\alpha^1=(\tilde\alpha_1,\cdots,\tilde\alpha_{i-1},\nu_r,\tilde\alpha_{i+1},\cdots,\tilde\alpha_k)\in V_{\lambda,\mu}$.  
Moreover, $\alpha\overset{i}{\backsim}\alpha^1$ and    $\tilde\alpha_{i-1}\ge \nu_r>\tilde\alpha_{i}$. 
By repeating the above argument,  we can find a path from $\alpha$ to $\alpha^{m}$ such that
\begin{equation}
\label{equ:stepone1}
\alpha\overset{i}{\backsim}\alpha^{1}\overset{i}{\backsim}\alpha^{2}\overset{i}{\backsim}\cdots\overset{i}{\backsim}\alpha^{m}
\end{equation}
and $\tilde\alpha^{j}=(\tilde\alpha_1,\cdots,\tilde\alpha_{i-1},x(j),\tilde\alpha_{i+1},\cdots,\tilde\alpha_k)$, where $x(j)<x(j+1),\forall j$ and $x(m)=\tilde\lambda_i$.  

Now repeating the construction of the path (with each edge colored by the some $i$) in this fashion,  we can find a longer  path from $\alpha$ to $\theta$ such that
$$
\alpha\overset{i_1}{\backsim}f^{1}\overset{i_2}{\backsim}f^{2}\overset{i_3}{\backsim}\cdots\overset{i_s}{\backsim}f^{s}=\theta,
$$ 
where the resulting $\theta\in V_{\lambda,\mu}$ satisfies  $\tilde\theta_i\ge\tilde\lambda_i, \forall i=1,\cdots,k$, and  each component $f^j\overset{i_j}\sim f^{j+1}$ is a path constructed as in \eqref{equ:stepone1}, labeled by the same color $i_j$, with $i_1\le i_2\le\cdots\le i_s$.

\textbf{Step (2):} Next,  we  construct a path from $\theta$ to $\lambda $ where 
$\tilde \theta_i\ge \tilde \lambda_i$, for all $1\le i\le k$.
Suppose that there is  some $s\in\{1,\cdots,k\}$ 
 such that  $\tilde\theta_s>\tilde\lambda_s$ and $\tilde\theta_r=\tilde\lambda_r$ for $ r\ge s+1$.
Let $M':=\{ h|\tilde\theta_s>\tilde\theta_h\ge\tilde\theta_{s+1},1\le h\le k\}$   and $N':=\{ h|\tilde\theta_s>\nu_h\ge\tilde\theta_{s+1},h\ge 1\}$.
Using the same argument as that for the claim in Step (1) we see  that  $\sharp M'<\sharp N'$.
Thus, as in Step (1), we
  can find a path from $\theta$ to $\theta^{m'}$ such that
\begin{equation}
\label{equ:steptwo1}
 \theta\overset{s}{\backsim}\theta^1\overset{s}{\backsim}\theta^2\overset{s}{\backsim}\cdots\overset{s}{\backsim}\theta^{m'}   
\end{equation} 
with  $\tilde\theta^j=(\tilde\theta_1,\cdots,\tilde\theta_{s-1},y(j),\tilde\theta_{s+1},\cdots,\tilde\theta_k)$, where $y(j)>y(j+1),\forall j$ and $y(m')=\tilde\lambda_s$. 
Repeating this procedure,  we can find a path from $\theta$ to $\lambda$ such that
$$
\theta\overset{s_1}{\backsim}p^{1}\overset{s_2}{\backsim}p^{2}\overset{s_3}{\backsim}\cdots\overset{s_n}{\backsim}p^{n'}=\lambda, s_1\ge s_2\ge\cdots\ge s_{n'}.
$$

 Finally, the required path from 
 $\alpha$ to $\lambda$ is obtained by combining the path in
Step (1) first from $\alpha$ to $\theta$ and then the path in Step (2)  from $\theta$ to $\lambda$. This shows that $V_{\lambda,\mu}$  is a connected undirected graph.
\end{proof}

\begin{lemma}
\label{Lem:maxmalofC}
    The element $(\lambda,\mu)$
    is the maximal element in $C(\lambda,\mu)$ with respect to the order $\le $ in \eqref{order-on-pairs}.
\end{lemma}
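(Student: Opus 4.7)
By Lemma \ref{Lem:maximalalpha}, every $(\alpha,\beta)\in C(\lambda,\mu)_\alpha$ satisfies $(\alpha,\beta)\le(\alpha,\beta_\alpha)$, so it suffices to prove $(\alpha,\beta_\alpha)\le(\lambda,\beta_\lambda)=(\lambda,\mu)$ for every $\alpha\in V_{\lambda,\mu}$. My plan is to exploit the connectivity of $V_{\lambda,\mu}$ established in the preceding lemma: any $\alpha$ is linked to $\lambda$ by a path consisting first of Step~(1) edges and then Step~(2) edges. Via path induction, the statement reduces to a local check: for each edge $\alpha\overset{i}{\sim}\alpha'$ traversed in the direction towards $\lambda$ (that is, $\tilde\alpha_i$ increasing in Step~(1) or decreasing in Step~(2) as in \eqref{equ:stepone1}, \eqref{equ:steptwo1}), I need $(\alpha,\beta_\alpha)\le(\alpha',\beta_{\alpha'})$.

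To perform the local check, I would use the condition built into the definition of an edge: $\tilde\alpha_i=\nu_r$ and $\tilde\alpha'_i=\nu_s$ differ only at coordinate $i$, and no $\nu_h$ lies strictly between $\nu_r$ and $\nu_s$. This ``no intermediate value'' condition forces $\nu\setminus\tilde\alpha$ and $\nu\setminus\tilde\alpha'$ to differ in exactly one position $p$, where the former has the part $\nu_s$ and the latter has $\nu_r$; all other parts, and their positions in the sorted arrangement, are identical. Adding the fixed partition $\gamma_{\lambda,\mu}$ coordinatewise then gives $\beta_\alpha$ and $\beta_{\alpha'}$ agreeing outside position $p$, where they differ by $\nu_s-\nu_r$. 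Coupled with the corresponding difference in $\bar\alpha$, namely $\bar\alpha_i=\nu_r+(k-i)$ against $\bar\alpha'_i=\nu_s+(k-i)$, the multisets $\bar\alpha\cup\beta_\alpha$ and $\bar{\alpha'}\cup\beta_{\alpha'}$ have equal sum $d$ and differ only by transferring the amount $\nu_s-\nu_r$ between two entries. To conclude $\bar\alpha\cup\beta_\alpha\trianglelefteq\bar{\alpha'}\cup\beta_{\alpha'}$, I would verify that the entry acquiring the transfer sits at least as high in the sorted order as the entry losing it, which is a direct Young-diagram inequality.

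The main obstacle is precisely this Young-diagram comparison. In Step~(1) one must show that the offset $k-i$ (attached to the altered position of $\bar\alpha$) is at least the offset $(\gamma_{\lambda,\mu})_p$ (attached to the altered position of $\beta_\alpha$), so that migrating the value $\nu_s-\nu_r$ from $\beta$ into $\bar\alpha$ produces a dominant partition. This reduces to comparing the local structure of $\nu$ around the pair $\nu_r,\nu_s$ with the block shape of $\gamma_{\lambda,\mu}=(k^{d_0},(k-1)^{d_1},\dots,1^{d_{k-1}},0^{d_k})$, which can be extracted from the definition of the $d_i$ together with the description \eqref{equ:defofnu} of $\nu$. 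For Step~(2) edges the roles are swapped and the analogous inequality applies in reverse. In borderline cases where the dominance comparison yields equality $\bar\alpha\cup\beta_\alpha=\bar{\alpha'}\cup\beta_{\alpha'}$, I would fall back on the lexicographic clause of \eqref{order-on-pairs}, using the monotonicity of the sequences $x(j)$ and $y(j)$ constructed in the previous lemma to ensure $\alpha<\alpha'$ in lex order along the path to $\lambda$. Stitching together these monotone local moves finishes the induction and identifies $(\lambda,\mu)$ as the maximum of $C(\lambda,\mu)$.
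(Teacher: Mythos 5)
Your proposal is correct and follows essentially the same route as the paper: reduce via Lemma \ref{Lem:maximalalpha} to comparing $(\alpha,\beta_\alpha)$ with $(\lambda,\mu)$, walk the connectivity path edge by edge, and at each edge compare the two multisets that differ only by transferring $\nu_s-\nu_r$ between an entry of $\bar\alpha$ (carrying offset $k-i$) and an entry of $\beta_\alpha$ (carrying offset $\gamma_{h+1}$), invoking the lexicographic clause when the dominance comparison is an equality. The one step you leave as a claim --- that $k-i\ge\gamma_{h+1}$ on Step (1) edges and the reverse strict inequality on Step (2) edges --- is exactly the computation the paper carries out, via $\nu_r\le\tilde\lambda_i\Rightarrow h+1>p_{i-1}$ (resp.\ $\tilde\theta_s>\tilde\lambda_s\Rightarrow z+1\le p_{s-1}$), so your sketch fills in as intended.
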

\begin{proof}
Thanks to Lemma \ref{Lem:maximalalpha}, 
    it suffices to prove that $(\alpha,\beta_{\alpha})<(\alpha^1,\beta_{\alpha^1})<\ldots<(\alpha^m,\beta_{\alpha^m})$ and $(\theta,\beta_{\theta})<(\theta^1,\beta_{\theta^1})<\ldots<(\theta^{m'},\beta_{\theta^{m'}})$ corresponding to \eqref{equ:stepone1} and \eqref{equ:steptwo1}. Furthermore, it suffices to show $(\alpha,\beta_{\alpha})<(\alpha^1,\beta_{\alpha^{1}})$ and $(\theta,\beta_{\theta})<(\theta
    ^1,\beta_{\theta^{1}})$ since the others may be proved by the same arguments.
   
    Recall $\beta_\alpha=\nu\setminus \tilde\alpha+\gamma_{\lambda,\mu}$ and $\alpha \overset{i}\sim \alpha^1$. Let $h=\sharp\{\nu_j|\nu_j\ge\nu_r\}-i$. We may write 
    $\beta_\alpha-\gamma_{\lambda,\mu}=\nu\setminus \tilde \alpha$   as $(x_1,\ldots,x_h,\nu_r,x_{h+2},\ldots)$, where  $x_j$'s are components of $\nu$ and  
     $x_1\ge x_2\ge\ldots\ge  x_h\ge\nu_r>\tilde\alpha_i\ge x_{h+2}$.
     Moreover, $\beta_{\alpha^1}-\gamma_{\lambda,\mu}= \nu \setminus \tilde \alpha^1=(x_1,\ldots,x_h,\tilde\alpha_i,x_{h+2},\ldots)$. 
    Thus, there are only two different parts between 
    $\bar \alpha\cup \beta_\alpha$ and  $\bar \alpha^1\cup \beta_{\alpha^1}$. More explicitly,    
\[
\bar\alpha\cup\beta_{\alpha}=[\tilde\alpha_i+(k-i),\nu_r+\gamma_{h+1}]\cup\sigma,\quad 
\bar\alpha^1\cup\beta_{\alpha^1}=[\tilde\alpha_i+\gamma_{h+1},\nu_r+(k-i)]\cup\sigma,
\]
where $\sigma$ represents the partition consisting of the other common elements.
Note that $\nu_r\le\tilde\lambda_i$ implies $\#\{\nu_j|\nu_j\ge\nu_r\}> p_{i-1}+(i-1)$ by \eqref{nu}, i.e.,    $h+1>p_{i-1}$. Therefore,  $\gamma_{h+1}\le k-i$ and we have
\[[\tilde\alpha_i+(k-i),\nu_r+\gamma_{h+1}]\trianglelefteq [\tilde\alpha_i+\gamma_{h+1},\nu_r+(k-i)]=(\nu_r+(k-i), \tilde\alpha_i+\gamma_{h+1})\]
 The equality holds if and only if  $k-i=\gamma_{h+1}$. In any case, we conclude that $(\alpha,\beta_\alpha)<(\alpha^1,\beta_{\alpha^1})$ since   $\alpha<\alpha^1$.

Next we prove $(\theta,\beta_{\theta})<(\theta
    ^1,\beta_{\theta^{1}})$.
    Recall that $\theta \overset {s}\sim \theta^1$. Let 
$z=\sharp\{\nu_j|\nu_j\ge\tilde\theta_s\}-s$.
Similar to the relation between $\alpha$ and $\alpha^1$, we have 
 \[\bar\theta\cup\beta_{\theta}=[\tilde\theta_s+(k-s),y(1)+\gamma_{z+1}]\cup\eta, \quad 
\bar\theta^1\cup\beta_{\theta^1}=[\tilde\theta_s+\gamma_{z+1},y(1)+(k-z)]\cup\eta,
\]
where $\eta$ represents the partition consisting of the other common elements.
Note that $\tilde\theta_s>\tilde\lambda_s$ implies $\#\{\nu_j|\nu_j\ge\tilde\theta_s\}\le p_{s-1}+(s-1)$ by \eqref{nu}, i.e.,  $z+1\le p_{s-1}$. Therefore,  $\gamma_{z+1}> k-s$ and we have 
$$
[\tilde\theta_s+(k-s),y(1)+\gamma_{z+1}]\lhd [\tilde\theta_s+\gamma_{z+1},y(1)+(k-z)]=(\tilde\theta_s+\gamma_{z+1}, y(1)+(k-z)).
$$ 
This implies that $\bar\theta\cup\beta_{\theta}\lhd \bar\theta^1\cup\beta_{\theta^1}$
by \eqref{order-union},
and hence $(\theta,\beta_\theta)<(\theta^1,\beta_{\theta^1})$. This completes the proof.
\end{proof}

\begin{lemma}
\label{partial-order}
Let $(\lambda,\mu)\in C_{k,d}$. Then
for any $(\alpha,\beta)\in C_{k,d}$ such that  $\tilde\alpha\cup[\beta-\gamma_{\lambda,\mu}]\trianglelefteq\tilde\lambda\cup(\mu-\gamma_{\lambda,\mu})$, we have $(\alpha,\beta)\le (\lambda,\mu)$.
\end{lemma}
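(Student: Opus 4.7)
The plan is to reduce the comparison between $\bar\alpha\cup\beta$ and $\bar\lambda\cup\mu$ to the given hypothesis via a single ``greedy lift'' operation. Set $M:=\rho_k\cup\gamma_{\lambda,\mu}$ as a multiset of non-negative integers, arrange it in weakly decreasing order $m_1\ge m_2\ge\cdots$, and define, for any partition $\sigma$ of size $d-|\rho_k|-|\gamma_{\lambda,\mu}|$ (padded by zeros to the length of $M$), the lift $L(\sigma)_i:=\sigma_i+m_i$. Since the componentwise sum of two weakly decreasing sequences is weakly decreasing, $L(\sigma)$ is automatically a partition.

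I would then establish three key claims. \emph{(a)} For any $(\alpha,\beta)\in C_{k,d}$ with $\gamma_{\lambda,\mu}\subset\beta$, the partition $\bar\alpha\cup\beta$ satisfies $\bar\alpha\cup\beta\trianglelefteq L(\sigma)$, where $\sigma:=\tilde\alpha\cup[\beta-\gamma_{\lambda,\mu}]$. This holds because $\bar\alpha\cup\beta$ is obtained from $\sigma$ by adding the multiset $M$ to its parts via a specific (typically non-greedy) assignment, namely $\rho_k$ to the $\tilde\alpha$-positions and $\gamma_{\lambda,\mu}$ to the $(\beta-\gamma_{\lambda,\mu})$-positions; a standard partial-sums argument shows any such assignment is dominated by the greedy one. \emph{(b)} $L(\nu)=\bar\lambda\cup\mu$ on the nose. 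Using the explicit block form \eqref{nu} of $\nu$, the top $d_0$ parts $\mathbf p_0$ pair with the top $d_0$ copies of $k$ in $M^*$ to recover $\{\mu_j:\mu_j>\bar\lambda_1\}$, and iteratively the next $d_i+1$ consecutive parts $(\tilde\lambda_i,\mathbf p_i)$ pair with the $d_i+1$ copies of $k-i$ (one from $\rho_k$ and $d_i$ from $\gamma_{\lambda,\mu}$) to produce $\bar\lambda_i$ together with $\{\mu_j:\bar\lambda_{i+1}<\mu_j\le\bar\lambda_i\}$. \emph{(c)} $L$ is monotone with respect to dominance: $\sigma\trianglelefteq\nu$ implies $L(\sigma)\trianglelefteq L(\nu)$, since the partial sums pick up the same $\sum_{i\le r}m_i$ on both sides.

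Chaining (a)--(c) yields $\bar\alpha\cup\beta\trianglelefteq L(\sigma)\trianglelefteq L(\nu)=\bar\lambda\cup\mu$. If this chain is strict anywhere, then $\bar\alpha\cup\beta\lhd\bar\lambda\cup\mu$, so $(\alpha,\beta)<(\lambda,\mu)$ by \eqref{order-on-pairs}. Otherwise all three terms coincide; from $L(\sigma)=L(\nu)$ componentwise cancellation of $m_i$ gives $\sigma=\nu$, placing $(\alpha,\beta)$ inside $C(\lambda,\mu)$, at which point the already-established Lemma \ref{Lem:maxmalofC} closes the argument. The main obstacle is claim (b), the explicit identification of $\bar\lambda\cup\mu$ as the greedy lift of $\nu$: it requires simultaneously unpacking the interval-based definition of $\gamma_{\lambda,\mu}$, matching the level sets of $M^*$ against the block structure of $\nu$, and verifying that ties between $\tilde\lambda_i$ and the adjacent $\mathbf p_{i-1},\mathbf p_i$ resolve consistently (harmlessly, since the componentwise sum is indifferent to tie-breaking).
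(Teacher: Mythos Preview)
Your proof is correct and takes a genuinely different route from the paper's. The paper argues by induction on the dominance order of $\nu=\tilde\lambda\cup(\mu-\gamma_{\lambda,\mu})$: given a strict inequality $\sigma\lhd\nu$, it inserts an intermediate partition $\theta$ with $\nu=R_{i,j}\theta$, manufactures a pair $(\alpha',\beta_{\alpha'})$ realizing $\theta$ (with the same $\gamma$), verifies $(\alpha',\beta_{\alpha'})<(\lambda,\mu)$ via a four-case analysis depending on which of $i,j$ land in the $\tilde\lambda$-positions of $\nu$, and then invokes the induction hypothesis. Your argument bypasses all of this by packaging the passage from $\sigma$ to $\bar\alpha\cup\beta$ as ``add the fixed multiset $M=\rho_k\cup\gamma_{\lambda,\mu}$ in some assignment'', so that the elementary rearrangement inequality $[u+v]\trianglelefteq[u]+[v]$ gives claim~(a), monotonicity of the componentwise sum gives claim~(c), and the block description \eqref{nu}--\eqref{equ:defofnu} yields claim~(b) since the multiplicity of $k-i$ in $M^*$ is exactly $d_i$ for $i=0$ and $d_i+1$ for $1\le i\le k$, matching the block lengths of $\nu$. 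Both proofs meet at Lemma~\ref{Lem:maxmalofC} in the equality case. Your approach is shorter and more conceptual; the paper's approach, while more laborious, makes explicit which single raising operator separates $\nu$ from $\theta$ and how it propagates to $\bar\lambda\cup\mu$, information that is implicit in your greedy-lift inequality.
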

\begin{proof}
    We proceed by induction on the dominance order of $\tilde\lambda\cup(\mu-\gamma_{\lambda,\mu})$. If $\tilde\lambda\cup(\mu-\gamma_{\lambda,\mu})$ is minimal or $\tilde\alpha\cup[\beta-\gamma_{\lambda,\mu}]=\tilde\lambda\cup(\mu-\gamma_{\lambda,\mu})$, then $ (\alpha,\beta)\in C(\lambda,\mu)$ and the result holds by Lemma \ref{Lem:maxmalofC}.
   
   Now, we assume that $\tilde\alpha\cup[\beta-\gamma_{\lambda,\mu}]\lhd\tilde\lambda\cup(\mu-\gamma_{\lambda,\mu})$.  
    By  \cite[(1.16)]{Mac},  
    there exists some partition  $\theta$  such that
$\tilde\alpha\cup[\beta-\gamma_{\lambda,\mu}]\trianglelefteq\theta
$ and     
     $\tilde\lambda\cup(\mu-\gamma_{\lambda,\mu})=R_{{i,j}}\theta$,
     where $R_{{i,j}}$ is a raising operator defined in \eqref{equ:rasingop}.
    We claim that there exists some $(\alpha',\beta_{\alpha'})\in C_{k,d}$ such that 
    $\theta=\tilde \alpha' \cup [\beta_{\alpha'}-\gamma_{\lambda,\mu}]$, $\gamma_{\alpha',\beta_{\alpha'}}=\gamma_{\lambda,\mu}$ and $(\alpha', \beta_{\alpha'})\le (\lambda, \mu)$. If the claim holds, the induction hypothesis yields $(\alpha,\beta)\le (\alpha',\beta_{\alpha'})$, and the result follows.

    It remains to prove the claim. Write 
    $\theta=(\cdots,\nu_i-1,\cdots,\nu_j+1,\cdots)$ for some $1\le i<j\le k+l(\mu)$. 
  We choose $\alpha'\in\SPar_{a,k}$ such that  $\tilde\alpha'=(\theta_{p_0+1},\theta_{p_1+2},\cdots,\theta_{p_{k-1}+k})$. Let $\beta'= \theta\setminus \tilde\alpha'+\gamma_{\lambda,\mu}$. By \eqref{equ:defofnu}, this ensures $\gamma_{\alpha',\beta'}=\gamma_{\lambda,\mu}$ and $\beta'=\beta_{\alpha'}$.

 Let $S:=\{p_0+1,p_1+2,\cdots,p_z+z+1,\cdots,p_k+k+1\}$. We divide the proof of $(\alpha',\beta_{\alpha'})< (\lambda,\mu)$ into the following four cases.
    \begin{itemize}
        \item[(1)] $i,j\in S$:  Suppose  $i=p_{i'}+i'+1$ and $j=p_{j'}+j'+1$ for some $ 0\le i'<j'\le k-1$. By \eqref{equ:defofnu}, we have 
        $\bar\lambda =R_{i',j'}\bar\alpha'$ and $\beta_{\alpha'}=\mu$.
        Then, $\bar\alpha'\lhd\bar\lambda$, and hence $\bar\alpha'\cup \beta\lhd\bar\lambda\cup \mu$ by \eqref{order-union}, proving $(\alpha',\beta_{\alpha'})<(\lambda,\mu)$. 
    \item[(2)]  $i,j\notin S$: By \eqref{equ:defofnu}, we have $\bar \alpha'=\bar\lambda$ and $\mu=R_{i'',j''}\beta_{\alpha'} $ for some $i''<j''$. This implies  $(\alpha',\beta_{\alpha'})<(\lambda,\mu)$.
        \item[(3)] $i\in S,j\notin S$:  
       Suppose $i=p_{z-1}+z$ and $p_{z'-1}+z'<j<p_{z'}+z'+1$ for some $0\le z\le z'\le k-1$.
       This means $\nu_i=\bar\lambda_z+k-z$
       and $\nu_j=\mu_{j-z'}-k+z'$.
       Then 
       \[\bar\alpha'\cup\beta_{\alpha'}=[\nu_i+k-z-1,\nu_j+k-z'+1]\cup \sigma, \quad \bar\lambda\cup \mu =[\nu_i+k-z, \nu_j+k-z' ]\cup \sigma \]
       where $\sigma$ is the partition consisting of other common numbers.
       Since $\nu_i-1\ge \nu_j+1$, we have 
       $\nu_i+k-z-1\ge \nu_j+k-z+1\ge \nu_j+k-z'+1$
       and hence  $\bar\alpha'\cup\beta_{\alpha'}\lhd\bar\lambda\cup\mu$, yielding $(\alpha',\beta_{\alpha'})<(\lambda,\mu)$.
        \item[(4)]  $i\notin S,j\in S$: Similar argument as case (3) shows that    $\bar\alpha'\cup\beta_{\alpha'}\lhd\bar\lambda\cup\mu$ and hence $(\alpha',\beta_{\alpha'})<(\lambda,\mu)$.
    \end{itemize}
 This completes the proof of the claim and hence the lemma.
\end{proof}

\begin{lemma}
\label{indepedance of k wdots}
For any fixed $k\in \mathbb N$, 
the set $\{g_\lambda(y_{k+1},\cdots,y_a) e_\mu(y_1,\ldots,y_a)\mid \lambda\in \SPar_{a,k},\mu\in \Par_a\}$ is linearly independent in
     $\Z[y_1,\cdots,y_a]$, where 
     $g_\lambda(y_{k+1},\cdots,y_a)$ in \eqref{lead-terms-glambda}. 
\end{lemma}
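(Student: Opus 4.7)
Reducing to a fixed total degree $d = |\bar\lambda|+|\mu|$ (polynomials of distinct degrees are automatically independent), I would prove linear independence of $\{g_\lambda e_\mu : (\lambda,\mu) \in C_{k,d}\}$ by a triangularity argument driven by the partial order \eqref{order-on-pairs}. The key observation is that each $g_\lambda(y_{k+1},\ldots,y_a)\,e_\mu(y_1,\ldots,y_a)$ lies in the subring $R = \Lambda_k \otimes_{\mathbb Z} \Lambda_{a-k}$ of polynomials that are symmetric in $y_1,\ldots,y_k$ and in $y_{k+1},\ldots,y_a$ separately, so I may expand in the $\mathbb Z$-basis $\{e_\zeta(y_1,\ldots,y_k)\,e_\eta(y_{k+1},\ldots,y_a): \zeta_1\le k,\ \eta_1\le a-k\}$ of $R$.

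Applying $e_r(y_1,\ldots,y_a) = \sum_i e_i(y_1,\ldots,y_k)\,e_{r-i}(y_{k+1},\ldots,y_a)$ factorwise gives $e_\mu = \sum_{\alpha \in I_{k,\mu}} e_{[\alpha]}\, e_{[\mu-\alpha]}$, and combining this with $g_\lambda = \sum_{\pi \trianglerighteq \tilde\lambda} d_\pi e_\pi$ (with $d_{\tilde\lambda}=1$, from \eqref{lead-terms-glambda}) yields
\[
g_\lambda e_\mu \;=\; \sum_{\alpha \in I_{k,\mu}}\, \sum_{\pi \trianglerighteq \tilde\lambda}\, d_\pi\, e_{[\alpha]}(y_1,\ldots,y_k)\, e_{\pi \cup [\mu-\alpha]}(y_{k+1},\ldots,y_a).
\]
For each $(\lambda,\mu) \in C_{k,d}$ the pivot I would single out is $P(\lambda,\mu) := e_{\gamma_{\lambda,\mu}}(y_1,\ldots,y_k)\, e_\nu(y_{k+1},\ldots,y_a)$ with $\nu = \tilde\lambda \cup (\mu-\gamma_{\lambda,\mu})$ as in Section~\ref{subse:partiallysym}. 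The plan is then to prove (a) the coefficient of $P(\lambda,\mu)$ in $g_\lambda e_\mu$ is $1$, and (b) if the coefficient of $P(\lambda',\mu')$ in $g_\lambda e_\mu$ is nonzero then $(\lambda,\mu)\le(\lambda',\mu')$. These give an upper-triangular transition matrix with unit diagonal in any linear refinement of $\le$, yielding invertibility and hence the desired linear independence.

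Claim (b) is where Lemmas \ref{lem:oderminus} and \ref{partial-order} combine. Nonvanishing of the coefficient forces an $\alpha \in I_{k,\mu}$ with $[\alpha]=\gamma_{\lambda',\mu'}$ and a $\pi \trianglerighteq \tilde\lambda$ satisfying $\pi \cup [\mu-\alpha] = \nu'$. Applying Lemma \ref{lem:oderminus} to produce $[\mu - \gamma_{\lambda',\mu'}] \trianglelefteq [\mu-\alpha]$, and then using \eqref{order-union} twice, yields the dominance chain
\[
\tilde\lambda \cup [\mu - \gamma_{\lambda',\mu'}] \;\trianglelefteq\; \tilde\lambda \cup [\mu - \alpha] \;\trianglelefteq\; \pi \cup [\mu - \alpha] \;=\; \nu',
\]
which is precisely the hypothesis of Lemma \ref{partial-order} (with $(\lambda,\mu)$ playing the role of the generic pair and $(\lambda',\mu')$ the reference). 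The conclusion of that lemma is exactly $(\lambda,\mu)\le(\lambda',\mu')$. For claim (a), specializing (b)'s chain to $(\lambda',\mu')=(\lambda,\mu)$ under the assumption $\pi \cup [\mu-\alpha] = \nu$ forces equality throughout; multiset cancellation pins down $\pi = \tilde\lambda$ and $[\mu-\alpha] = \mu - \gamma_{\lambda,\mu}$, and the equality case in the proof of Lemma \ref{lem:oderminus} (each nontrivial adjacent transposition strictly increases dominance) forces $\alpha = \gamma_{\lambda,\mu}$ as a tuple, leaving a single contribution of $d_{\tilde\lambda}=1$.

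The main obstacle will be claim (b): the point is to recognize that the dominance chain arising from the expansion of $g_\lambda e_\mu$ is exactly what Lemma \ref{partial-order} is designed to handle, whose combinatorics in turn rest on the connected graph $V_{\lambda,\mu}$ and the explicit structure of $\gamma_{\lambda,\mu}$. The uniqueness analysis in claim (a), while more elementary, still requires extracting the equality conditions from the transposition argument in Lemma \ref{lem:oderminus} and must be carried out carefully.
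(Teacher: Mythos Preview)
Your proposal is correct and follows essentially the same route as the paper: both expand $g_\lambda e_\mu$ via \eqref{er-sum} and \eqref{lead-terms-glambda}, single out the pivot term $e_{\gamma_{\lambda,\mu}}(y_1,\ldots,y_k)\,e'_{\nu}(y_{k+1},\ldots,y_a)$, and invoke Lemmas~\ref{lem:oderminus} and~\ref{partial-order} to obtain triangularity with respect to the order~\eqref{order-on-pairs}---the paper phrases this as an induction on that order, you as an upper-triangular transition matrix, which is equivalent. One small caveat on claim~(a): your appeal to ``the equality case in the proof of Lemma~\ref{lem:oderminus}'' is not quite airtight, since that proof passes through \eqref{order-union}, which need not be strict when $\mu$ has repeated parts; however, the uniqueness of $\alpha=\gamma_{\lambda,\mu}$ follows directly by comparing prefix sums (from $[\alpha]=\gamma_{\lambda,\mu}$ one gets $\sum_{j\le m}\alpha_j\le\sum_{j\le m}(\gamma_{\lambda,\mu})_j$, while $[\mu-\alpha]=\mu-\gamma_{\lambda,\mu}$ forces the reverse), and in any case the paper only needs the diagonal coefficient $a_{\gamma_{\lambda,\mu}}$ to be a positive integer.
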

\begin{proof}By degree considerations, 
it suffices to prove that for fixed $d\in\mathbb{Z}_{\ge 1}$, the set $g_{\lambda} e_\mu, (\lambda,\mu)\in C_{k,d}$ is linearly independent. To do this, 
Suppose that
\begin{equation}
\label{linear-equ}
    \sum_{\lambda,\mu} c_{\lambda,\mu} g_{\lambda} e_\mu=0,
\end{equation}
where the sum runs over all pairs $(\lambda,\mu)\in C_{k,d}$. 
Our goal is to show that $c_{\lambda,\mu}=0$ for all $(\lambda,\mu)$.

Recall $I_{k,\mu}$ in \eqref{equ:defofIkmu}, we have 
   \begin{align*}
    e_\mu(y_1,\cdots,y_a)&\overset{\eqref{er-sum}}=\sum_{i\in I_{\mu,k}}e_{[i]}(y_1,\cdots,y_k)e_{[\mu-i]}(y_{k+1},\cdots,y_a)\\
&=\sum_{\gamma\in\Par}e_\gamma(y_1,\cdots,y_k)\sum_{i\in I_{\mu,k},[i]=\gamma}e_{[\mu-i]}(y_{k+1},\cdots,y_a)\\
&\overset{\text{lem }\ref{lem:oderminus}}{=}\sum_{\gamma\in\Par}e_\gamma(y_1,\cdots,y_k)\big(a_\gamma e_{[\mu-\gamma]}'
+
\sum_{[\mu-i]\rhd[\mu-\gamma],i\in I_{\mu,k},[i]=\gamma}a_{[\mu-i]}e_{[\mu-i]}'\big)
\end{align*}
where $e'$ denotes the elementary symmetric polynomials in $\{y_{k+1},\cdots,y_a\}$, $a_\gamma\in \mathbb{Z}_{>0}$.
Now, for each pair $(\lambda,\mu)$,  we have 
\begin{align*}
    g_{\lambda} e_\mu(y_1,\cdots,y_a)
    &\overset{\eqref{lead-terms-glambda}}{=}\big(e_{\tilde\lambda}(y_{k+1},\cdots,y_a)+\sum_{\beta\rhd\tilde\lambda}d_\beta e_\beta(y_{k+1},\cdots,y_a)\big)e_\mu(y_1,\cdots,y_a)\\
&\overset{\eqref{order-union}}=\sum_{\gamma\in\Par}e_\gamma(y_1,\cdots,y_k)\big(a_\gamma e_{\tilde\lambda\cup[\mu-\gamma]}'
+
\sum_{\theta\rhd\tilde\lambda\cup[\mu-\gamma]}a'_{\theta}e'_\theta \big)
\end{align*}
for some
$d_\beta,a'_\theta\in\mathbb{Z}_{\ge 0}$.
This together with \eqref{linear-equ} and 
the linear independence of $e_\gamma(y_1,\cdots,y_k)$ for distinct $\gamma\in\Par$ implies that for each $\gamma\in \Par$,    
\begin{equation}
\label{sums=0_1}
\sum_{\lambda,\mu}c_{\lambda,\mu}\big(a_\gamma e_{\tilde\lambda\cup[\mu-\gamma]}'
+
\sum_{\theta\rhd\tilde\lambda\cup[\mu-\gamma]}a'_{\theta}e'_\theta \big)=0.
\end{equation}
Now we show that  $c_{\lambda,\mu}=0$ by induction on the order \eqref{order-on-pairs} of $(\lambda,\mu)$. If $(\lambda,\mu)$ is  minimal in this order, then by Lemma \ref{partial-order}, there is no other $(\alpha,\beta)\in C_{k,d}$ such that $\tilde\alpha\cup[\beta-\gamma_{\lambda,\mu}]\trianglelefteq\tilde\lambda\cup[\mu-\gamma_{\lambda,\mu}]$. In this case, we choose $\gamma=\gamma_{\lambda,\mu}$ in \eqref{sums=0_1}, which forces   $c_{\lambda,\mu}=0$.

If $(\lambda,\mu)$ is not minimal, we also choose 
$\gamma=\gamma_{\lambda,\mu}$ in \eqref{sums=0_1}, yielding 
\begin{equation}
\label{sums=0_2}
\sum_{\alpha,\beta}c_{\alpha,\beta}\big(
\sum_{\theta(\alpha,\beta)\trianglerighteq\tilde\alpha\cup[\beta-\gamma_{\lambda,\mu}]}d_{\theta(\alpha,\beta)}e'_{\theta(\alpha,\beta)} \big)=0,
\end{equation}
where $d_{\theta(\alpha,\beta)}\in \mathbb {Z}_{\ge 0}$.  
If $\theta(\alpha,\beta)=\tilde\lambda\cup[\mu-\gamma_{\lambda,\mu}]$, then Lemma \ref{partial-order} implies $(\alpha,\beta)\le (\lambda,\mu)$.
By the induction hypothesis,   $c_{\alpha,\beta}=0$ for $(\alpha,\beta)<(\lambda,\mu)$.  Consequently, the  coefficient of the term $e'_{\tilde\lambda\cup[\mu-\gamma_{\lambda,\mu}]}$ appears in \eqref{sums=0_2} is $c_{\lambda,\mu}a_{\gamma_{\lambda,\mu}}$, forcing   $c_{\lambda,\mu}=0$.
This completes the proof.
\end{proof}
 
\bibliographystyle{alpha}
\bibliography{affineQweb}
\end{document}